\documentclass[11pt]{article}
\usepackage[a4paper, margin=0.9in]{geometry}
\usepackage{float}
\usepackage{amsmath}
\usepackage{amssymb}
\usepackage{amsfonts}
\usepackage{amsthm}
\usepackage{amsopn}
\usepackage{graphicx}
\usepackage{epstopdf}
\usepackage{setspace}
\usepackage{xcolor}
\usepackage{caption}
\usepackage{subcaption}
\usepackage{comment}
\usepackage{booktabs}
\usepackage{multirow}
\usepackage{stmaryrd}
\usepackage{algorithm}
\usepackage[noEnd=true,indLines=true,italicComments=false]{algpseudocodex}
\usepackage[american]{babel}
\usepackage{csquotes}
\usepackage{chngcntr}
\usepackage{hyperref}
\usepackage{cleveref}
\hypersetup{
  colorlinks=true,
  linkcolor=blue,
  citecolor=blue,
  urlcolor=blue,
  pdftitle={A Semismooth Newton Augmented Lagrangian Method for Sparse Spectral Risk Optimization},
  pdfauthor={Rufeng Xiao, Rujun Jiang, Xudong Li, and Defeng Sun},
  pdfkeywords={spectral risk measure, semismooth Newton method, pool adjacent violators algorithm, augmented Lagrangian method}
}

\DeclareGraphicsExtensions{.pdf,.png,.jpg,.eps}

\counterwithout{figure}{section}
\counterwithout{table}{section}

\let\STATE\State
\let\IF\If
\let\ENDIF\EndIf
\let\WHILE\While
\let\ENDWHILE\EndWhile
\let\FOR\For
\let\ENDFOR\EndFor

\newtheorem{theorem}{Theorem}[section]
\newtheorem{proposition}[theorem]{Proposition}
\newtheorem{lemma}[theorem]{Lemma}
\newtheorem{assumption}{Assumption}
\newtheorem{definition}[theorem]{Definition}

\newtheorem{remark}[theorem]{Remark}

\newenvironment{keywords}{\par\smallskip\noindent\textbf{Keywords.}\ }{\par\smallskip}
\newenvironment{MSCcodes}{\par\smallskip\noindent\textbf{MSC codes.}\ }{\par\smallskip}

\crefname{assumption}{Assumption}{Assumptions}
\crefname{claim}{Claim}{Claims}
\crefname{fact}{Fact}{Facts}

\newcommand{\dist}{\operatorname{dist}}
\newcommand{\bs}{\boldsymbol}

\newcommand{\bb}{\mathbb}

\newcommand{\prox}{\operatorname{prox}}

\def\R{{\mathbb{R}}}

\def\ba{{\boldsymbol a}}
\def\bb{{\boldsymbol b}}
\def\bc{{\boldsymbol c}}
\def\bd{{\boldsymbol d}}

\def\br{{\boldsymbol r}}

\def\bu{{\boldsymbol u}}
\def\bv{{\boldsymbol v}}
\def\bw{{\boldsymbol w}}
\def\bx{{\boldsymbol x}}
\def\by{{\boldsymbol y}}
\def\bz{{\boldsymbol z}}

\def\blambda{{\boldsymbol \lambda}}

\title{A Semismooth Newton Augmented Lagrangian Method for Sparse Spectral Risk Optimization}
\author{Rufeng Xiao\thanks{School of Data Science, Fudan University, Shanghai, China. Email: \href{mailto:rfxiao24@m.fudan.edu.cn}{rfxiao24@m.fudan.edu.cn}.}
\and Rujun Jiang\thanks{School of Data Science, Fudan University, Shanghai, China. Email: \href{mailto:rjjiang@fudan.edu.cn}{rjjiang@fudan.edu.cn}.}
\and Xudong Li\thanks{School of Data Science, Fudan University, Shanghai, China. Email: \href{mailto:lixudong@fudan.edu.cn}{lixudong@fudan.edu.cn}.}
\and Defeng Sun\thanks{Department of Applied Mathematics, The Hong Kong Polytechnic University, Hung Hom, Kowloon, Hong Kong. Email: \href{mailto:defeng.sun@polyu.edu.hk}{defeng.sun@polyu.edu.hk}.}}
\date{}
\begin{document}

\maketitle

\begin{abstract}
Empirical risk minimization is a standard and effective paradigm for learning predictive models by minimizing average loss. In high-stakes decision-making, however, an average-loss criterion may underrepresent rare but severe losses. Spectral risk measures (SRMs) provide a principled framework by incorporating weighted order statistics of losses, but the induced nonsmoothness and nonseparability from sorting make the resulting optimization problems challenging. We propose a relative inexact proximal augmented Lagrangian method with a semismooth Newton subproblem solver for solving SRM-based optimization problems. Exploiting a dual reformulation and properties of the Moreau envelope, we reduce the subproblems to structured dual-variable formulations, significantly simplifying computation. We provide explicit generalized Jacobian characterizations and tailor the pool adjacent violators algorithm for their efficient evaluation. Numerical results on synthetic and real-data instances show that the proposed method attains lower running times than the tested ADMM baseline while producing comparable stationarity residuals and sparse solutions.
\end{abstract}

\begin{keywords}
Spectral risk measure; semismooth Newton method; pool adjacent violators algorithm; augmented Lagrangian method
\end{keywords}

\begin{MSCcodes}
90C25, 90C06, 90C53, 65K05
\end{MSCcodes}

\section{Introduction}
Empirical risk minimization (ERM) optimizes average predictive performance but often struggles with robustness in high-stakes environments, where extreme, infrequent losses dictate system reliability. To incorporate risk sensitivity and fairness, alternative formulations penalizing tail behavior have gained traction \cite{acerbi2002portfolio,acerbi2002spectral,chow2015risk, dwork2012fairness}.

Spectral risk measures (SRMs) provide a unifying framework for this purpose \cite{acerbi2002spectral, maurer2021robust}. An SRM aggregates individual losses using nonnegative weights on their order statistics, enabling a controlled emphasis on extreme outcomes.
By calibrating these weights, SRMs bridge classical ERM, worst-case risk, and conditional value-at-risk (CVaR)~\cite{rockafellar2000optimization}, while subsuming criteria like extremiles \cite{daouia2019extremiles}. Specifically, our numerical experiments focus on three representative SRMs with weight coefficients defined respectively by
\begin{align}
    \text{$\nu$-Superquantile:} \quad & \sigma_i = \frac{1}{n\nu} \mathbb{I}_{\{i > k\}} + \left(1 - \frac{ \lfloor n\nu \rfloor}{n\nu}\right) \mathbb{I}_{\{i = k\}}, \quad && \text{for } \nu \in (0,1), \label{eq:srm_cvar} \\
    \text{$\varrho$-ESRM:} \quad & \sigma_i = \frac{e^{-\varrho}\bigl(e^{\varrho i/n} - e^{\varrho (i-1)/n}\bigr)}{1 - e^{-\varrho}}, \quad && \text{for } \varrho > 0, \label{eq:srm_esrm} \\
    \text{$r$-Extremile:} \quad & \sigma_i = \left( \frac{i}{n} \right)^r - \left( \frac{i-1}{n} \right)^r, \quad && \text{for } r \geq 1, \label{eq:srm_extremile}
\end{align}
where $k = \lceil n(1-\nu) \rceil$, $\mathbb{I}_{\{\cdot\}}$ is the indicator function, and spectral weights satisfy $0\le
\sigma_1\le\cdots\le\sigma_n$ with $\sum_{i=1}^n\sigma_i=1$.

Motivated by this unifying perspective, we formulate the sparsity-promoting spectral risk optimization problem as
\begin{equation}
\label{eq:primal-problem}
\min_{\boldsymbol{w} \in \mathbb{R}^{d}}
\mathcal{F}(\bw) \triangleq  F(\boldsymbol{w}) + \lambda \|\boldsymbol{w}\|_1,
\qquad
F(\boldsymbol{w}) \triangleq  \sum_{i=1}^{n} \sigma_i L_{[i]}(\boldsymbol{w}),
\end{equation}
where $\lambda > 0$ is a regularization parameter, $\sigma_i \in [0,1]$ are the spectral weights, and $[i]$ denotes the index of the $i$-th smallest component of the loss vector $L(\boldsymbol{w})$. To characterize the loss structure, we express the loss vector as the composite mapping $L(\boldsymbol{w}) = \ell(D\boldsymbol{w} + \boldsymbol{c})$, where $D \in \mathbb{R}^{n \times d}$ and $\boldsymbol{c} \in \mathbb{R}^n$ are the given data parameters. Here, the vector-valued mapping
\begin{equation}
\label{eq:individual-loss-def}
\ell(\boldsymbol{z})
\triangleq
\bigl[l(z_1), \ldots, l(z_n)\bigr]^\top,
\end{equation}
applies the individual loss function $l:\mathbb{R}\to\mathbb{R}_+$ componentwise. Crucially, the affine mapping $D\boldsymbol{w}+\boldsymbol{c}$ characterizes the vector of prediction scores (or margins), placing regression and classification within the same composite optimization model. From an application standpoint, this objective systematically upweights high-loss samples, providing a robust mechanism for safety- and fairness-sensitive learning \cite{dwork2012fairness}.

Solving \eqref{eq:primal-problem} is algorithmically challenging because the ranking operation induces nonsmoothness and nonseparability. Subgradient methods have theoretical convergence guarantees but are often too slow for large-scale instances. Existing deterministic approaches mainly exploit tractable reformulations in special cases, especially CVaR \cite{rockafellar2000optimization}. For example, \cite{wu2023convex} developed an augmented Lagrangian method (ALM) with a semismooth Newton (\textnormal{\textsc{Ssn}}) solver for absolute-value losses by reducing the problem to Ky-Fan $k$-norm minimization. For broader spectral risk objectives, ADMM-type methods have also been studied, including an ADMM algorithm for cumulative prospect theory that is adaptable to spectral objectives \cite{cui2025decision} and a generalized ADMM framework based on a modified Pool Adjacent Violators Algorithm (PAVA) \cite{xiao2023unified,best2000minimizing}. On the stochastic side, variance-reduced smoothing methods typically solve only smoothed approximations of \eqref{eq:primal-problem} \cite{mehta2023stochastic}. The recent SOREL method provides the first stochastic algorithm with exact convergence guarantees, but it requires a strongly convex regularizer \cite{ge2025sorel}.

To efficiently solve the sparsity-promoting problem \eqref{eq:primal-problem}, we propose a relative inexact proximal augmented Lagrangian method (\textnormal{\textsc{ripALM}}) equipped with an \textnormal{\textsc{Ssn}} solver, referred to as the \textnormal{\textsc{ripALM}}-\textnormal{\textsc{Ssn}} method.
Specifically, we construct a structured augmented Lagrangian based on the dual formulation. By using the Moreau envelope representation, we formulate the augmented Lagrangian subproblems in the dual variables only, while isolating the nonsmooth spectral-risk structure in the associated proximal mapping.
Furthermore, to efficiently evaluate the proximal operator associated with the spectral risk term, we develop a tailored, structure-exploiting implementation of PAVA. This refinement reduces the number of one-dimensional subproblem evaluations compared to classical schemes \cite{xiao2023unified,cui2025decision}.
Unlike settings where the proximal mapping admits a closed-form formula \cite{wu2023convex}, the mapping considered here is implicitly defined through PAVA. We derive structural and perturbation estimates that characterize generalized Jacobians of this mapping for the hinge loss and for a broad class of SC$^1$ loss functions. These characterizations supply the derivative information used in our \textnormal{\textsc{Ssn}} method.
Finally, numerical experiments on synthetic and real-data instances show lower running times than the tested ADMM baseline while producing sparse solutions and comparable stationarity residuals.

\textbf{Notation.} We use bold lowercase letters for vectors and regular letters for scalars. Let $\bs{1}_n, \bs{0}_n \in \mathbb{R}^n$ denote the all-ones and all-zeros vectors (subscripts omitted when clear), $I_n$ the $n$-dimensional identity matrix, and $\bs{e}_i$ the $i$-th standard basis vector. The standard inner product and Euclidean norm are denoted by $\langle\cdot,\cdot\rangle$ and $\|\cdot\|$, respectively. For a proper closed convex function $\psi: \R^q \to (-\infty, +\infty]$, its subdifferential at $\bx \in \operatorname{dom}\psi$ is $\partial \psi(\bx) \triangleq  \{\bd \in \R^q \mid \psi(\by) \geq \psi(\bx) + \langle\bd, \by-\bx\rangle, \forall \by\}$, and its conjugate is $\psi^*(\by) \triangleq  \sup_{\bx} \{\langle \by, \bx\rangle - \psi(\bx)\}$. For a vector $\bx$, $x_{[i]}$ denotes its $i$-th smallest element, and $\vec{\bx}\triangleq [x_{[1]},\dots,x_{[n]}]^\top$ denotes its version sorted in nondecreasing order. For integers \(i_1\) and \(i_2\), let \(\llbracket i_1,i_2\rrbracket\triangleq\{i_1,i_1+1,\ldots,i_2\}\) if \(i_1\le i_2\), and let \(\llbracket i_1,i_2\rrbracket\triangleq\emptyset\) otherwise. The notation \(\bx_{\llbracket i_1,i_2\rrbracket}\triangleq [x_{i_1},\dots,x_{i_2}]^\top\) extracts the corresponding subvector when \(i_1\le i_2\), and denotes the empty vector otherwise. The support of a nonnegative vector $\bd$ is $\operatorname{supp}(\bd) \triangleq  \{i \mid d_i > 0\}$. Finally, $\operatorname{BlkD}(A_1,\dots,A_m)$ forms a block diagonal matrix from given blocks, and $\nabla\ell(\bz) \triangleq  [l'(z_1),\dots,l'(z_n)]^\top$ represents the componentwise derivative.

\section{\texorpdfstring{Preliminaries and the \textnormal{\textsc{ripALM}} Scheme}{Preliminaries and the ripALM Scheme}}
\label{sec:ripalm-framework}
This section develops the dual augmented Lagrangian formulation underlying the proposed \textnormal{\textsc{ripALM}} scheme. After fixing the Moreau-envelope and proximal notation, we derive the dual reformulation of problem~\eqref{eq:primal-problem}, reduce the augmented-Lagrangian subproblem, and state the resulting relative inexact proximal augmented Lagrangian method.

We begin with the Moreau-envelope and proximal notation used throughout.
Let \(\psi:\mathbb{R}^n\to(-\infty,+\infty]\) be proper, closed, and convex, and let \(\rho>0\). We define its Moreau envelope and, under the scaled-function convention, its proximal mapping by

\[
M_{\psi}^{\rho}(\bx)
\triangleq \min_{\by\in\mathbb{R}^n}\left\{\psi(\by)+\frac{\rho}{2}\|\bx-\by\|_2^2\right\},
\;
\prox_{\rho\psi}(\bx)\triangleq
\arg\min_{\by\in\mathbb{R}^n}\left\{\rho\psi(\by)+\frac{1}{2}\|\bx-\by\|_2^2\right\}.
\]

This convention is used below for \(\prox_{\rho f}\) and \(\prox_{\rho g}\).
The following standard Moreau identities, stated for the same parameterization, will be used repeatedly~\cite[Theorems 6.45, 6.60, 6.67]{beck2017first}.

\begin{lemma}
\label{lem:moreau-lemma}
For any $\bx\in\mathbb{R}^n$ and $\rho>0$, we have
\[
\bx = \prox_{\rho\psi}(\bx)+\rho\prox_{\psi^*/\rho}(\bx/\rho),
\quad \text{and} \quad
M_{\psi}^{1/\rho}(\bx)+M_{\psi^*}^{\rho}(\bx/\rho)=\frac{1}{2\rho}\|\bx\|_2^2.
\]
Moreover, \(M_\psi^{1/\rho}\) is continuously differentiable on \(\mathbb{R}^n\) with gradient
\(\nabla M_\psi^{1/\rho}(\bx)=\rho^{-1}\bigl(\bx-\prox_{\rho\psi}(\bx)\bigr)\).
\end{lemma}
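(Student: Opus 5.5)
The plan is to work directly from the definitions with the given parameterization; the substance is keeping track of the scalings. Write \(\bp\!\!\) — rather, write \(\by^\star\triangleq\prox_{\rho\psi}(\bx)\), which exists and is unique because \(\rho\psi+\frac12\|\cdot-\bx\|^2\) is proper, closed and strongly convex. Its optimality condition is \(\bx-\by^\star\in\rho\,\partial\psi(\by^\star)\). Set \(\bu\triangleq(\bx-\by^\star)/\rho\), so that \(\bu\in\partial\psi(\by^\star)\).

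\textbf{The prox identity.} By Fenchel--Young inversion for closed convex \(\psi\), \(\bu\in\partial\psi(\by^\star)\) is equivalent to \(\by^\star\in\partial\psi^*(\bu)\). Substituting \(\by^\star=\bx-\rho\bu\) gives
\[
\bx/\rho-\bu\in\rho^{-1}\partial\psi^*(\bu).
\]
This is exactly the optimality condition of \(\min_{\bv}\{\rho^{-1}\psi^*(\bv)+\frac12\|\bx/\rho-\bv\|^2\}\). Since \(\psi^*\) is also proper, closed and convex, that minimizer is unique, so \(\bu=\prox_{\psi^*/\rho}(\bx/\rho)\). Rearranging \(\bu=(\bx-\by^\star)/\rho\) yields \(\bx=\prox_{\rho\psi}(\bx)+\rho\prox_{\psi^*/\rho}(\bx/\rho)\).

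\textbf{The envelope identity.} By the paper's convention,
\[
M_\psi^{1/\rho}(\bx)=\psi(\by^\star)+\tfrac{1}{2\rho}\|\bx-\by^\star\|^2=\psi(\by^\star)+\tfrac{\rho}{2}\|\bu\|^2 .
\]
The minimizer defining \(M_{\psi^*}^{\rho}(\bx/\rho)=\min_{\bv}\{\psi^*(\bv)+\frac{\rho}{2}\|\bx/\rho-\bv\|^2\}\) is \(\prox_{\psi^*/\rho}(\bx/\rho)=\bu\). Hence
\[
M_{\psi^*}^{\rho}(\bx/\rho)=\psi^*(\bu)+\tfrac{\rho}{2}\|\bx/\rho-\bu\|^2=\psi^*(\bu)+\tfrac{1}{2\rho}\|\by^\star\|^2 .
\]
Adding the two and using the Fenchel--Young equality \(\psi(\by^\star)+\psi^*(\bu)=\langle\bu,\by^\star\rangle\), valid since \(\bu\in\partial\psi(\by^\star)\), gives
\[
\langle\bu,\by^\star\rangle+\tfrac{\rho}{2}\|\bu\|^2+\tfrac{1}{2\rho}\|\by^\star\|^2=\tfrac{1}{2\rho}\|\by^\star+\rho\bu\|^2=\tfrac{1}{2\rho}\|\bx\|^2 .
\]

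\textbf{Differentiability.} Write \(M_\psi^{1/\rho}(\bx)=\frac{1}{2\rho}\|\bx\|^2-\rho^{-1}\sup_{\by}\{\langle\bx,\by\rangle-\rho\psi(\by)-\frac12\|\by\|^2\}\). The supremum is the conjugate of the \(1\)-strongly convex function \(\rho\psi+\frac12\|\cdot\|^2\), so it is differentiable with gradient equal to the unique maximizer, which is \(\by^\star=\prox_{\rho\psi}(\bx)\). Therefore \(\nabla M_\psi^{1/\rho}(\bx)=\rho^{-1}(\bx-\prox_{\rho\psi}(\bx))\). Continuity of this gradient follows from the firm nonexpansiveness, hence Lipschitz continuity, of \(\prox_{\rho\psi}\). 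That property comes from monotonicity of \(\partial\psi\): subtracting the optimality conditions at two points \(\bx_1,\bx_2\) gives \(\|\by_1-\by_2\|^2\le\langle\bx_1-\bx_2,\by_1-\by_2\rangle\).

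\textbf{Main obstacle.} The only delicate point is the scaling conventions. The paper's \(M^{\rho}\) carries the factor \(\rho/2\), while its prox is parameterized by \(\rho\psi\), so I would double-check each substitution, especially that the minimizer for \(M^{\rho}_{\psi^*}\) is \(\prox_{\psi^*/\rho}\). Everything else is standard and matches the cited results in Beck's book.
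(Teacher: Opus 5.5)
Your proposal is correct, and all the scalings check out; in particular, the minimizer defining $M_{\psi^*}^{\rho}(\bx/\rho)$ is indeed $\prox_{\psi^*/\rho}(\bx/\rho)$. The paper gives no argument of its own and instead cites Beck's Theorems 6.45, 6.60 and 6.67; your subgradient-inversion, Fenchel--Young equality, and conjugate-of-a-strongly-convex-function arguments are the standard proofs behind those results, written out self-containedly.
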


Throughout the rest of the paper, we let $g(\bw) \triangleq  \lambda\|\bw\|_1$ for simplicity.
By introducing an auxiliary variable $\bz = D\bw + \bc$, we rewrite \eqref{eq:primal-problem} as
\begin{equation}
\label{problem:constrained-problem}
\min_{\bw \in \R^d, \bz \in \R^n} \ f(\bz) + g(\bw) \qquad \mathrm{s.t.} \quad D\bw - \bz + \bc = \bs{0},
\end{equation}
where \(f(\bz)\triangleq \sum_{i=1}^n\sigma_i(\ell(\bz))_{[i]}\).
The corresponding Lagrangian is $\mathcal{L}(\bw,\bz,\bu) = f(\bz) + g(\bw) + \langle \bu, D\bw - \bz + \bc \rangle$, yielding the dual problem
\begin{equation}
\label{problem:pure-dual-problem}
\min_{\bu \in \R^n} \ f^*(\bu) + g^*(-D^\top \bu) - \langle \bu, \bc \rangle.
\end{equation}

To proceed, we first formalize the basic assumption on the individual loss.
\begin{assumption}\label{assp:l-nondecreasing}
The individual loss function $l: \R \to \R^+$ appearing in \eqref{eq:individual-loss-def} is convex and nondecreasing.
\end{assumption}
Assumption~\ref{assp:l-nondecreasing} covers a wide range of standard loss functions in machine learning, including the logistic, hinge, exponential, and the smoothed hinge loss defined as $l(z) = 0$ for $z \le -1$, $\frac{1}{2}(1+z)^2$ for $z \in (-1, 0]$, and $\frac{1}{2}+z$ for $z > 0$. Note that under Assumption~\ref{assp:l-nondecreasing}, since $l$ is nondecreasing, sorting $\ell(\bz)$ is equivalent to sorting $\bz$, which means that $f(\bz)$ has the form $\sum_{i=1}^n\sigma_i l(z_{[i]})$.

Under Assumption~\ref{assp:l-nondecreasing}, the functions $f$ and $g$ are proper, closed, and convex with full domains. Hence the relative-interior condition in \cite[Corollary~31.2.1]{rockafellar1970} holds, which yields zero duality gap and dual attainment.
Moreover, since $f\ge0$ and $\lambda>0$, we have
$\mathcal{F}(\bw)\ge \lambda\|\bw\|_1$, so $\mathcal{F}$ is level-bounded and hence attains its finite optimum \cite[Theorem~1.9]{rockafellar2009variational}.

By introducing auxiliary variables $\bs{\xi}$ and $\bs{\zeta}$, we obtain the following reformulation of \eqref{problem:pure-dual-problem},
\begin{equation}
\label{eq:dual-problem}
\begin{aligned}
    \min_{\bu,\bs{\xi},\bs{\zeta}} & \quad  f^*(\bs{\zeta}) + g^*(\bs{\xi}) - \left<\bu,\bc\right>  \qquad \mathrm{s.t.}  -D^\top \bu = \bs{\xi}, \bu = \bs{\zeta}.
\end{aligned}
\end{equation}
The augmented Lagrangian minimization with penalty parameter $\rho > 0$ can be evaluated by first minimizing over $\bs{\xi}$ and $\bs{\zeta}$. This operation explicitly invokes the definition of the Moreau envelope, yielding
\begin{align*}
&\min_{\bs{\xi}, \bs{\zeta}, \bu} L_\rho(\bs{\xi}, \bs{\zeta}, \bu; \bw, \bz) \\
= &~\min_{\bu} \left\{ -\langle \bu, \bc \rangle + M_{g^*}^\rho\left(\frac{\bw}{\rho} - D^\top \bu\right) + M_{f^*}^\rho\left(\bu + \frac{\bz}{\rho}\right) - \frac{\|\bw\|^2 + \|\bz\|^2}{2\rho} \right\}.
\end{align*}
Applying the Moreau identity (Lemma~\ref{lem:moreau-lemma}) to express $M_{g^*}^\rho$ and $M_{f^*}^\rho$ in terms of the primal functions $g$ and $f$, the joint minimization strictly reduces to the following unconstrained $\bu$-subproblem.
\begin{equation}
\label{eq:alm-subproblem}
\begin{aligned}
\min_{\bu}\ \bar{L}_\rho(\bu;\bw,\bz) \triangleq  &-\left\langle \bu, \bc \right\rangle - M_f^{1/\rho}(\rho\bu + \bz) - M_g^{1/\rho}(\bw - \rho D^\top \bu)\\
& + \frac{\|\bw - \rho D^\top \bu\|^2 + \|\rho\bu + \bz\|^2 - \|\bw\|^2 - \|\bz\|^2}{2\rho}.
\end{aligned}
\end{equation}

We apply \textnormal{\textsc{ripALM}}~\cite{zhu2025ripalm} to \eqref{eq:dual-problem}, using the reduced subproblem~\eqref{eq:alm-subproblem}. The resulting scheme, summarized in Algorithm~\ref{alg:rALM}, solves the original dual problem~\eqref{problem:pure-dual-problem}. Its global and local convergence guarantees rely on the standard KKT error bound condition stated below \cite{li2020asymptotically, yang2024corrected}.

\begin{algorithm}[ht!]
\caption{\textnormal{\textsc{ripALM}} for solving reformulation~\eqref{eq:dual-problem}}
\label{alg:rALM}
\begin{algorithmic}[1]

\Statex \textbf{Input:} $\tau \in [0, 1),\bu_0,\bz_0,\bx_0\in \R^n, D\in \R^{n \times d}, \bw_0 \in \R^d$, a positive nonincreasing sequence $\{\beta_k\}$ such that $\sup_{k\geq0} \{\beta_k\} < \infty$, and a positive nondecreasing sequence $\{\rho_k\}$
    \STATE $k=0$
    \WHILE{a termination criterion is not met}
        \STATE Inexactly solve the $\bu$-subproblem
        \begin{equation}
        \label{problem:inexact-u-subproblem}
        \min_{\bu}\left\{
        \varphi_k(\bu)
        \triangleq  \bar{L}_{\rho_k}(\bu;\bw_k,\bz_k)
        + \frac{\beta_k}{2\rho_k}\|\bu-\bu_k\|_2^2
        \right\},
        \end{equation}
        to obtain $\bu_{k+1}$ such that
        \[
        \begin{aligned}
            &2|\left<\bx_k - \bu_{k+1}, \rho_k \nabla \varphi_{k} (\bu_{k+1}) \right>| + \|\rho_k \nabla\varphi_{k} (\bu_{k+1}) \|^2 \\
            \leq & \tau \left(\|\prox_{\rho_k f} (\rho_k \bu_{k+1} + \bz_k) - \bz_k\|^2 + \|\prox_{\rho_k g} (\bw_k - \rho_k D^\top \bu_{k+1}) - \bw_k\|^2 \right.\\
            &\left. + \beta_k \|\bu_{k+1}-\bu_{k}\|^2 \right)
        \end{aligned}
        \]
        \STATE Update $\bw_{k+1}=\operatorname{prox}_{\rho_k g}\left(\bw_{k}-\rho_k D^\top\bu_{k+1}\right)$, $\bz_{k+1}= \operatorname{prox}_{\rho_k f}\left(\bz_k + \rho_k \bu_{k+1}\right)$, and $\bx_{k+1}=\bx_{k}-\rho_k \nabla \varphi_k (\bu_{k+1})$
        \STATE $k = k + 1$
    \ENDWHILE
\end{algorithmic}
\end{algorithm}

\begin{assumption}\label{assp:error-bound}
There exist constants $\kappa, r > 0$ such that for any $(\bw,\bz,\bu)$ satisfying
\[
\dist\big((\bw,\bz,\bu), (\partial \mathcal{L})^{-1}(\bs{0})\big) \le r,
\]
it holds that
\[
\dist\big((\bw,\bz,\bu), (\partial \mathcal{L})^{-1}(\bs{0})\big) \le \kappa \dist\big(\bs{0}, \partial \mathcal{L}(\bw,\bz,\bu)\big).
\]
\end{assumption}
\noindent Under Assumptions~\ref{assp:l-nondecreasing} and \ref{assp:error-bound}, Algorithm~\ref{alg:rALM} globally converges to an optimal primal-dual solution pair and exhibits a local linear convergence rate. We refer the reader to \cite[Theorems 3.1 and 3.2]{zhu2025ripalm} for precise statements regarding the parameter thresholds and asymptotic convergence factors. Notably, if the individual loss is the hinge or smoothed hinge loss, then \(f\) is piecewise linear--quadratic. Indeed, on each order cone \(z_{\pi_1}\le\cdots\le z_{\pi_n}\), the function \(f\) reduces to \(\sum_{i=1}^n\sigma_i l(z_{\pi_i})\). For the smoothed hinge loss, further partitioning by the finitely many scalar breakpoint regions of \(l\) gives a finite polyhedral partition on each element of which \(f\) is quadratic, while for the hinge loss it is piecewise linear. Moreover, \(g(\bw)=\lambda\|\bw\|_1\) is piecewise linear and hence piecewise linear--quadratic. Therefore, the corresponding Lagrangian of problem~\eqref{problem:constrained-problem} is piecewise linear--quadratic, and the error bound condition in Assumption~\ref{assp:error-bound} is inherently satisfied in this case~\cite{robinson1981some}.

To set up the \textnormal{\textsc{Ssn}} method for the inner \(\bu\)-subproblem~\eqref{problem:inexact-u-subproblem}, we first record the gradient of its objective. By Lemma~\ref{lem:moreau-lemma}, \(\varphi_k\) is differentiable and
\begin{equation}
\label{eq:nabla-phik}
\begin{aligned}&\nabla \varphi_k (\bu)\\
 =& -\left(
\rho_k \bu + \bz_k - \prox_{\rho_k f}(\rho_k \bu + \bz_k)
\right) + D\left(\rho_k D^\top\bu - \bw_k\right) - \bc\\
&\quad + D\left(\bw_k-\rho_k D^\top\bu-\prox_{\rho_k g}(\bw_k -\rho_k D^\top \bu)\right)
+ \rho_k \bu + \bz_k + \tfrac{\beta_k}{\rho_k}(\bu-\bu_k)\\
=&  \prox_{\rho_k f}(\rho_k \bu + \bz_k) - D\prox_{\rho_k g}(\bw_k -\rho_k D^\top \bu)+ \tfrac{\beta_k}{\rho_k}(\bu-\bu_k) - \bc.
\end{aligned}
\end{equation}
The representation \eqref{eq:nabla-phik} identifies the computational path followed below. Section~\ref{sec:prox-pava} develops an efficient PAVA-based evaluation of \(\prox_{\rho f}\). Section~\ref{sec:hs-jacobian} characterizes generalized derivatives of this mapping. Section~\ref{sec:ssn-linear-algebra} then uses these ingredients to construct an \textnormal{\textsc{Ssn}} solver and compute its Newton directions efficiently.

\section{Proximal Mapping and a Structure-Exploiting PAVA}
\label{sec:prox-pava}
This section focuses on computing the SRM proximal mapping \(\operatorname{prox}_{\rho f}\) appearing in \eqref{eq:nabla-phik}. We first reduce its evaluation to an ordered convex subproblem and then solve this subproblem by a structure-exploiting PAVA.

\subsection{Reduction to an Ordered Proximal Subproblem}
We first derive a computable representation of $\operatorname{prox}_{\rho f}(\cdot)$.
For a vector $\bb$, denote the permutation matrix set by
\begin{equation}
\label{eq:perm-mat}
\mathcal{P}(\bb) = \{P \in \R^{n\times n}|(P\bb)_i = b_{[i]}, i=1,\dots,n\},
\end{equation}
where $b_{[1]}\leq \dots \leq b_{[n]}$.
Thus, for any $P \in \mathcal{P}(\bb)$, we have $P\bb=\vec{\bb}$ and $P^{-1}\vec{\bb} = P^\top\vec{\bb} =\bb$.
In this subsection, \(\by\) denotes the ordered data vector in the PAVA subproblem. When evaluating \(\prox_{\rho f}(\bb)\), we take \(\by=\vec{\bb}\).
We consider the following problem
\begin{equation}
\label{problem:pav-problem}
\min_\bz\quad \sum_{i=1}^n \left(\rho\sigma_i l(z_i)+\frac{1}{2}( z_i-y_i )^2\right) 
\qquad \mathrm{s.t.} \quad  z_1\leq z_2\leq \cdots \leq z_n.
\end{equation}

The next lemma shows that, for any $\bb$, evaluating $\prox_{\rho f}(\bb)$ reduces to solving problem~\eqref{problem:pav-problem} with $\by=\vec{\bb}$.
\begin{lemma}
\label{lem:prox-permutation}
    Suppose Assumption~\ref{assp:l-nondecreasing} holds. Given a vector $\by$, let $\bz^*(\by)$ denote the unique optimal solution to \eqref{problem:pav-problem}. Then, for any $P \in \mathcal{P}(\bb)$ defined in \eqref{eq:perm-mat}, we have $\prox_{\rho f}(\bb) = P^\top \bz^*(P\bb) = P^\top \bz^*(\vec{\bb})$.
\end{lemma}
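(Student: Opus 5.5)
Fix \(\bb\) and \(P\in\mathcal{P}(\bb)\), and write \(\by=P\bb=\vec{\bb}\). The plan is to show that the prox problem
\[
\min_{\bx}\ \rho f(\bx)+\tfrac12\|\bx-\bb\|^2
\]
has a minimizer whose sorting permutation agrees with that of \(\bb\). On such points \(f\) becomes separable and coincides with the objective of \eqref{problem:pav-problem}. First, existence and uniqueness: \(f\) is convex (Assumption~\ref{assp:l-nondecreasing} gives \(f(\bz)=\sum_i\sigma_i l(z_{[i]})\) with nondecreasing \(\sigma\)), so the prox objective is strongly convex. The objective of \eqref{problem:pav-problem} is strongly convex over a closed convex cone, so \(\bz^*(\by)\) exists and is unique.

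\textbf{Key step (rearrangement).} I claim the minimizer \(\bx^\star=\prox_{\rho f}(\bb)\) satisfies \(P\bx^\star\) nondecreasing. Permutation invariance of \(f\) alone does not give this, so I would argue by exchange. Suppose \(b_i\le b_j\) but \(x^\star_i>x^\star_j\). Swapping \(x^\star_i\) and \(x^\star_j\) leaves \(f\) unchanged, since \(f\) is symmetric. It changes the quadratic term by
\[
-(x^\star_i-x^\star_j)(b_j-b_i)\cdot 2/2\le 0,
\]
which is the rearrangement inequality. Hence the swapped point is also a minimizer, and by uniqueness it equals \(\bx^\star\), contradicting \(x^\star_i\neq x^\star_j\). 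Therefore \(b_i\le b_j\Rightarrow x^\star_i\le x^\star_j\), and in particular \(P\bx^\star\) is sorted nondecreasingly. Ties in \(\bb\) need care: for \(b_i=b_j\), the same swap argument shows \(x^\star_i=x^\star_j\). This is why any \(P\in\mathcal{P}(\bb)\) works, and it is the main subtle point.

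\textbf{Reduction.} Substituting \(\bx=P^\top\bz\), the prox objective becomes \(\rho f(\bz)+\frac12\|\bz-\by\|^2\), since \(f\) and the norm are permutation invariant. On the cone \(z_1\le\cdots\le z_n\), we have \(f(\bz)=\sum_i\sigma_i l(z_i)\), because \(l\) is nondecreasing and so sorting \(\ell(\bz)\) is the same as sorting \(\bz\). Thus on this cone the prox objective equals the objective of \eqref{problem:pav-problem}. The key step shows that the unconstrained minimizer \(P\bx^\star\) lies in the cone. It therefore minimizes \eqref{problem:pav-problem}, and by uniqueness \(P\bx^\star=\bz^*(\by)\), i.e. \(\prox_{\rho f}(\bb)=P^\top\bz^*(P\bb)\).
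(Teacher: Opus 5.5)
Your proof is correct and follows the paper's own argument. An exchange (rearrangement) step shows that $\prox_{\rho f}(\bb)$ inherits the ordering of $\bb$, including equal entries where $\bb$ has ties. The orthogonal change of variables $\bx=P^\top\bz$ together with uniqueness of the minimizer of the ordered problem then gives $\prox_{\rho f}(\bb)=P^\top\bz^*(P\bb)$. Two small differences: you treat strict and tied pairs in one stroke (a nonstrict decrease plus uniqueness), and you spell out why $P\prox_{\rho f}(\bb)$ solves the constrained ordered problem, a step the paper leaves implicit.
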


\begin{proof}
Let $\bar{\bz}=\prox_{\rho f}(\bb)$, i.e., the unique minimizer of $\sum_{i=1}^n \rho \sigma_i l(z_{[i]})+\frac12\|\bz-\bb\|^2$. By the rearrangement argument in \cite[Lemma 3]{cui2018portfolio} and \cite[Section 4]{cui2025decision}, if $b_i>b_j$ then $\bar z_i\ge \bar z_j$; otherwise, swapping $\bar z_i$ and $\bar z_j$ leaves $\sum_{i=1}^n \rho \sigma_i l(z_{[i]})$ unchanged and strictly decreases the quadratic term. If $b_i=b_j$, the same swap leaves the whole objective unchanged, and uniqueness implies $\bar z_i=\bar z_j$. Hence $\bar{\bz}$ has the same sorting pattern as $\bb$, so $P\bar{\bz}$ is nondecreasing for every $P\in\mathcal{P}(\bb)$.

Now fix any $P\in\mathcal{P}(\bb)$ and set $\hat{\bz}=P\bar{\bz}$. Since $P\bb=\vec{\bb}$ and $P$ is orthogonal, $\sum_{i=1}^n \rho \sigma_i l(\bar z_{[i]})+\frac12\|\bar{\bz}-\bb\|^2=\sum_{i=1}^n \rho \sigma_i l(\hat z_i)+\frac12\|\hat{\bz}-\vec{\bb}\|^2$. Thus, $\hat{\bz}$ is the unique optimal solution to \eqref{problem:pav-problem} with $\by=P\bb=\vec{\bb}$, namely, $\hat{\bz}=\bz^*(P\bb)=\bz^*(\vec{\bb})$. Therefore, $\prox_{\rho f}(\bb)=P^\top \bz^*(P\bb)=P^\top \bz^*(\vec{\bb})$.
\end{proof}

\subsection{A Structure-Exploiting PAVA}
The ordered subproblem~\eqref{problem:pav-problem} captures the ranking structure in \(f\), which makes the SRM proximal mapping nonsmooth and nonseparable. To exploit this structure, we develop a PAVA variant tailored to problem~\eqref{problem:pav-problem}. We begin by recalling the classical PAVA~\cite{best2000minimizing}, which solves isotonic optimization problems of the form
\[
\min_{\bz}\quad \sum_{i=1}^n \theta_i(z_i)
\qquad \mathrm{s.t.} \quad
z_1 \le z_2 \le \cdots \le z_n,
\]
where each $\theta_i$ is a univariate convex function.
The algorithm maintains a partition of $\llbracket 1,n\rrbracket$ into consecutive blocks
$[s_1+1:s_2], [s_2+1:s_3], \dots, [s_K+1:s_{K+1}]$, with $s_1=0$ and $s_{K+1}=n$,
where all variables in a block share a common value
\[
v_{[p,q]} \triangleq  \arg\min_{\bar z} \sum_{i=p}^q \theta_i(\bar z).
\]
Two consecutive blocks $[p,q]$ and $[q+1,r]$ are called \emph{in-order} if $v_{[p,q]} \le v_{[q+1,r]}$ and \emph{out-of-order} otherwise.
More generally, a sequence of consecutive blocks is \emph{consecutively out-of-order} if the common values strictly decrease.
PAVA starts by treating each index as a singleton block $[i:i]$.
During iterations, blocks are exclusively merged such that whenever two consecutive blocks are \emph{out-of-order}, they are combined into a single block.
The process repeats until all blocks are \emph{in-order}, at which point the algorithm terminates.

Given the structure of problem \eqref{problem:pav-problem}, Proposition~1 of \cite{xiao2023unified} presents an improved version of the traditional PAVA. Specifically, when encountering consecutively \emph{out-of-order} blocks of length greater than 2, this variant allows for simultaneous merging, rather than performing block-wise merging individually.

Our variant differs from the classical PAVA in two respects: it allows backward pooling after a forward pooling step and delays the evaluation of merged-block minimizers. The local tests that make these two modifications implementable are developed next.

In our setting, $\theta_i(z_i)=\rho\sigma_i\,l(z_i)+\frac12(z_i-y_i)^2$.
For a pooled block \([m,r]\), the associated scalar decision is obtained from
\[
\min_{\bar{z}}\ \sum_{i=m}^{r}\left(\rho\sigma_i\,l(\bar{z})+\frac12(\bar{z}-y_i)^2\right).
\]
By collecting the terms that depend on \(\bar z\), this problem is equivalent to
\begin{equation}
\label{problem:merge-block-problem}
\min_{\bar{z}}\ \theta_{[m,r]}(\bar{z})
\triangleq \rho\,\sigma_{[m,r]}l(\bar{z})+\frac12\bigl(\bar{z}-y_{[m,r]}\bigr)^2,
\end{equation}
where
$
\sigma_{[m,r]}\triangleq \frac{1}{r-m+1}\sum_{i=m}^{r}\sigma_i,
y_{[m,r]}\triangleq \frac{1}{r-m+1}\sum_{i=m}^{r}y_i.
$

The following proposition gives the block-order tests used to decide whether the current block should be pooled further with a neighboring block. For a continuously differentiable loss, these tests use only first-order information.

\begin{proposition}
\label{prop:pava-opt}
Suppose that Assumption~\ref{assp:l-nondecreasing} holds. Consider a block $[m,r]$ in the current partition. Let $[p,m-1]$ and $[r+1,q]$ denote its adjacent predecessor and successor, when present, and suppose that their block values are known. Set
$
v_{[m,r]}\triangleq \arg\min_{\bar z}\theta_{[m,r]}(\bar z),
$
with $\theta_{[m,r]}$ defined in \eqref{problem:merge-block-problem}. For the successor, the \textnormal{\textsc{Forward Condition}} in $[m,r]$ is $v_{[m,r]} > v_{[r+1,q]}$. For the predecessor, the \textnormal{\textsc{Backward Condition}} in $[m,r]$ is $v_{[m,r]} < v_{[p,m-1]}$. The following criteria apply to these forward and backward tests.
\begin{enumerate}
\item For $l(z)=\max\{0,1+z\}$,
    \begin{equation}
    \label{eq:hinge-block-minimizer}
    v_{[m,r]} = \min\!\left\{y_{[m,r]},\ \max\{y_{[m,r]}-\rho\sigma_{[m,r]},-1\}\right\}.
    \end{equation}
    Hence the applicable conditions are checked by direct comparison with the neighboring block values.
\item For a continuously differentiable loss $l$, the \textnormal{\textsc{Forward Condition}} in $[m,r]$ holds if and only if $\theta'_{[m,r]}(v_{[r+1,q]}) < 0$, and the \textnormal{\textsc{Backward Condition}} in $[m,r]$ holds if and only if $\theta'_{[m,r]}(v_{[p,m-1]}) > 0$.
\end{enumerate}
\end{proposition}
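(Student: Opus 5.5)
Both parts follow from elementary one-dimensional convex analysis of the scalar block problem \eqref{problem:merge-block-problem}. The key observation is that $\theta_{[m,r]}$ is $1$-strongly convex, being the sum of $\rho\sigma_{[m,r]}l$ (convex, since $\sigma_i\ge0$ and $l$ is convex by Assumption~\ref{assp:l-nondecreasing}) and a unit quadratic. It therefore has a unique minimizer $v_{[m,r]}$, characterized by $0\in\partial\theta_{[m,r]}(v_{[m,r]})=\rho\sigma_{[m,r]}\partial l(v_{[m,r]})+v_{[m,r]}-y_{[m,r]}$.

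\textbf{Part 1 (hinge loss).} Write $s\triangleq\rho\sigma_{[m,r]}\ge0$ and $\bar y\triangleq y_{[m,r]}$. Since $\partial l(z)=\{0\}$ for $z<-1$, $[0,1]$ at $z=-1$, and $\{1\}$ for $z>-1$, the optimality condition $\bar y-v\in s\,\partial l(v)$ splits into three cases.
\begin{itemize}
\item[(a)] If $v<-1$, then $v=\bar y$, which requires $\bar y<-1$.
\item[(b)] If $v>-1$, then $v=\bar y-s$, which requires $\bar y-s>-1$.
\item[(c)] Otherwise $v=-1$, which requires $\bar y+1\in[0,s]$, i.e.\ $-1\le\bar y\le -1+s$.
\end{itemize}
Next I check that $\min\{\bar y,\max\{\bar y-s,-1\}\}$ reproduces each case. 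In (a), $\bar y-s\le\bar y<-1$, so the max is $-1$ and the min is $\bar y$. In (b), the max is $\bar y-s\le\bar y$, so the min is $\bar y-s$. In (c), the max is $-1\le\bar y$, so the min is $-1$. These cases cover all $\bar y$, and uniqueness of the minimizer gives \eqref{eq:hinge-block-minimizer}. The statement about direct comparison then follows immediately, because the two conditions are inequalities between explicitly computable numbers.

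\textbf{Part 2 (differentiable $l$).} Now $\theta_{[m,r]}$ is $C^1$ and strongly convex, so $\theta'_{[m,r]}$ is strictly increasing; indeed $\theta'(a)-\theta'(b)\ge a-b$ for $a>b$, by monotonicity of $l'$. Combined with $\theta'_{[m,r]}(v_{[m,r]})=0$, this gives, for any scalar $t$, $t<v_{[m,r]}\iff\theta'_{[m,r]}(t)<0$ and $t>v_{[m,r]}\iff\theta'_{[m,r]}(t)>0$. Taking $t=v_{[r+1,q]}$ shows that the Forward Condition $v_{[m,r]}>v_{[r+1,q]}$ is equivalent to $\theta'_{[m,r]}(v_{[r+1,q]})<0$. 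Taking $t=v_{[p,m-1]}$ shows that the Backward Condition is equivalent to $\theta'_{[m,r]}(v_{[p,m-1]})>0$.

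The only delicate point is the strict monotonicity of $\theta'$, which is needed for the equivalences to be exact "if and only if" statements rather than mere implications. Strong convexity from the quadratic term supplies it regardless of any flat pieces of $l$. In Part 1, the corresponding care is simply to check that the boundary cases $\bar y=-1$ and $\bar y=-1+s$ match across the three regimes.
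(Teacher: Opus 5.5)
Your proposal is correct and follows essentially the same route as the paper. For the hinge loss, both arguments run a three-case analysis of the optimality condition $0\in\rho\sigma_{[m,r]}\partial l(v)+v-y_{[m,r]}$. For differentiable $l$, both use strict monotonicity of $\theta'_{[m,r]}$ (from monotonicity of $l'$ plus the unit quadratic) together with $\theta'_{[m,r]}(v_{[m,r]})=0$. Your proof is slightly more careful in two places: you explicitly check that $\min\{\bar y,\max\{\bar y-s,-1\}\}$ reproduces each case, and you use strict boundary inequalities in cases (a) and (b), whereas the paper passes directly from the case analysis to the formula.
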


\begin{proof}
    Let $w=\rho\sigma_{[m,r]}\ge 0$ and $y=y_{[m,r]}$. For the hinge loss, $v_{[m,r]}$ is the unique minimizer of $\theta_{[m,r]}(z)=w\max\{0,1+z\}+\frac12(z-y)^2$, so the optimality condition is $0\in w\partial\max\{0,1+v_{[m,r]}\}+v_{[m,r]}-y$. If $v_{[m,r]}<-1$, then $v_{[m,r]}=y$, which requires $y\le -1$; if $v_{[m,r]}=-1$, then $y\in[-1,-1+w]$; if $v_{[m,r]}>-1$, then $v_{[m,r]}=y-w$, which requires $y\ge -1+w$. Hence $v_{[m,r]}=\min\{y,\max\{y-w,-1\}\}$, which gives \eqref{eq:hinge-block-minimizer}.
    Substituting this value into the definitions of the \textnormal{\textsc{Forward Condition}} and \textnormal{\textsc{Backward Condition}} gives the stated hinge-loss tests.

    Now suppose that $l$ is continuously differentiable. Then $\theta_{[m,r]}$ is differentiable and strongly convex, with derivative $\theta'_{[m,r]}(z)=\rho\sigma_{[m,r]}l'(z)+z-y_{[m,r]}$. By Theorem~2.13 in~\cite{rockafellar2009variational}, the convexity of $l$ implies that $l'$ is nondecreasing; therefore, $\theta'_{[m,r]}$ is strictly increasing. Since $v_{[m,r]}$ is the unique minimizer of $\theta_{[m,r]}$, we have $\theta'_{[m,r]}(v_{[m,r]})=0$, and thus $\theta'_{[m,r]}(x)<0$ if and only if $x<v_{[m,r]}$, while $\theta'_{[m,r]}(x)>0$ if and only if $x>v_{[m,r]}$. Taking $x=v_{[r+1,q]}$ and $x=v_{[p,m-1]}$ yields the stated forward and backward criteria.
\end{proof}

Proposition~\ref{prop:pava-opt} supplies the local tests used in Algorithm~\ref{alg:pav}. For a newly formed candidate block, the \textnormal{\textsc{Forward Condition}} and \textnormal{\textsc{Backward Condition}} are checked by the criteria in Proposition~\ref{prop:pava-opt}, using either the closed-form hinge test or the derivative test at neighboring block values. Thus, the univariate minimization for the candidate block is solved only after the inner loop terminates.

Algorithm~\ref{alg:pav} summarizes this recursive pooling scheme. In each main-loop iteration, line~5 initializes the current candidate block \([s_k,t-1]\), where \(s_k\) is its first index and \(t\) is the index of the first singleton block to its right. The inner loop in line~6 repeatedly checks the two \emph{out-of-order} conditions from Proposition~\ref{prop:pava-opt}. A forward pooling step in lines~8--9 appends \([t,t]\) to the current candidate block, while a backward pooling step in lines~10--12 merges the candidate block with its predecessor. The flag \(V_{\text{flag}}\), initialized in line~5 and set in line~7, determines whether line~14 computes the merged-block minimizer. If no pooling occurs, line~14 is skipped. Line~15 then moves to the next candidate block. The following proposition formalizes the resulting termination bound, subproblem count, and global optimality guarantee.

\begin{algorithm}[htbp]
\caption{A practical pool adjacent violators algorithm for \eqref{problem:pav-problem}}
\label{alg:pav}
\begin{algorithmic}[1]

\Statex \textbf{Input:}  set $J = \{ [1,1],[2,2],\dots,[n,n]\}$, components $\theta_i(\cdot)$, $i = 1,2,\dots,n$

\FOR{each $[i,i] \in J$}
    \STATE compute the minimizer $v_{[i,i]}$ of $\theta_i(\bar z)$
\ENDFOR

\STATE $k=1$, $j = 1$

\WHILE{$j \leq n$}
    \STATE $t = j + 1$, $V_{\text{flag}} \leftarrow \text{False}$, $s_k = j$
    \WHILE{($t \le n$ and the \textnormal{\textsc{Forward Condition}} in $[s_k,t-1]$ holds) or ($k>1$ and the \textnormal{\textsc{Backward Condition}} in $[s_k,t-1]$ holds)}
        \STATE $V_{\text{flag}} \leftarrow \text{True}$
        \IF{$t \le n$ and the \textnormal{\textsc{Forward Condition}} in $[s_k,t-1]$ holds}
            \STATE $J \leftarrow (J \setminus \{[s_k,t-1],[t,t]\}) \cup \{[s_k,t]\}$, $t \leftarrow t + 1$ \Comment{Forward pooling}
        \ENDIF
        \IF{$k>1$ and the \textnormal{\textsc{Backward Condition}} in $[s_k,t-1]$ holds}
            \STATE $\begin{aligned}[t]
                J &\leftarrow (J \setminus \{[s_{k-1},s_k-1],[s_k,t-1]\})\cup \{[s_{k-1},t-1]\}
            \end{aligned}$
            \STATE $s_k \leftarrow s_{k-1}$, $k \leftarrow k - 1$ \Comment{Backward pooling}
        \ENDIF
    \ENDWHILE

    \IF{$V_{\text{flag}} = \text{True}$}
        \STATE compute the minimizer $v_{[s_k,t-1]}$ of $\sum_{i = s_k}^{t-1} \theta_i(\bar z)$
    \ENDIF

    \STATE $j \leftarrow t,~k \leftarrow k+1$ \Comment{Move to next candidate block}
\ENDWHILE

\FOR{each $[m,r] \in J$}
    \STATE $z_i^* = v_{[m,r]},~\forall i = m, m+1, \dots, r$
\ENDFOR
\STATE \textbf{Output:} optimal solution $\{z_1^*,z_2^*,\dots, z_n^* \}$
\end{algorithmic}
\end{algorithm}

\begin{proposition}
Suppose that Assumption~\ref{assp:l-nondecreasing} holds and that the individual loss $l$ is either the hinge loss or continuously differentiable. Then Algorithm~\ref{alg:pav} performs at most \(2n-1\) iterations of its two while loops in total and solves at most \(2n-1\) univariate optimization subproblems. Furthermore, Algorithm~\ref{alg:pav} is guaranteed to find a global minimizer of problem~\eqref{problem:pav-problem}.
\end{proposition}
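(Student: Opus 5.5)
The argument has two parts: a counting argument for the iteration and subproblem bounds, and an optimality argument that reduces Algorithm~\ref{alg:pav} to the classical PAVA guarantee of \cite{best2000minimizing}.

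\textbf{Counting.} I will use a potential argument on the partition $J$. Initially $|J|=n$. Every iteration of the inner while loop sets $V_{\text{flag}}$ and performs at least one pooling, forward or backward. Each pooling reduces $|J|$ by one and $|J|\ge 1$ always, so at most $n-1$ poolings occur in total. Hence there are at most $n-1$ inner iterations.

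For the outer loop, the index $j$ strictly increases, since line~15 sets $j\leftarrow t\ge j+1$. Moreover $j$ only takes values that are left endpoints of singleton blocks not yet visited, so there are at most $n$ outer iterations. Summing gives at most $2n-1$ loop iterations.

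For subproblems, the $n$ singleton minimizers are computed in lines~1--3. Line~14 is executed only when $V_{\text{flag}}$ is true, i.e.\ only after at least one pooling has occurred in that outer iteration. To bound these, I charge each execution of line~14 to a distinct pooling in that round, so there are at most $n-1$ of them. This gives at most $2n-1$ univariate minimizations. The derivative tests of Proposition~\ref{prop:pava-opt} only evaluate $\theta'$ and involve no minimization, so they are not counted; in the hinge case I will treat \eqref{eq:hinge-block-minimizer} as a closed-form evaluation inside the test.

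\textbf{Optimality.} I will show that at termination every block value is the exact block minimizer and all consecutive blocks are in-order. Then I invoke the standard optimality characterization of PAVA for separable convex isotonic problems \cite{best2000minimizing}: a partition into consecutive blocks whose values $v_{[m,r]}=\arg\min\sum_{i=m}^r\theta_i$ are nondecreasing gives a global minimizer, and by strong convexity it is the unique one. The proof maintains the following invariant at the start of each outer iteration: the finalized blocks $[s_1,\cdot],\dots,[s_{k-1},j-1]$ carry correct minimizers and are pairwise in-order, and all indices $\ge j$ are singletons.

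When the inner loop exits, neither condition holds for the current block $[s_k,t-1]$. By Proposition~\ref{prop:pava-opt}, this means it is in-order with respect to both its predecessor and the singleton $[t,t]$. Its value is then computed in line~14, or, if no pooling occurred, it is already the singleton value. The tests in Proposition~\ref{prop:pava-opt} compare against $v_{[m,r]}$ without computing it: in part~2 via the sign of $\theta'_{[m,r]}$ at the neighbor's value, and in part~1 via the closed form. So they are exact even though the minimizer of the candidate block is delayed. This is where I use the hypothesis that $l$ is hinge or $C^1$.

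The predecessor's value is still valid, because a backward pooling replaces it by a merged block that remains the current candidate. Blocks further left are unaffected. The only subtle point is that a backward merge might create a new violation with the next predecessor; this is handled because the inner loop re-tests the Backward Condition after each merge.

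\textbf{Main obstacle.} The main obstacle is justifying that pooling decisions are correct, i.e.\ that merging out-of-order blocks never excludes the optimum. For this I will cite the classical pooling lemma \cite{best2000minimizing} (and \cite[Proposition~1]{xiao2023unified}): if adjacent blocks are out-of-order, some optimal solution has equal values on their union. Our algorithm only performs such merges, in a different order, and the pooling lemma is order-independent. Combined with the invariant, the output therefore satisfies the optimality characterization, which proves global optimality.
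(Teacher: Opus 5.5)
Your proposal is correct and follows the paper's proof. The same merge count bounds both the inner-loop iterations and the executions of line~14 by $n-1$, alongside at most $n$ outer iterations and $n$ singleton solves. Optimality follows because the tests of Proposition~\ref{prop:pava-opt} are exact out-of-order tests, so Algorithm~\ref{alg:pav} is PAVA with delayed block evaluations and Theorem~2.5 of \cite{best2000minimizing} applies.

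One caveat on wording: the ``characterization'' you quote (an in-order partition carrying exact block minimizers is optimal) is false for arbitrary blocks, since a single block $[1,n]$ is trivially in-order. Your phrasing of the pooling lemma has the same problem. Both need the invariant ``the unique minimizer is constant on every current block,'' which holds for singletons and is preserved by each out-of-order merge; together with in-order termination, this gives optimality.
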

\begin{proof}
The main-while loop (lines~4--15) is executed at most $n$ times, since $j$ increases by at least one in each main-loop iteration. In the inner while loop (lines~6--12), each iteration performs at least one merge. Since the algorithm starts from \(n\) singleton blocks and no merge can reduce the number of blocks below one, there are at most \(n-1\) merges, so this inner loop is executed at most \(n-1\) times in total. Therefore, the total number of main-loop and inner-loop iterations is at most \(2n-1\).

Algorithm~\ref{alg:pav} initially solves $n$ singleton subproblems. Afterwards, an additional univariate subproblem is solved only when $V_{\text{flag}}=\text{True}$, which means that the current main-loop iteration contains a merge. The same merge count therefore bounds the number of additional subproblems by \(n-1\). Hence the total number of univariate optimization subproblems solved is at most \(2n-1\).

Finally, by Proposition~\ref{prop:pava-opt}, each forward or backward test is exactly an \emph{out-of-order} test for the current block with its successor or predecessor. Thus, Algorithm~\ref{alg:pav} performs the same block-merging logic as PAVA while delaying the evaluation of the merged-block minimizer until the inner loop terminates. Consequently, the algorithm terminates with an \emph{in-order} partition, and Theorem~2.5 of \cite{best2000minimizing} implies that the resulting vector is a global minimizer of problem~\eqref{problem:pav-problem}.
\end{proof}
\begin{remark}
The preceding proposition gives explicit worst-case bounds of \(2n-1\) for the number of univariate optimization subproblems. In practice, the subproblem count is often smaller. For instance, under the $\nu$-superquantile weights in \eqref{eq:srm_cvar}, if \(n\nu\) is an integer, the number of such subproblems is at most $n+1$. Specifically, after the initial computation of the optimal values for all $n$ singleton blocks, the only possible violation of monotonicity occurs at the boundary between blocks $i$ and $i+1$, where $i$ is the largest index satisfying $\sigma_i=0$ (i.e., $\sigma_i=0$ and $\sigma_{i+1}\neq 0$). Therefore, after a single main-while loop iteration, the block values are guaranteed to be \emph{in-order}. By comparison, in the worst case, Algorithm~2 in \cite{xiao2023unified} may still need to solve up to $2n$ univariate optimization subproblems.
\end{remark}

\section{\texorpdfstring{Generalized Jacobians of the SRM Proximal Mapping}{Generalized Jacobians of the SRM Proximal Mapping}}
\label{sec:hs-jacobian}
Building on the PAVA evaluation of \(\operatorname{prox}_{\rho f}\) in Section~\ref{sec:prox-pava}, this section characterizes generalized derivatives of this mapping needed to construct Newton directions from \eqref{eq:nabla-phik}. Following Han and Sun's construction of computable generalized Jacobians for normal maps over polyhedral sets~\cite{han1997newton}, we refer to the derivative family constructed below as an \textit{HS--Jacobian}. We first recall the required semismoothness notions and then derive \textit{HS--Jacobian} characterizations for the hinge and SC\(^1\) losses.

We begin by recalling the semismoothness and generalized Jacobian notions used below and in the \textnormal{\textsc{Ssn}} analysis of Section~\ref{sec:ssn-linear-algebra} \cite{Qi1993semismooth,gowda2004inverse,li2018efficiently,clarke1990nonsmooth}.

\begin{definition}[G-semismoothness \cite{gowda2004inverse} and Semismoothness \cite{Qi1993semismooth}]
\label{def:semismooth}
Let $F : \Omega \to \mathbb{R}^m$ be a locally Lipschitz continuous mapping on an open set $\Omega \subseteq \mathbb{R}^n$. We say $F$ is \emph{G-semismooth} at $\bx \in \Omega$ with respect to an upper semicontinuous, compact-valued mapping $\mathcal{K} : \Omega \rightrightarrows \mathbb{R}^{m \times n}$ if, as $\bd \to \bs{0}$,
\[
F(\bx + \bd) - F(\bx) - J \bd = o(\|\bd\|), \qquad \forall J \in \mathcal{K}(\bx + \bd).
\]
Moreover, $F$ is said to be \emph{strongly G-semismooth} at $\bx$ with respect to $\mathcal{K}$ if the above remainder is $O(\|\bd\|^2)$.
Furthermore, if $F$ is both (strongly) G-semismooth at $\bx$ with respect to $\mathcal{K}$ and directionally differentiable at $\bx$, then it is called (strongly) \emph{semismooth} at $\bx$ with respect to $\mathcal{K}$.
\end{definition}

We shall also use the standard fact that the proximal mapping of a proper closed convex piecewise linear--quadratic function is piecewise affine~\cite[Proposition~12.30]{rockafellar2009variational}, and therefore strongly semismooth~\cite[Proposition~7.4.7]{facchinei2003finite}.

\begin{definition}[SC$^1$ function, {\cite[Section~7.4.1]{facchinei2003finite}}]\label{def:sc1-fun}
A function \(l:\mathbb{R}\to\mathbb{R}\) is called an SC\(^1\) function if \(l\) is continuously differentiable and its derivative \(l'\) is semismooth with respect to \(\partial l'\). Here, \(\partial l'\) denotes the Clarke generalized Jacobian of \(l'\)~\cite{clarke1990nonsmooth}.
\end{definition}

Many loss functions used in practice are SC\(^1\), e.g., \(C^2\) losses such as the logistic loss \(l(z)=\log(1+e^{z})\), and the smoothed hinge loss introduced below Assumption~\ref{assp:l-nondecreasing}.

\subsection{The Hinge-Loss Case}
We first consider the hinge loss \(l(z)=\max\{0,1+z\}\). Its piecewise linear structure allows a more explicit description of \(\operatorname{prox}_{\rho f}\) and its \textit{HS--Jacobian}~\cite{han1997newton}. For an ordered data vector \(\by\), specializing problem~\eqref{problem:pav-problem} to the hinge loss yields the following formulation
\begin{equation}
\label{problem:hinge-loss-pava}
\min_z\quad \sum_{i=1}^n \left(\rho\sigma_i \max\{1+z_i, 0\} + \frac{1}{2}( z_i-y_i )^2\right) \qquad \mathrm{s.t.}\quad  z_1 \leq z_2 \leq \cdots \leq z_n.
\end{equation}

We partition the ordered data vector \(\by\) relative to the threshold \(-1\) by defining indices \(0\le k_0\le k_1\le n\) such that
\begin{equation}
\label{eq:structure-of-vecb}
y_1 \le \cdots \le y_{k_0} < -1,
\qquad
y_{k_0+1} = \cdots = y_{k_1} = -1,
\qquad
-1 < y_{k_1+1} \le \cdots \le y_n.
\end{equation}
The partition in \eqref{eq:structure-of-vecb} allows some segments to be empty.  We now introduce an auxiliary subproblem and its associated index family $\mathcal{B}^s(\by)$. For \(s\in\llbracket 0,n-1\rrbracket\) and a vector \(\by \in \mathbb{R}^{n-s}\), consider
\begin{equation}
\label{problem:left-part-problem}
\begin{aligned}
\min_{\bar{\bz} \in \mathbb{R}^{n-s}}
\sum_{i=1}^{n-s} \left(\rho\sigma_{s+i} \bar{z}_i +\frac{1}{2}( \bar{z}_i- y_i )^2\right) \qquad
\mathrm{s.t.}\quad
 -1 \leq \bar{z}_1 \leq \bar{z}_2 \leq \cdots \leq \bar{z}_{n-s}.
\end{aligned}
\end{equation}
Because the linear independence constraint qualification (LICQ)~\cite[Definition~12.4]{nocedal2006numerical} trivially holds for \eqref{problem:left-part-problem}, the Lagrange multiplier associated with its optimal solution is unique~\cite[Exercise~12.17]{nocedal2006numerical}.

\begin{definition}
\label{def:b-family-and-solution}
    For \(s\in\llbracket 0,n-1\rrbracket\) and \(\by\in\mathbb{R}^{n-s}\), let \(\bz^*_{(s)}(\by)\in \mathbb{R}^{n-s}\) denote the unique optimal solution to \eqref{problem:left-part-problem}, and let \(\blambda^*_{(s)}(\by)\) denote its unique optimal Lagrange multiplier. Set \((\bz^*_{(s)}(\by))_0\triangleq -1\) and define the active index set
    \[
    \mathcal I^s(\by)\triangleq \{i\in\llbracket 1,n-s\rrbracket\mid (\bz^*_{(s)}(\by))_i=(\bz^*_{(s)}(\by))_{i-1}\}.
    \]
    Define
    \begin{equation}
    \label{eq:b-family}
    \mathcal{B}^s(\by)\triangleq
    \left\{K\subseteq\llbracket 1,n-s\rrbracket\mid
    \operatorname{supp}(\blambda^*_{(s)}(\by))\subseteq K\subseteq \mathcal I^s(\by)
    \right\}.
    \end{equation}
    When \(s=n\), the auxiliary problem is vacuous, and we use the conventions \(\bz^*_{(n)}(\emptyset)=\emptyset\), \(\mathcal I^n(\emptyset)=\emptyset\), and \(\mathcal{B}^n(\emptyset)=\{\emptyset\}\).
\end{definition}

A key observation from the mechanics of Algorithm~\ref{alg:pav} is that the solution to problem~\eqref{problem:hinge-loss-pava} structurally decomposes into distinct segments.
\begin{proposition}
    \label{prop:hinge-pava-solution}
    Let $\bb\in\mathbb{R}^n$, set \(\by=\vec{\bb}\), and let $\bz^*(\by)$ denote the unique solution to problem~\eqref{problem:hinge-loss-pava}. Based on the partition indices $0 \le k_0 \le k_1 \le n$ defined in \eqref{eq:structure-of-vecb}, the first two segments of the solution are
    \[ \bz^*_{\llbracket 1,k_0\rrbracket}(\by) = \by_{\llbracket 1,k_0\rrbracket} < -\mathbf{1}, \qquad \bz^*_{\llbracket k_0 + 1,k_1\rrbracket}(\by) = -\mathbf{1}, \]
    where the equalities and inequalities involving vectors are understood componentwise, and the corresponding segments are empty if $k_0 = 0$ or $k_1 = k_0$, respectively.     
    The remaining tail is represented by the auxiliary problem~\eqref{problem:left-part-problem}: for every $s \in \llbracket k_0,k_1\rrbracket$,
    \[
    \bz^*_{\llbracket s+1,n\rrbracket}(\by) = \bz^*_{(s)}(\by_{\llbracket s+1,n\rrbracket}),
    \]
    where both sides are understood as empty vectors when $s=n$.
\end{proposition}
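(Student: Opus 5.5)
Since problem~\eqref{problem:hinge-loss-pava} is strongly convex, its solution $\bz^*(\by)$ is unique. So the plan is to build a feasible candidate $\hat\bz$ from the three pieces in the statement and then verify the KKT conditions of \eqref{problem:hinge-loss-pava} at $\hat\bz$. Fix $s\in\llbracket k_0,k_1\rrbracket$ and define
\[
\hat z_i = y_i \ (i\le k_0),\qquad \hat z_i=-1\ (k_0<i\le s),\qquad \hat\bz_{\llbracket s+1,n\rrbracket}=\bz^*_{(s)}(\by_{\llbracket s+1,n\rrbracket}).
\]
Feasibility is immediate. The first block is increasing and strictly below $-1$. The middle block equals $-1$. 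The tail is nondecreasing and $\ge -1$ because of the constraint in \eqref{problem:left-part-problem}. Once optimality of $\hat\bz$ is shown for each such $s$, uniqueness gives all the claimed identities at once, including the claim that the tail is independent of $s$. Taking $s=k_1$ also shows $\bz^*_{\llbracket k_0+1,k_1\rrbracket}=-\mathbf 1$.

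For optimality I would use the multiplier form of the KKT system. With multipliers $\mu_i\ge0$ for the constraints $z_i\le z_{i+1}$, $i=1,\dots,n-1$, and $\mu_0=\mu_n=0$, the conditions read
\[
0\in \rho\sigma_i\,\partial\max\{0,1+\hat z_i\}+\hat z_i-y_i+\mu_i-\mu_{i-1},
\]
together with complementarity $\mu_i(\hat z_{i+1}-\hat z_i)=0$.

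The tail multipliers come from problem~\eqref{problem:left-part-problem}. Its KKT conditions, with $\blambda^*_{(s)}$ indexed so that $\lambda_1$ is the multiplier of $-1\le\bar z_1$, give
\[
\rho\sigma_{s+i}+\bar z_i-y_{s+i}-\lambda_i+\lambda_{i+1}=0 .
\]
On the tail I set $\mu_{s+i}=\lambda_{i+1}$, which transfers this relation directly. Where $\hat z_i>-1$ the hinge subgradient is $1$, so the conditions match. Where $\hat z_i=-1$, we still choose the subgradient equal to $1$, which is allowed since $1\in[0,1]$. The only place needing care is the junction at $i=s+1$. There the tail condition has the term $-\lambda_1$, which must be produced by $-\mu_s$. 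So I set $\mu_s=\lambda_1\ge0$. Complementarity is then inherited: $\lambda_1>0$ forces $\bar z_1=-1=\hat z_s$.

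The remaining step is the head and middle blocks. For $i\le k_0$ we have $\hat z_i=y_i<-1$, so the subdifferential of the hinge term is $\{0\}$ and the residual $\hat z_i-y_i$ vanishes. Setting $\mu_i=0$ for $i<k_0$ is consistent with the strict inequalities $\hat z_i<\hat z_{i+1}$ at $i<k_0$; the value $\mu_{k_0}$ is fixed below. For $k_0<i\le s$ we have $\hat z_i=-1=y_i$, so the condition reduces to
\[
\mu_{i-1}-\mu_i\in\rho\sigma_i[0,1].
\]
This requires a nonnegative nonincreasing chain $\mu_{k_0}\ge\cdots\ge\mu_s=\lambda_1$ whose decrements satisfy $\mu_{i-1}-\mu_i\le\rho\sigma_i$, together with $\mu_{k_0}=0$. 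The condition $\mu_{k_0}=0$ is needed because $\hat z_{k_0}<-1=\hat z_{k_0+1}$ when $k_0\ge1$.

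This chain is the main obstacle: it needs $\lambda_1\le 0$, i.e.\ $\lambda_1=0$, whenever $s>k_0$. Equivalently, the lower bound of the tail problem must be inactive in the multiplier sense. To prove this, I would use the first tail KKT equations together with $y_{s+i}\ge -1$ and $\sigma\ge0$. Suppose $\lambda_1>0$, and take the first pooled block $[1,m]$ of the tail at value $-1$. Summing the stationarity equations over this block gives
\[
\lambda_1=\sum_{i\le m}\bigl(\rho\sigma_{s+i}-1-y_{s+i}\bigr)+\lambda_{m+1},
\]
with $\lambda_{m+1}=0$ by complementarity, since the block ends at $m$. The $\lambda_1$ obtained this way is genuinely nonnegative and need not be $0$. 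So the naive chain fails, and I would instead absorb $\lambda_1$ into the middle block. That means choosing $\mu_i=\lambda_1$ for $k_0\le i\le s$, which requires $\mu_{k_0}=\lambda_1$; this is admissible only if $k_0=0$, where $\mu_0$ is fixed to $0$ anyway. Hence the genuine fix is to choose subgradients $\eta_i\in[0,1]$ at the points $\hat z_i=-1$ and redistribute the multipliers across the whole block where $\hat z=-1$, spanning both the middle indices and the initial tail indices.

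The alternative route I would try first is a direct comparison. Uniqueness and the optimality conditions of problem~\eqref{problem:left-part-problem} at $s$ versus $s'=k_1$ show that prepending coordinates with $y_i=-1$ to the tail problem only lowers $\lambda_1$ relative to the block sum. One can then verify the full KKT system of \eqref{problem:hinge-loss-pava} by taking $\eta_i$ on the $-1$ block as
\[
\eta_i=1-\frac{\lambda\text{-excess}}{\rho\sigma_i}
\]
clipped to $[0,1]$. Making this accounting precise is where I expect the technical work to lie.
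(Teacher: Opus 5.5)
Your overall strategy (build $\hat\bz$, certify it through the KKT system of \eqref{problem:hinge-loss-pava}, conclude by uniqueness) is sound. However, there is a genuine gap: you never produce the certificate in the one nontrivial case, namely when the multiplier $\lambda_1$ of the bound $-1\le\bar z_1$ in \eqref{problem:left-part-problem} is positive. This case does occur. Take $n=1$, $y_1=-1$ (so $k_0=0$, $k_1=1$) and $s=0$: the tail problem gives $\bar z_1=-1$ with $\lambda_1=\rho\sigma_1=\rho>0$.

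The same example shows that your junction step fails for every $s$, not only for $s>k_0$:
\begin{itemize}
\item If $s=k_0=0$, then $\mu_0=0$ by convention.
\item If $s=k_0\ge1$, then $\hat z_s=y_{k_0}<-1\le\bar z_1$, so complementarity forces $\mu_s=0$. Your inference that $\lambda_1>0$ gives $\bar z_1=-1=\hat z_s$ uses $\hat z_s=-1$, which is false there.
\end{itemize}
So with subgradient $1$ on the tail and $\mu_s=\lambda_1$, the multiplier system is inconsistent whenever $\lambda_1>0$. The clipped recipe for $\eta_i$ does not repair this as stated. It is undefined when $\sigma_i=0$, and clipping alone does not ensure both $\mu_i\ge0$ throughout the $-1$ block and $\mu=0$ at its right end.

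The missing bookkeeping can be done as follows.
\begin{itemize}
\item Let $m$ be the number of leading entries of $\bz^*_{(s)}(\by_{\llbracket s+1,n\rrbracket})$ equal to $-1$. Then $\hat\bz=-\mathbf{1}$ exactly on $\llbracket k_0+1,s+m\rrbracket$. Write $e_i=\rho\sigma_i\eta_i\in[0,\rho\sigma_i]$.
\item Since $\mu_{k_0}=0$, stationarity on this block reads $\mu_j=\sum_{i=k_0+1}^{j}(1+y_i-e_i)$. You need $\mu_j\ge0$ and $\mu_{s+m}=0$.
\item On the middle indices, $1+y_i=0$ forces $e_i=0$, i.e.\ $\eta_i=0$ there, not $1$.
\item On the tail part, the KKT system of \eqref{problem:left-part-problem}, together with $\lambda_{m+1}=0$, gives $\sum_{i=j}^{m}(\rho\sigma_{s+i}-1-y_{s+i})=\lambda_j\ge0$ for $j\in\llbracket 1,m\rrbracket$.
\item Set $e_i=\rho\sigma_i$ from the right end of the block leftward until $\sum_i e_i=\sum_i(1+y_i)$, with one partial entry and $e_i=0$ further left. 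This stops inside the tail part because $\lambda_1\ge0$.
\item At or after the switch point, $\mu_j=\sum_{i=j+1}^{s+m}(\rho\sigma_i-1-y_i)=\lambda_{j-s+1}\ge0$. Before it, $\mu_j=\sum_{i=k_0+1}^{j}(1+y_i)\ge0$.
\item Beyond the block, take $\eta_i=1$ and $\mu_{s+i}=\lambda_{i+1}$.
\end{itemize}

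The paper sidesteps all of this. It applies the KKT conditions to the actual minimizer $\bz^*$ and shows by short contradiction arguments that the prefix equals $\by$ and the middle equals $-1$ (a positive multiplier below $-1$ propagates to index $n$ and contradicts $y_n\ge-1$). It then obtains the tail for every $s$ by restriction. The vector $\bz^*_{\llbracket s+1,n\rrbracket}\ge-\mathbf{1}$ is feasible for \eqref{problem:left-part-problem}, and on that set the two objectives differ by the constant $\rho\sum_{i>s}\sigma_i$. Replacing the tail by $\bz^*_{(s)}$ keeps feasibility since $z^*_s\le-1$, so uniqueness forces equality.
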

\begin{proof}
Because the objective in \eqref{problem:hinge-loss-pava} is the sum of a convex hinge-loss term and a strictly convex quadratic term, it is strongly convex and admits a unique minimizer $\bz^*$. By the KKT conditions, there exists a multiplier $\blambda\in\mathbb{R}^{n-1}$ such that
\begin{subequations}
\begin{alignat}{2}
&0 \in \rho\sigma_i\,\partial l(z_i^*) + z_i^* - y_i + \lambda_i-\lambda_{i-1},
&\qquad& i=1,\dots,n, \label{kkt:stationarity} \\
&\lambda_i(z_i^*-z_{i+1}^*) = 0,\quad
\lambda_i\ge 0,\quad z_i^*\le z_{i+1}^*,
&\qquad& i=1,\dots,n-1, \label{kkt:complementary}
\end{alignat}
\end{subequations}
where $l(z)=\max\{1+z,0\}$, $\lambda_0=\lambda_n=0$, and $\partial l(z)=\{0\}$ for $z<-1$, $\partial l(-1)=[0,1]$, and $\partial l(z)=\{1\}$ for $z>-1$.
We first consider the generic case $0<k_0<k_1<n$, in which all three segments are present.

\emph{Prefix segment.} We show that $\bz^*_{\llbracket 1,k_0\rrbracket}=\by_{\llbracket 1,k_0\rrbracket}$. Suppose for contradiction that $z_1^*\ge -1$. Since $k_0>0$, we have $y_1<-1$, meaning $y_1-z_1^*<0$. From \eqref{kkt:stationarity} at $i=1$, we obtain $y_1-z_1^*-\lambda_1 \in \rho\sigma_1\,\partial l(z_1^*)$. Because $\lambda_1\ge0$, the left-hand side is strictly negative, whereas $\rho\sigma_1\,\partial l(z_1^*)\subseteq[0,\rho\sigma_1]$, yielding a contradiction. Hence $z_1^*<-1$, so $\partial l(z_1^*)=\{0\}$, and \eqref{kkt:stationarity} simplifies to $z_1^*=y_1-\lambda_1$.

Suppose further that $\lambda_1>0$. Then by~\eqref{kkt:complementary}, $z_1^*=z_2^* < -1$, yielding $\partial l(z_2^*)=\{0\}$. Substituting into \eqref{kkt:stationarity} for $i=1,2$ gives $\lambda_2 = 2\lambda_1 + (y_2-y_1)\ge 2\lambda_1>0$. Inductively, this implies $\lambda_i>0$ for all $i=1,\dots,n-1$, resulting in $z_1^*=\cdots=z_n^*< -1$. However, \eqref{kkt:stationarity} at $i=n$ then requires $z_n^*=y_n+\lambda_{n-1}>-1$ (since $y_n\ge -1$ and $\lambda_{n-1}>0$), a contradiction. Therefore, $\lambda_1=0$ and $z_1^*=y_1$. Applying this argument successively to $i=2,\dots,k_0$ confirms that $\lambda_i = 0$ and $z_i^* = y_i$ for all $i \le k_0$. Thus, $\bz^*_{\llbracket 1,k_0\rrbracket}=\by_{\llbracket 1,k_0\rrbracket}$.

\emph{Threshold segment.} We show that $z_i^*=-1$ for all $i\in \llbracket k_0+1,k_1\rrbracket$. Since $\lambda_{k_0}=0$, if $z_{k_0+1}^*<-1$, then \eqref{kkt:stationarity} gives $\lambda_{k_0+1}=-(z_{k_0+1}^*+1)>0$, and \eqref{kkt:complementary} forces $z_{k_0+1}^*=z_{k_0+2}^*<-1$. Iterating this argument yields $z_{k_0+1}^*=\cdots=z_n^*<-1$ and $\lambda_i>0$ for all $i\ge k_0+1$, which contradicts \eqref{kkt:stationarity} at $i=n$ because then $z_n^*=y_n+\lambda_{n-1}>-1$. If instead $z_{k_0+1}^*>-1$, then \eqref{kkt:stationarity} gives $0=\rho\sigma_{k_0+1}+z_{k_0+1}^*+1+\lambda_{k_0+1}$, again impossible. Hence $z_{k_0+1}^*=-1$.

Now let $i\in \llbracket k_0+2,k_1\rrbracket$ and assume that $z_{i-1}^*=-1$. Monotonicity rules out $z_i^*<-1$. If $z_i^*>-1$, then $z_{i-1}^*<z_i^*$, so \eqref{kkt:complementary} implies $\lambda_{i-1}=0$, and \eqref{kkt:stationarity} gives $0=\rho\sigma_i+z_i^*+1+\lambda_i$, impossible. Thus $z_i^*=-1$. Induction yields $\bz^*_{\llbracket k_0+1,k_1\rrbracket}=-\mathbf{1}$, and hence $\bz^*_{\llbracket k_0+1,s\rrbracket}=-\mathbf{1}$ for every $s\in \llbracket k_0,k_1\rrbracket$, with the convention that this segment is empty when $s=k_0$.

\emph{Tail representation.} Fix $s\in \llbracket k_0,k_1\rrbracket$. If $s=n$, the assertion is immediate. Assume $s<n$. If $s<k_1$, then the preceding paragraph gives $z_{s+1}^*=-1$; if $s=k_1$, then $z_s^*=-1$ and monotonicity gives $z_{s+1}^*\ge -1$. Hence $\bz^*_{\llbracket s+1,n\rrbracket}\ge -\mathbf{1}$, so it is feasible for problem~\eqref{problem:left-part-problem}. On this auxiliary feasible set, the tail objective in problem~\eqref{problem:hinge-loss-pava} equals the objective of problem~\eqref{problem:left-part-problem} plus the constant $\rho\sum_{i=s+1}^n\sigma_i$.
Since problem~\eqref{problem:left-part-problem} is strictly convex, it has a unique minimizer, say $\bar{\bz}=\bz^*_{(s)}(\by_{\llbracket s+1,n\rrbracket})$. Replacing $\bz^*_{\llbracket s+1,n\rrbracket}$ by $\bar{\bz}$ preserves feasibility, because there is no left boundary if $s=0$, while $z_s^*\le -1\le\bar z_1$ if $s>0$. If $\bz^*_{\llbracket s+1,n\rrbracket}\neq \bar{\bz}$, this replacement strictly lowers the objective in problem~\eqref{problem:hinge-loss-pava}, a contradiction. Therefore, $\bz^*_{\llbracket s+1,n\rrbracket}(\by)=\bz^*_{(s)}(\by_{\llbracket s+1,n\rrbracket})$.

\emph{Degenerate cases.} The remaining degenerate cases follow by minor modifications of the same argument. If $k_0=0<k_1$, the prefix segment is absent, and the proof for the $-1$ segment starts from $\lambda_0=0$. If $k_0=k_1<n$, the middle segment is absent; starting from $\lambda_{k_0}=0$, the same contradiction as above excludes $z_{k_0+1}^*<-1$, and hence $z_i^*\ge -1$ for all $i\ge k_0+1$, so $\bz^*_{\llbracket k_0+1,n\rrbracket}$ coincides with the unique solution of problem~\eqref{problem:left-part-problem} with $s=k_0$. If $k_1=n$, the tail segment is absent when $s=n$, and the decomposition terminates at index $n$; in particular, if $k_0=k_1=n$, then $\by<-\mathbf{1}$, and $\by$ is feasible with zero objective value. Since the objective is nonnegative and strictly convex, $\by$ is the unique optimal solution. Consequently, the stated decomposition remains valid for all $0\le k_0\le k_1\le n$.
\end{proof}

By Proposition~\ref{prop:hinge-pava-solution}, the sorted hinge proximal solution is reduced, after its explicit prefix and threshold segments, to the tail problem~\eqref{problem:left-part-problem}. After completing the square, this tail problem is a Euclidean projection onto a polyhedral set. Hence Lemma~2.1 of~\cite{han1997newton} gives a local affine representation of its solution mapping, which we use to define the hinge-loss \textit{HS--Jacobian} set for \(\prox_{\rho f}\).

Let \(C^m_K\) be the row submatrix indexed by \(K\) of the lower bidiagonal matrix \(C^m\in\R^{m\times m}\) with diagonal \(-1\) and subdiagonal \(1\). For \(s=0,\dots,n-1\) and \(\by\in\R^{n-s}\), using \(\mathcal{B}^s(\by)\) from Definition~\ref{def:b-family-and-solution}, set
\begin{equation}
\label{eq:hinge-hat-s}
\widehat{\mathcal S}^{\,s}(\by)
\triangleq
\left\{
I_{n-s}-(C^{n-s}_{K})^\top
\big(C^{n-s}_{K}(C^{n-s}_{K})^\top\big)^{-1}C^{n-s}_{K}
\ \middle|\ 
K\in\mathcal{B}^s(\by)
\right\}.
\end{equation}
Set \(\widehat{\mathcal S}^{\,n}(\emptyset)\triangleq\{I_0\}\), where \(I_0\) is the \(0\times0\) empty matrix, and use the convention \(\vec{\bx}_{\llbracket n+1,n\rrbracket}=\emptyset\). For \(\bx\in\R^n\), define
\begin{equation}
\label{eq:hinge-jac-s}
\mathcal S^s(\bx)
\triangleq
\widehat{\mathcal S}^{\,s}(\vec{\bx}_{\llbracket s+1,n\rrbracket}),
\qquad s=0,\dots,n.
\end{equation}
Let \(k_0(\bx)\) and \(k_1(\bx)\) be the partition indices obtained by applying \eqref{eq:structure-of-vecb} with \(\by=\vec{\bx}\), and define
\begin{equation}
\label{eq:hinge-jac-no-p}
\mathcal{J}(\bx)
\triangleq
\left\{
\operatorname{BlkD}(I_s,J_s)
\ \middle|\
s\in \llbracket k_0(\bx),k_1(\bx)\rrbracket,\
J_s\in\mathcal S^s(\bx)
\right\}.
\end{equation}
Finally, define the hinge-loss \textit{HS--Jacobian} set for the original variable by
\begin{equation}
\label{eq:hinge-jac}
\mathcal{J}_{HS}(\bx)
\triangleq
\{P^\top JP\mid P\in\mathcal{P}(\bx),\ J\in\mathcal{J}(\bx)\}.
\end{equation}

\begin{theorem}[\textit{HS--Jacobian} for the hinge loss]
\label{thm:hinge-loss-jac}
    For the hinge loss, fix any reference point \(\bb\in\R^n\). There exists a neighborhood \(N(\bb)\) of \(\bb\) such that, for every \(\ba\in N(\bb)\),
    \[
    \mathcal{J}_{HS}(\ba)\subseteq\mathcal{J}_{HS}(\bb),
    \qquad
    \prox_{\rho f}(\ba)
    =
    \prox_{\rho f}(\bb)+H(\ba-\bb),
    \quad \forall H\in\mathcal{J}_{HS}(\ba).
    \]
\end{theorem}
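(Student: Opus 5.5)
Our approach is to reduce the statement to the sorted coordinates and then apply Han and Sun's local affine representation (Lemma~2.1 of~\cite{han1997newton}) to the tail projection problem~\eqref{problem:left-part-problem}.

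\emph{Neighborhood and permutations.} Choose $N(\bb)$ small enough that three things hold for every $\ba\in N(\bb)$. First, strict inequalities among the components of $\bb$ are preserved, so $\mathcal P(\ba)\subseteq\mathcal P(\bb)$. Second, strict inequalities $b_i<-1$ and $b_i>-1$ are preserved, so $k_0(\bb)\le k_0(\ba)\le k_1(\ba)\le k_1(\bb)$. Indeed, components equal to $-1$ can only move to one of the two sides, and components equal to each other within the $-1$ group can be reordered only by a permutation still in $\mathcal P(\bb)$.

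Fix $P\in\mathcal P(\ba)\subseteq\mathcal P(\bb)$. By Lemma~\ref{lem:prox-permutation}, $\prox_{\rho f}(\ba)=P^\top\bz^*(P\ba)$ and $\prox_{\rho f}(\bb)=P^\top\bz^*(P\bb)$, where $P\bb=\vec{\bb}$ and $P\ba=\vec{\ba}$. It therefore suffices to prove, in sorted coordinates with $\by=\vec{\bb}$ and $\by'=\vec{\ba}=P\ba$, that $\mathcal J(\ba)\subseteq\mathcal J(\bb)$ and that $\bz^*(\by')=\bz^*(\by)+J(\by'-\by)$ for every $J\in\mathcal J(\ba)$. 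Conjugating by $P$ then gives both claims for $\mathcal J_{HS}$, since $P$ ranges in $\mathcal P(\ba)\subseteq\mathcal P(\bb)$.

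\emph{Block affine identity.} Take $J=\operatorname{BlkD}(I_s,J_s)$ with $s\in\llbracket k_0(\ba),k_1(\ba)\rrbracket\subseteq\llbracket k_0(\bb),k_1(\bb)\rrbracket$. By Proposition~\ref{prop:hinge-pava-solution} applied at both points with this common $s$, the tail is $\bz^*_{(s)}$ evaluated at $\by_{\llbracket s+1,n\rrbracket}$ and at $\by'_{\llbracket s+1,n\rrbracket}$, respectively. On the first $s$ coordinates we claim $\bz^*_{\llbracket1,s\rrbracket}=\by_{\llbracket1,s\rrbracket}$ up to a constant. More precisely, the prefix equals $\by$ below $-1$ and $-1$ at the threshold, so we need $z'^*_i-z^*_i=y'_i-y_i$ for $i\le s$.

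This holds for $i\le k_0(\bb)$, where both prefixes equal the data. For $k_0(\bb)<i\le s$, both solutions equal $-1$ when $i>k_0(\ba)$, and indeed $y_i=-1$ in that range. The delicate indices are those in $(k_0(\bb),k_0(\ba)]$, where $z'^*_i=y'_i$ while $z^*_i=-1=y_i$, so the identity still holds. Hence the identity block $I_s$ is correct, with the observation that $y_i=-1$ on $\llbracket k_0(\bb)+1,k_1(\bb)\rrbracket$.

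\emph{Tail via Han--Sun.} After completing the square, problem~\eqref{problem:left-part-problem} is the Euclidean projection of $\by-\rho\bs\sigma_{\llbracket s+1,n\rrbracket}$ onto the polyhedron $\{-1\le\bar z_1\le\cdots\le\bar z_{n-s}\}$. Its constraint matrix is $C^{n-s}$, whose first row $-\bar z_1\le1$ has right-hand side $1$, and LICQ holds. Lemma~2.1 of~\cite{han1997newton} then yields a neighborhood on which the multiplier support only shrinks and the active set only shrinks. This gives $\mathcal B^s(\by')\subseteq\mathcal B^s(\by)$, and hence $\widehat{\mathcal S}^{\,s}(\by')\subseteq\widehat{\mathcal S}^{\,s}(\by)$, together with the affine formula $\bz^*_{(s)}(\by')=\bz^*_{(s)}(\by)+\Pi_K(\by'-\by)$ for each $K\in\mathcal B^s(\by')$. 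Here $\Pi_K$ is the projector in~\eqref{eq:hinge-hat-s}; the shift $\rho\bs\sigma$ cancels in the difference.

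One point needs care. The constraint rows in $K$ include possibly row $1$, with nonzero right-hand side. Han--Sun's formula is affine, with the constant absorbed by evaluating at the reference point. Since the same $K$ is active at both points, $\bar\bz$ solves the equality-constrained projection onto $\{C_K\bar z=\text{rhs}_K\}$ at both points, and the difference is the orthogonal projector onto $\ker C_K$ applied to the data difference. Taking $N(\bb)$ as the intersection over the finitely many $s\in\llbracket k_0(\bb),k_1(\bb)\rrbracket$ of these neighborhoods, together with the earlier ones, completes the argument.

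The main obstacle is the bookkeeping of $s$ when $k_0,k_1$ change between $\bb$ and $\ba$. We must confirm that every $s$ admissible at $\ba$ is admissible at $\bb$, and that the tail data at $\bb$ for such $s$ lie within Han--Sun's neighborhood. We handle this by applying the lemma separately for each of the finitely many $s$.
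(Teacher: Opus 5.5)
Your proposal is correct and follows essentially the same route as the paper. You choose a neighborhood that preserves strict order gaps and the $-1$ threshold, so that $\mathcal P(\ba)\subseteq\mathcal P(\bb)$ and $\llbracket k_0(\ba),k_1(\ba)\rrbracket\subseteq\llbracket k_0(\bb),k_1(\bb)\rrbracket$. You then apply Han--Sun's Lemma~2.1 to the tail projection for each of the finitely many $s$ and conjugate by a common permutation, exactly as the paper does. Your explicit check of the identity block on $\llbracket k_0(\bb)+1,k_0(\ba)\rrbracket$ usefully spells out what the paper compresses into ``combining the prefix identities.''

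One small slip: near the reference point the multiplier support can only grow, since positive multipliers stay positive; it does not shrink. Growth is the direction needed for $\mathcal B^s(\by')\subseteq\mathcal B^s(\by)$, which you in any case take correctly from Han--Sun.
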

\begin{proof}
Let \(k_0\triangleq k_0(\bb)\) and \(k_1\triangleq k_1(\bb)\).
By Lemma~2.1(ii) of \cite{han1997newton}, for each \(s\in \llbracket k_0,k_1\rrbracket\) with \(s<n\), there exists \(\delta_s>0\) such that any \(\ba^s\in\R^{n-s}\) satisfying \(\|\ba^s-\vec{\bb}_{\llbracket s+1,n\rrbracket}\|_2<\delta_s\) also satisfies \(\mathcal B^s(\ba^s)\subseteq\mathcal B^s(\vec{\bb}_{\llbracket s+1,n\rrbracket})\) and, for all \(\widehat J_s\in\widehat{\mathcal S}^{\,s}(\ba^s)\),
\begin{equation}
\label{eq:hinge-tail-sensitivity}
    \bz^*_{(s)}(\ba^s)
    =
    \bz^*_{(s)}(\vec{\bb}_{\llbracket s+1,n\rrbracket})
    +
    \widehat J_s\bigl(\ba^s-\vec{\bb}_{\llbracket s+1,n\rrbracket}\bigr).
\end{equation}
When \(s=n\), \eqref{eq:hinge-tail-sensitivity} holds trivially by setting \(\delta_n=1\) and \(\widehat{\mathcal S}^{\,n}(\emptyset)=\{I_0\}\).
Let
\[
\begin{aligned}
\delta_{\rm ord}
&\triangleq \min\{\tfrac{\vec b_{i+1}-\vec b_i}{3}\mid i=1,\ldots,n-1,\ \vec b_{i+1}\ne\vec b_i\},\\
\delta_{\rm thr}
&\triangleq \min\{\tfrac{|\vec b_i+1|}{3}\mid i=1,\ldots,n,\ \vec b_i\ne -1\},
\end{aligned}
\]
where an empty minimum is understood as \(+\infty\). Set
\[
    \delta
    \triangleq 
    \min\{
        1,
        \delta_{\rm ord},
        \delta_{\rm thr},
        \min_{s\in\llbracket k_0,k_1\rrbracket}\delta_s
    \}.
\]
Observe that for any \(\ba\in N(\bb)\triangleq\{\bb+\bd\mid\|\bd\|_2<\delta\}\), the choice \(\delta\le\delta_{\rm ord}\) preserves all strict order gaps in \(\vec{\bb}\), and hence \(\mathcal P(\ba)\subseteq\mathcal P(\bb)\). The term \(\delta_{\rm thr}\) prevents all components of \(\vec{\bb}\) outside the \(-1\) segment of \(\vec{\bb}\) from crossing the threshold \(-1\). Hence
\begin{equation}
\label{eq:hinge-index-inclusion}
    s\in \llbracket k_0(\ba),k_1(\ba)\rrbracket
    \quad\Longrightarrow\quad
    s\in \llbracket k_0,k_1\rrbracket.
\end{equation}

Let \(H\in\mathcal J_{HS}(\ba)\) be arbitrary. By the definition of \(\mathcal J_{HS}(\ba)\), choose \(P\in\mathcal P(\ba)\), \(s\in \llbracket k_0(\ba),k_1(\ba)\rrbracket\), and \(J_s\in\mathcal S^s(\ba)\) such that \(H=P^\top\operatorname{BlkD}(I_s,J_s)P\). The inclusion \(\mathcal P(\ba)\subseteq\mathcal P(\bb)\) and implication~\eqref{eq:hinge-index-inclusion} give \(P\in\mathcal P(\bb)\) and \(s\in \llbracket k_0,k_1\rrbracket\). Hence \(P\ba=\vec{\ba}\), \(P\bb=\vec{\bb}\), and
\[
    \|\vec{\ba}_{\llbracket s+1,n\rrbracket}-\vec{\bb}_{\llbracket s+1,n\rrbracket}\|_2
    \le
    \|\vec{\ba}-\vec{\bb}\|_2
    =
    \|P(\ba-\bb)\|_2
    =
    \|\ba-\bb\|_2
    <
    \delta
    \le
    \delta_s.
\]
Since \(J_s\in\mathcal S^s(\ba)=\widehat{\mathcal S}^{\,s}(\vec{\ba}_{\llbracket s+1,n\rrbracket})\), formula~\eqref{eq:hinge-tail-sensitivity}, applied with \(\ba^s=\vec{\ba}_{\llbracket s+1,n\rrbracket}\), gives
\begin{equation}
\label{eq:hinge-tail-sensitivity-applied}
    \bz^*_{(s)}(\vec{\ba}_{\llbracket s+1,n\rrbracket})
    =
    \bz^*_{(s)}(\vec{\bb}_{\llbracket s+1,n\rrbracket})
    +
    J_s\bigl(\vec{\ba}_{\llbracket s+1,n\rrbracket}-\vec{\bb}_{\llbracket s+1,n\rrbracket}\bigr).
\end{equation}
Moreover, \(\mathcal S^s(\ba)\subseteq\mathcal S^s(\bb)\). By Proposition~\ref{prop:hinge-pava-solution}, the same splitting index \(s\) is admissible for both \(\vec{\ba}\) and \(\vec{\bb}\). Combining the prefix identities and tail representations from that proposition with \eqref{eq:hinge-tail-sensitivity-applied} yields
\[
    \bz^*(\vec{\ba})-\bz^*(\vec{\bb})
    =
    \operatorname{BlkD}(I_s,J_s)(\vec{\ba}-\vec{\bb}).
\]
Finally, using the permutation representation of \(\prox_{\rho f}\) established above, we obtain
\[
\begin{aligned}
    \prox_{\rho f}(\ba)-\prox_{\rho f}(\bb)
    =
    P^\top\bigl(\bz^*(\vec{\ba})-\bz^*(\vec{\bb})\bigr) 
    =
    H(\ba-\bb).
\end{aligned}
\]
Since \(\ba\) and \(H\in\mathcal J_{HS}(\ba)\) are arbitrary, the identity in the theorem follows. The facts \(\mathcal P(\ba)\subseteq\mathcal P(\bb)\), \(s\in \llbracket k_0,k_1\rrbracket\), and \(\mathcal S^s(\ba)\subseteq\mathcal S^s(\bb)\) also show, by \eqref{eq:hinge-jac-no-p} and \eqref{eq:hinge-jac}, that \(H\in\mathcal J_{HS}(\bb)\). Hence \(\mathcal J_{HS}(\ba)\subseteq\mathcal J_{HS}(\bb)\).
\end{proof}

\subsection{\texorpdfstring{The SC$^1$-Loss Case}{The SC1-Loss Case}}
We next assume that the individual loss is SC$^1$ and derive an \textit{HS--Jacobian} for $\prox_{\rho f}$. Throughout this subsection, we use \textit{HS--Jacobian} in an extended sense: its projector component follows the Han--Sun active-set construction, while its diagonal component is induced by generalized derivatives of \(l'\). We recast problem~\eqref{problem:pav-problem} as
\begin{equation}
\label{problem:smooth-pav-problem}
\begin{aligned}
\min_\bz  ~\sum_{i=1}^n \left(\rho\sigma_i l(z_i)+\frac{1}{2}( z_i-y_i )^2\right) \qquad
\mathrm{s.t.} \quad  B\bz \leq 0,
\end{aligned}
\end{equation}
where \(B\in\R^{(n-1)\times n}\) is the first-order difference matrix whose
\(i\)-th row is \(\bs{e}_i^\top-\bs{e}_{i+1}^\top\). For any
\(K\subseteq\llbracket 1,n-1\rrbracket\), let \(B_K\) denote the row submatrix of \(B\)
indexed by \(K\).
By the same LICQ argument, the Lagrange multiplier associated with the optimal solution of \eqref{problem:smooth-pav-problem} is unique.
For $\by=\vec{\bb}$, the Lagrangian of \eqref{problem:smooth-pav-problem} is
\[
L(\bz;\blambda) =  \rho \boldsymbol{\sigma}^\top \ell(\bz) + \frac{1}{2}\Vert \bz-\vec{\bb} \Vert^2 + \blambda^\top B \bz,
\]
yielding the KKT system
\begin{equation}
\label{eq:kkt-condition}
\left\{ \begin{aligned}
    &\nabla_{\bz} L(\bz;\blambda) =\rho\boldsymbol{\sigma} \circ \nabla \ell(\bz) +  \bz-\vec{\bb} + B^\top\blambda=0, \\
    &\blambda \circ (B\bz) = \boldsymbol{0}, \quad \blambda \geq 0, \quad B\bz \leq 0,
\end{aligned}\right.
\end{equation}
where $\circ$ denotes the Hadamard product.

Analogous to Definition~\ref{def:b-family-and-solution}, we characterize the index family for problem~\eqref{problem:smooth-pav-problem} as follows.

\begin{definition}
\label{def:sc1-b}
    For any \(\by \in \mathbb{R}^n\), let \(\bz^*(\by)\) and \(\blambda^*(\by)\) denote, respectively, the unique optimal solution and the unique optimal Lagrange multiplier of problem~\eqref{problem:smooth-pav-problem}. Define the active index set and the associated index family by
    \[
    \begin{aligned}
    I(\by) &\triangleq  \left\{i \in \llbracket 1,n-1\rrbracket \mid z_i^*(\by) = z_{i+1}^*(\by)\right\},\\
    \mathcal{B}(\by)&\triangleq
    \left\{K\subseteq\llbracket 1,n-1\rrbracket\mid
    \operatorname{supp}(\blambda^*(\by))\subseteq K\subseteq I(\by)
    \right\}.
    \end{aligned}
    \]
\end{definition}

Using \(\mathcal{B}(\by)\) from Definition~\ref{def:sc1-b}, define the projection-matrix family
\begin{equation}
\label{eq:mathcal-q}
    \mathcal{Q}(\by)\triangleq
    \{ I_n - B_K^\top (B_K B_K^\top)^{-1}B_K \mid K\in \mathcal{B}(\by)\},
\end{equation}
and the diagonal derivative family
\begin{equation}
\label{eq:mathcal-a}
    \mathcal{A}(\by)\triangleq
    \{\operatorname{diag}(\rho\sigma_1h_1,\dots,\rho\sigma_nh_n)\mid h_i \in \partial l'(z_i^*(\by)),~ i\in\llbracket 1,n\rrbracket\}.
\end{equation}
We note that $B_K$ has full row rank, and every \(Q\in\mathcal Q(\by)\) is an orthogonal projector. Moreover, the convexity of $l$ implies that \(l'\) is nondecreasing. Since \(l'\) is semismooth, it is locally Lipschitz, and \cite[Proposition~2.3(a)]{jiang1995local} implies \(h_i\ge0\) for \(h_i\in\partial l'(z_i^*(\by))\). Consequently, for every \(P\in\mathcal P(\bx)\), each \(A\in\mathcal A(P\bx)\) is positive semidefinite.
Finally, define the SC$^1$-loss \textit{HS--Jacobian} set by
\begin{equation}
\label{eq:sc1-jac}
    \mathcal{J}_{HS}(\bx) \triangleq
    \{P^\top (I_n+QA)^{-1} Q P \mid  P \in \mathcal{P}(\bx),~Q \in \mathcal{Q}(P\bx),~A \in \mathcal{A}(P\bx)\}.
\end{equation}
We first verify that $I_n+QA$ in the above definition is indeed nonsingular. Note that any $A\in {\cal A}(P\bx)$ is positive semidefinite since \(\rho\sigma_i\ge0\) and \(h_i\ge0\). Now, suppose $(I_n+QA)\bx=0$. Then \(\bx=-QA\bx=Q(-A\bx)\), and hence \(Q\bx=Q^2(-A\bx)=Q(-A\bx)=\bx\). Consequently, $\|\bx\|^2 = -\bx^\top QA\bx = -\bx^\top A\bx \le 0$, which forces $\bx=0$. 
For every admissible choice of \(P,Q,A\) in \eqref{eq:sc1-jac}, each element of $ \mathcal{J}_{HS}(\bx)$ is symmetric positive semidefinite since
\begin{equation}
\label{eq:sc1-jac-psd}
P^\top (I_n+QA)^{-1}Q P
= P^\top Q (I_n+Q AQ)^{-1}Q P \succeq0.
\end{equation}
The positive semidefiniteness holds since  \(Q\) is an orthogonal projector and \(I_n+QAQ\succ0\).
To justify the equality, fix any \(\bu\) and set \(\bv=(I_n+QA)^{-1}Q\bu\). Then \(\bv=Q(\bu-A\bv)\in\operatorname{Range}(Q)\), so \(Q\bv=\bv\). Hence \((I_n+QAQ)\bv=Q\bu\), which gives \((I_n+QA)^{-1}Q=(I_n+QAQ)^{-1}Q\). Since \(Q=Q^\top=Q^2\) and \(Q\) commutes with \(I_n+QAQ\), we further have \((I_n+QAQ)^{-1}Q=Q (I_n+QAQ)^{-1}Q\).
Moreover,
\begin{equation}
\label{eq:sc1-jac-norm}
\|(I_n+QA)^{-1}Q\|
=\|(I_n+QAQ)^{-1}Q\|\le 1.
\end{equation}

The next result extends the resulting \textit{HS--Jacobian} characterization for SC$^1$ losses. Its proof combines the Han--Sun active-set construction with the semismooth expansion of \(l'\).

\begin{theorem}[\textit{HS--Jacobian} for the SC$^1$ loss]
\label{thm:sc1-jacobian}
Fix any reference point $\bb\in\mathbb R^n$. 
    Suppose Assumption~\ref{assp:l-nondecreasing} holds and the individual loss is an SC$^1$ function. Then the following statements hold.
    \begin{enumerate}
        \item There exists a neighborhood $N(\bb)$ of $\bb$ such that for any $\ba \in N(\bb)$ and any $P\in \mathcal{P}(\ba)$ defined in \eqref{eq:perm-mat}, we have $\mathcal{B}(P \ba)\subseteq \mathcal{B}(P \bb)$ and $\mathcal{Q}(P \ba)\subseteq \mathcal{Q}(P \bb)$.
        \item
        The set-valued mapping $\mathcal{J}_{HS}$ defined in \eqref{eq:sc1-jac} is upper semicontinuous at $\bb$, and for any $\ba$ sufficiently close to $\bb$,
        \[\prox_{\rho f}(\ba) = \prox_{\rho f}(\bb) + H(\ba-\bb)+o(\|\ba-\bb\|), \quad \forall H \in \mathcal{J}_{HS}(\ba).\]
        Furthermore, if $l'$ is strongly semismooth with respect to $\partial l'$, the expansion tightens to $O(\|\ba-\bb\|^2)$.
    \end{enumerate}
\end{theorem}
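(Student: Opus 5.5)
The argument works in sorted coordinates. Part~1 is reduced to a stability statement for the active set and multiplier support of \eqref{problem:smooth-pav-problem}. Part~2 then combines a Han--Sun projection identity with the semismooth expansion of \(l'\).

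\emph{Step 1: continuity of the sorted problem.} Choose \(\delta\le\delta_{\rm ord}\) as in the proof of Theorem~\ref{thm:hinge-loss-jac}. Then for \(\ba\in N(\bb)\) we have \(\mathcal P(\ba)\subseteq\mathcal P(\bb)\), so every \(P\in\mathcal P(\ba)\) satisfies \(P\ba=\vec\ba\) and \(P\bb=\vec\bb\). The solution map \(\by\mapsto\bz^*(\by)\) of \eqref{problem:smooth-pav-problem} is a proximal mapping, hence nonexpansive. By LICQ, the multiplier \(\blambda^*(\by)\) is unique. I expect it to be continuous: \(B^\top\blambda^*=\vec\bb-\bz^*-\rho\boldsymbol\sigma\circ\nabla\ell(\bz^*)\), \(B^\top\) is injective, and \(l'\) is continuous.

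\emph{Step 2: Part~1.} Strict inequalities \(z^*_i(\vec\bb)<z^*_{i+1}(\vec\bb)\) and positive multipliers \(\lambda^*_i(\vec\bb)>0\) both persist under small perturbations. Hence \(I(\vec\ba)\subseteq I(\vec\bb)\) and \(\operatorname{supp}\blambda^*(\vec\bb)\subseteq\operatorname{supp}\blambda^*(\vec\ba)\). Therefore any \(K\) with \(\operatorname{supp}\blambda^*(\vec\ba)\subseteq K\subseteq I(\vec\ba)\) also lies in \(\mathcal B(\vec\bb)\), and \(\mathcal Q(P\ba)\subseteq\mathcal Q(P\bb)\) follows at once.

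\emph{Step 3: the key identity.} Fix \(K\in\mathcal B(\vec\ba)\subseteq\mathcal B(\vec\bb)\) and write \(\bz^a=\bz^*(\vec\ba)\), \(\bz^b=\bz^*(\vec\bb)\). Both satisfy \(B_K\bz=0\), since \(K\subseteq I\) for each. So \(\Delta\bz=\bz^a-\bz^b\) lies in \(\ker B_K=\operatorname{Range}(Q)\), and \(Q\Delta\bz=\Delta\bz\). In both KKT systems the multipliers are supported in \(K\), so \(QB^\top\blambda^*=0\) for both. Applying \(Q\) to the difference of the KKT equations gives
\[
\Delta\bz+Q\rho\boldsymbol\sigma\circ\bigl(\nabla\ell(\bz^a)-\nabla\ell(\bz^b)\bigr)=Q(\vec\ba-\vec\bb).
\]
Take \(A=\operatorname{diag}(\rho\sigma_ih_i)\in\mathcal A(\vec\ba)\) with \(h_i\in\partial l'(z^a_i)\). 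Semismoothness of \(l'\) at \(z^b_i\), together with Lipschitz continuity of \(\bz^*\), gives \(l'(z^a_i)-l'(z^b_i)=h_i\Delta z_i+o(\|\vec\ba-\vec\bb\|)\). The remainder is \(O(\|\cdot\|^2)\) in the strongly semismooth case. Substituting yields \((I_n+QA)\Delta\bz=Q(\vec\ba-\vec\bb)+o(\cdot)\). By the nonsingularity shown before the theorem and \eqref{eq:sc1-jac-norm}, this becomes \(\Delta\bz=(I_n+QA)^{-1}Q(\vec\ba-\vec\bb)+o(\cdot)\). The inverse must be bounded uniformly; \(\|(I_n+QA)^{-1}\|\) is bounded because \(Q\) ranges over finitely many projectors and \(A\) over a bounded PSD family, so a compactness argument applies. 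Lemma~\ref{lem:prox-permutation} with \(P\in\mathcal P(\ba)\subseteq\mathcal P(\bb)\) then transfers the expansion to \(\prox_{\rho f}(\ba)-\prox_{\rho f}(\bb)=H(\ba-\bb)+o(\cdot)\).

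\emph{Step 4: upper semicontinuity.} By Steps 1--2, \(P\) and \(Q\) eventually lie in the finite sets \(\mathcal P(\bb)\) and \(\mathcal Q(\vec\bb)\). Clarke's \(\partial l'\) is upper semicontinuous, and \(\bz^*\) is continuous, so limits of \(h_i\in\partial l'(z^a_i)\) lie in \(\partial l'(z^b_i)\). Continuity of \((Q,A)\mapsto(I_n+QA)^{-1}Q\) then gives the claim.

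The main obstacle is Step 1's multiplier continuity together with the uniformity of the \(o(\cdot)\) over choices of \(h_i\). I would handle the latter using the standard equivalent form of semismoothness, \(\sup_{h\in\partial l'(z+d)}|l'(z+d)-l'(z)-hd|=o(|d|)\).
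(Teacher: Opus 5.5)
Your proposal is correct and follows essentially the same route as the paper's proof. The steps match: $\mathcal P(\ba)\subseteq\mathcal P(\bb)$; continuity of $\bz^*$ and $\blambda^*$, giving $I(\vec{\ba})\subseteq I(\vec{\bb})$ and $\operatorname{supp}\blambda^*(\vec{\bb})\subseteq\operatorname{supp}\blambda^*(\vec{\ba})$; the projection identity $\bz^*(\by)=Q\by-\rho Q(\boldsymbol{\sigma}\circ\nabla\ell(\bz^*(\by)))$ combined with the componentwise semismooth expansion of $l'$; and upper semicontinuity from the finiteness of the admissible $P,Q$ together with upper semicontinuity of $\partial l'$. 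Your deviations are minor and valid: deriving multiplier continuity from injectivity of $B^\top$, i.e., $\blambda^*(\by)=(BB^\top)^{-1}B\bigl(\by-\bz^*(\by)-\rho\boldsymbol{\sigma}\circ\nabla\ell(\bz^*(\by))\bigr)$, is more direct than the paper's subsequence argument, and your compactness bound on $(I_n+QA)^{-1}$ is not needed, since the remainder carries a factor $Q$ and \eqref{eq:sc1-jac-norm} bounds it directly.
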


\begin{proof}
        
       (1) There exists a neighborhood \(N_0(\bb)\) such that
        \(\mathcal P(\ba)\subseteq\mathcal P(\bb)\) for all
        \(\ba\in N_0(\bb)\). Hence \(P\ba=\vec{\ba}\) and \(P\bb=\vec{\bb}\)
        whenever \(P\in\mathcal P(\ba)\). Fix \(P\in\mathcal P(\bb)\) and set
        \(\bar{\by}=P\bb\). For problem~\eqref{problem:smooth-pav-problem},
        the solution map \(\bz^*(\cdot)\) is the proximal mapping of
        \(\rho\boldsymbol{\sigma}^{\top}\ell+\delta_{\{B\bz\le0\}}\), and is
        therefore continuous. For any active set \(K\), the gradients of the
        active constraints are the rows of \(B_K\). Since every such row
        submatrix of the first-order difference matrix has full row rank, LICQ
        holds. Hence the associated multiplier is unique.
        Moreover, \(\blambda^*(\cdot)\) is continuous at \(\bar{\by}\). Indeed,
        along any sequence \(\by^j\to\bar{\by}\), the finiteness of active sets
        implies that each subsequence admits a further subsequence with a fixed
        active set \(K\). On this further subsequence, the stationarity equation,
        the full row rank of \(B_K\), and
        \(\bz^*(\by^j)\to\bz^*(\bar{\by})\) yield bounded multipliers. Every
        multiplier limit satisfies the KKT system at \(\bar{\by}\) and hence, by
        multiplier uniqueness, equals \(\blambda^*(\bar{\by})\), which proves the
        claimed continuity.

        By continuity of \(\bz^*(\cdot)\) and \(\blambda^*(\cdot)\), and by
        finiteness of \(\mathcal P(\bb)\), we can shrink the neighborhood
        uniformly so that, for all \(\ba\in N(\bb)\) and
        \(P\in\mathcal P(\ba)\),
        \[
        \operatorname{supp}(\blambda^*(P\bb))
        \subseteq
        \operatorname{supp}(\blambda^*(P\ba)),
        \qquad
        I(P\ba)\subseteq I(P\bb).
        \]
        Thus, for any \(K\in\mathcal B(P\ba)\),
        \[
        \operatorname{supp}(\blambda^*(P\bb))
        \subseteq
        \operatorname{supp}(\blambda^*(P\ba))
        \subseteq K\subseteq I(P\ba)\subseteq I(P\bb),
        \]
        which gives \(K\in\mathcal B(P\bb)\). This proves
        \(\mathcal B(P\ba)\subseteq\mathcal B(P\bb)\), and
        \(\mathcal Q(P\ba)\subseteq\mathcal Q(P\bb)\) follows from
        \eqref{eq:mathcal-q}.

        (2) Fix any $\ba \in N(\bb)$ and any $P \in \mathcal{P}(\ba)$. Then $P\ba=\vec{\ba}$ and $P\bb=\vec{\bb}$. Let $Q \in \mathcal{Q}(\vec{\ba})$ and $A \in \mathcal{A}(\vec{\ba})$ be arbitrary. By part (1), there exists an index set $K \in \mathcal{B}(\vec{\ba}) \subseteq \mathcal{B}(\vec{\bb})$ such that $Q = I_n - B_K^\top (B_K B_K^\top)^{-1}B_K$. For each $\by \in \{\vec{\ba},\vec{\bb}\}$, since $K \in \mathcal{B}(\by)$, we have \(\operatorname{supp}(\blambda^*(\by)) \subseteq K \subseteq I(\by)\). Let $\blambda_K^*(\by)$ denote the subvector indexed by $K$. Using the KKT conditions \eqref{eq:kkt-condition} and the active relation $B_K \bz^*(\by)=\bs{0}$, we obtain $\bz^*(\by)=\by-\rho(\boldsymbol{\sigma}\circ\nabla\ell(\bz^*(\by)))-B_K^\top\blambda_K^*(\by)$ and $B_K\by=\rho B_K(\boldsymbol{\sigma}\circ\nabla\ell(\bz^*(\by)))+B_KB_K^\top\blambda_K^*(\by)$. Eliminating $\blambda_K^*(\by)$ yields the projection formula
        \begin{equation}
        \label{eq:sc1-q-representation}
        \bz^*(\by)=Q\by-\rho Q(\boldsymbol{\sigma}\circ\nabla\ell(\bz^*(\by))), \qquad \by \in \{\vec{\ba},\vec{\bb}\}.
        \end{equation}
        Subtracting the two identities in \eqref{eq:sc1-q-representation} and applying Definition~\ref{def:semismooth} componentwise to the SC$^1$ loss in Definition~\ref{def:sc1-fun}, we deduce that
        \begin{equation}
        \label{eq:sc1-o-equation}
        \bz^*(\vec{\ba}) - \bz^*(\vec{\bb})= Q(\vec{\ba}-\vec{\bb}) - Q \left(A (\bz^*(\vec{\ba}) - \bz^*(\vec{\bb})) +  o(\|\bz^*(\vec{\ba})-\bz^*(\vec{\bb})\|)\right).
        \end{equation}
Rearranging \eqref{eq:sc1-o-equation}, and using \eqref{eq:sc1-jac-norm} and the nonexpansiveness of \(\bz^*(\cdot)\), yields
        \[
        \bz^*(\vec{\ba})-\bz^*(\vec{\bb}) = (I_n+QA)^{-1}Q(\vec{\ba}-\vec{\bb}) + o(\|\vec{\ba}-\vec{\bb}\|).\] 
        Since $\vec{\ba}-\vec{\bb}=P(\ba-\bb)$ and Lemma~\ref{lem:prox-permutation} gives $\prox_{\rho f}(\ba)=P^\top\bz^*(\vec{\ba})$ and $\prox_{\rho f}(\bb)=P^\top\bz^*(\vec{\bb})$, it follows that 
             \[
        \prox_{\rho f}(\ba)-\prox_{\rho f}(\bb)=P^\top(I_n+QA)^{-1}QP(\ba-\bb)+o(\|\ba-\bb\|).
    \] 
    Because $P \in \mathcal{P}(\ba)$, $Q \in \mathcal{Q}(P\ba)$, and $A \in \mathcal{A}(P\ba)$ were arbitrary, the above relation proves that $\prox_{\rho f}(\ba)=\prox_{\rho f}(\bb)+H(\ba-\bb)+o(\|\ba-\bb\|)$ for all $H \in \mathcal{J}_{HS}(\ba)$.

We next verify the upper semicontinuity of $\mathcal{J}_{HS}$ at $\bb$. It suffices to show that the outer limit of $\mathcal{J}_{HS}$ at $\bb$ is contained in $\mathcal{J}_{HS}(\bb)$. Let $\ba^k\to\bb$, $H^k\in\mathcal{J}_{HS}(\ba^k)$, and $H^k\to H$. For all sufficiently large $k$, the first assertion gives $\mathcal{P}(\ba^k)\subseteq\mathcal{P}(\bb)$ and $\mathcal{Q}(P^k\ba^k)\subseteq\mathcal{Q}(P^k\bb)$, where
\[
H^k=(P^k)^\top(I_n+Q^kA^k)^{-1}Q^kP^k,\quad
P^k\in\mathcal{P}(\ba^k),\quad
Q^k\in\mathcal{Q}(P^k\ba^k),\quad
A^k\in\mathcal{A}(P^k\ba^k).
\]
Since the admissible permutation matrices and projection matrices form finite sets, we may pass to a subsequence along which $P^k=P$ and $Q^k=Q$. Then $P\in\mathcal{P}(\bb)$ and $Q\in\mathcal{Q}(P\bb)$. The mapping $\by\mapsto\bz^*(\by)$ is nonexpansive, because it is the proximal mapping of a proper closed convex function. Hence the upper semicontinuity and compactness of $\partial l'$ imply that $\by\rightrightarrows\mathcal{A}(\by)$ is compact-valued and upper semicontinuous. Passing to a further subsequence if necessary, we have $A^k\to A$ for some $A\in\mathcal{A}(P\bb)$. Since $I_n+QA$ is nonsingular and the map $A\mapsto P^\top(I_n+QA)^{-1}QP$ is continuous, we obtain
\[
H=P^\top(I_n+QA)^{-1}QP\in\mathcal{J}_{HS}(\bb).
\]
This proves the upper semicontinuity of $\mathcal{J}_{HS}$ at $\bb$.

        Finally, if \(l'\) is strongly semismooth with respect to \(\partial l'\), then the componentwise remainder in \eqref{eq:sc1-o-equation} improves from \(o(\|\bz^*(\vec{\ba})-\bz^*(\vec{\bb})\|)\) to \(O(\|\bz^*(\vec{\ba})-\bz^*(\vec{\bb})\|^2)\). Applying \eqref{eq:sc1-jac-norm} and the nonexpansiveness of \(\bz^*(\cdot)\) gives the strengthened \(O(\|\ba-\bb\|^2)\) expansion.
\end{proof}

\section{\texorpdfstring{Semismooth Newton Method for the \textnormal{\textsc{ripALM}} Subproblem}{Semismooth Newton Method for the ripALM Subproblem}}
\label{sec:ssn-linear-algebra}

We now complete the inner solver for the \textnormal{\textsc{ripALM}} \(\bu\)-subproblem~\eqref{problem:inexact-u-subproblem}. The first subsection uses \eqref{eq:nabla-phik} and the results of Sections~\ref{sec:prox-pava}--\ref{sec:hs-jacobian} to obtain a generalized Jacobian of \(\nabla\varphi_k\), state the \textnormal{\textsc{Ssn}} iteration, and record its convergence guarantee. The second subsection derives the linear algebra used to compute the Newton directions efficiently.

\subsection{Generalized Jacobian and Newton Iteration}

We first record the generalized Jacobian for the gradient mapping induced by the proximal results above. The representation \eqref{eq:nabla-phik} decomposes \(\nabla\varphi_k\) into affine terms, \(\prox_{\rho_k f}\), and \(\prox_{\rho_k g}\). The \textit{HS--Jacobian} characterizations in Section~\ref{sec:hs-jacobian} give the required first-order expansion for \(\prox_{\rho_k f}\). Moreover, since \(g=\lambda\|\bw\|_1\), the mapping \(\operatorname{prox}_{\rho_k g}\) is piecewise affine and hence strongly semismooth. A chain-rule argument then yields the following semismoothness result for \(\nabla\varphi_k\). For notational simplicity, throughout this section we fix an outer iteration \(k\) and write
\((\bw,\bz,\rho,\beta)=(\bw_k,\bz_k,\rho_k,\beta_k)\),
while keeping the previous dual iterate as \(\bu_k\).

\begin{theorem}
\label{thm:ssn-all}
    Suppose Assumption~\ref{assp:l-nondecreasing} holds. Fix \(\bw,\bz,\bu_k\), \(\rho>0\), and \(\beta>0\). Define the set-valued mapping
    \begin{equation}
    \label{eq:nabla-phi-jac}
    \mathcal{K}_{\nabla \varphi}(\bu;\bw,\bz)
    \triangleq  \left\{ \rho \bigl(D W D^\top + V \bigr) + \frac{\beta}{\rho} I_n
    \;\middle|\;
    \begin{aligned}
    W &\in \partial \prox_{\rho g}(\bw - \rho D^\top \bu),\\
    V &\in \mathcal{J}_{HS}(\rho \bu + \bz)
    \end{aligned}
    \right\},
    \end{equation}
    where $\mathcal{J}_{HS}$ is given by \eqref{eq:hinge-jac} for the hinge loss and by \eqref{eq:sc1-jac} for general SC$^1$ losses. Then the following statements hold.
    \begin{enumerate}
        \item If the individual loss is an SC$^1$ function, then $\nabla \varphi$ defined in~\eqref{eq:nabla-phik} is G-semismooth at $\bu$ with respect to $\mathcal{K}_{\nabla \varphi}$.
        \item If the individual loss is the hinge loss, the smoothed hinge loss, or a $C^2$ function with a locally Lipschitz continuous Hessian, then $\nabla \varphi$ is strongly semismooth at $\bu$ with respect to $\mathcal{K}_{\nabla \varphi}$.
    \end{enumerate}
\end{theorem}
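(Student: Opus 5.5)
Starting from the representation \eqref{eq:nabla-phik}, I write
\[
\nabla\varphi(\bu+\bd)-\nabla\varphi(\bu)
=\bigl[\prox_{\rho f}(\rho\bu+\rho\bd+\bz)-\prox_{\rho f}(\rho\bu+\bz)\bigr]
-D\bigl[\prox_{\rho g}(\bw-\rho D^\top\bu-\rho D^\top\bd)-\prox_{\rho g}(\bw-\rho D^\top\bu)\bigr]
+\tfrac{\beta}{\rho}\bd .
\]
The affine term is handled exactly by \(\frac{\beta}{\rho}I_n\). Each of the two proximal differences will be expanded at the perturbed point using an element of the corresponding generalized Jacobian evaluated there. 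For the \(f\)-term, set \(\bb=\rho\bu+\bz\) and \(\ba=\bb+\rho\bd\). For every \(V\in\mathcal J_{HS}(\ba)\), Theorem~\ref{thm:hinge-loss-jac} (hinge) gives the exact identity \(\prox_{\rho f}(\ba)-\prox_{\rho f}(\bb)=\rho V\bd\) once \(\|\rho\bd\|\) is small. Theorem~\ref{thm:sc1-jacobian}(2) (SC\(^1\)) gives the same identity up to \(o(\|\bd\|)\), improved to \(O(\|\bd\|^2)\) when \(l'\) is strongly semismooth.

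For the \(g\)-term, \(\prox_{\rho g}\) is componentwise soft-thresholding. It is piecewise affine, hence strongly semismooth with respect to its Clarke Jacobian \(\partial\prox_{\rho g}\). Writing \(\bx=\bw-\rho D^\top\bu\) and \(\bx'=\bx-\rho D^\top\bd\), this yields \(\prox_{\rho g}(\bx')-\prox_{\rho g}(\bx)=-\rho W D^\top\bd+O(\|\bd\|^2)\) for all \(W\in\partial\prox_{\rho g}(\bx')\). In fact the remainder is exactly zero for small \(\bd\), since every Clarke element at a nearby point of a piecewise affine separable map is a diagonal \(0/1\) matrix compatible with the local linear pieces. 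Multiplying by \(-D\) gives \(\rho DWD^\top\bd\). Summing the three contributions shows
\[
\nabla\varphi(\bu+\bd)-\nabla\varphi(\bu)-K\bd=o(\|\bd\|)\ \text{(resp. }O(\|\bd\|^2))
\quad\text{for all } K\in\mathcal K_{\nabla\varphi}(\bu+\bd),
\]
because \(\mathcal K_{\nabla\varphi}(\bu+\bd)\) is built from exactly \(W\in\partial\prox_{\rho g}(\bx')\) and \(V\in\mathcal J_{HS}(\ba)\).

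Next I verify the admissibility requirements in Definition~\ref{def:semismooth}. \(\nabla\varphi\) is Lipschitz because both proximal maps are nonexpansive. \(\mathcal K_{\nabla\varphi}\) must be compact-valued and upper semicontinuous. For \(\partial\prox_{\rho g}\) this is standard for Clarke Jacobians. For \(\mathcal J_{HS}\) in the SC\(^1\) case it is Theorem~\ref{thm:sc1-jacobian}(2). In the hinge case \(\mathcal J_{HS}\) is a finite set, and the inclusion \(\mathcal J_{HS}(\ba)\subseteq\mathcal J_{HS}(\bb)\) from Theorem~\ref{thm:hinge-loss-jac} gives upper semicontinuity. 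Composing with the continuous affine maps \(\bu\mapsto\rho\bu+\bz\) and \(\bu\mapsto\bw-\rho D^\top\bu\), and taking the Minkowski-sum-type combination \(\rho(DWD^\top+V)+\frac\beta\rho I\), preserves both properties. This proves part (1) for SC\(^1\) losses.

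For part (2), strong semismoothness additionally requires directional differentiability of \(\nabla\varphi\). For the hinge and smoothed hinge losses, \(f\) is piecewise linear--quadratic, as noted after Assumption~\ref{assp:error-bound}. Hence \(\prox_{\rho f}\) is piecewise affine by \cite[Proposition~12.30]{rockafellar2009variational}, and therefore directionally differentiable. For the hinge loss the remainder is even zero. The smoothed hinge loss has \(l'\) piecewise linear, so \(l'\) is strongly semismooth and Theorem~\ref{thm:sc1-jacobian} gives the \(O(\|\bd\|^2)\) bound. For \(C^2\) losses with locally Lipschitz Hessian, \(l'\) is \(C^1\) with Lipschitz derivative, so it is strongly semismooth with \(\partial l'=\{l''\}\). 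The \(O(\|\bd\|^2)\) remainder then again follows from Theorem~\ref{thm:sc1-jacobian}.

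The main obstacle is directional differentiability of \(\prox_{\rho f}\) in the \(C^2\) case, since it is not piecewise affine. I would first try to invoke the known fact that the proximal mapping of a closed convex function is directionally differentiable when the composite \(\rho\boldsymbol\sigma^\top\ell\) restricted to the order cone is \(C^2\). Concretely, the inner problem \eqref{problem:smooth-pav-problem} is a strongly convex program with \(C^2\) data and polyhedral constraints satisfying LICQ, so its solution map is directionally differentiable by standard sensitivity analysis of parametric nonlinear programs. Composing with the locally constant permutation from Lemma~\ref{lem:prox-permutation} would then transfer this property to \(\prox_{\rho f}\).
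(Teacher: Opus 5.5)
Your proposal is correct and follows essentially the same route as the paper. It uses the HS--Jacobian expansions of Theorems~\ref{thm:hinge-loss-jac} and~\ref{thm:sc1-jacobian} for the $f$-term, piecewise affinity of soft-thresholding for the $g$-term, upper semicontinuity and compactness for $\mathcal K_{\nabla\varphi}$, and directional differentiability via the piecewise linear--quadratic structure (hinge, smoothed hinge) or parametric NLP sensitivity ($C^2$ case). The only difference is that you write out the affine chain rule explicitly instead of citing it.

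One small wording fix: near points with ties the permutation is not locally constant. What holds is that $\mathcal P(\bb+t\bd)$ is constant for all small $t>0$ and contained in $\mathcal P(\bb)$, and that is all you need to transfer directional differentiability from the ordered solution map to $\prox_{\rho f}$.
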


\begin{proof}
If the individual loss is an SC$^1$ function, then Theorem~\ref{thm:sc1-jacobian} shows that $\prox_{\rho f}$ is G-semismooth at $\rho\bu+\bz$ with respect to $\mathcal{J}_{HS}$. Moreover, $\prox_{\rho g}$ is piecewise affine and hence strongly semismooth, in particular G-semismooth, with respect to $\partial \prox_{\rho g}(\cdot)$. Since both $\mathcal{J}_{HS}$ and $\partial \prox_{\rho g}(\cdot)$ are upper semicontinuous with compact images, the same holds for $\mathcal{K}_{\nabla \varphi}$ in \eqref{eq:nabla-phi-jac}. The representation 
\[\nabla \varphi(\bu)=\prox_{\rho f}(\rho\bu+\bz)-D\prox_{\rho g}(\bw-\rho D^\top\bu)+\tfrac{\beta}{\rho}(\bu-\bu_k)-\bc\] then yields, by a standard chain rule for G-semismooth mappings analogous to \cite[Theorem~7.5.17]{facchinei2003finite}, that $\nabla \varphi$ is G-semismooth at $\bu$ with respect to $\mathcal{K}_{\nabla \varphi}$.

For the stronger claim, we first verify the \textit{HS--Jacobian} \(O(\|\ba-\bb\|^2)\) expansion of \(\prox_{\rho f}\) with respect to the same \(\mathcal{J}_{HS}\) entering \(\mathcal{K}_{\nabla\varphi}\). This expansion follows from Theorem~\ref{thm:hinge-loss-jac} for the hinge loss and from the strengthened part of Theorem~\ref{thm:sc1-jacobian} for the smoothed hinge and \(C^2\) cases, since \(l'\) is respectively piecewise affine or strongly semismooth.

It remains to verify directional differentiability in the cases where the statement claims strong semismoothness. For the hinge and smoothed hinge losses, \(f\) is piecewise linear--quadratic by the finite order-cone and scalar-breakpoint partitions. Hence Proposition~12.30 of~\cite{rockafellar2009variational} implies that \(\prox_{\rho f}\) is piecewise affine and thus directionally differentiable. For \(C^2\) losses with locally Lipschitz continuous Hessian, directional differentiability of the ordered proximal solution mapping follows from~\cite[Section~7.3 and Proposition~7.1]{bonnans1998sensitivity}. Combining these directional differentiability facts with \textit{HS--Jacobian} \(O(\|\ba-\bb\|^2)\) expansion of \(\prox_{\rho f}\) with respect to the same \(\mathcal{J}_{HS}\)
shows that \(\prox_{\rho f}\) is strongly semismooth with respect to \(\mathcal{J}_{HS}\) for the hinge loss, the smoothed hinge loss, and the stated \(C^2\) losses. The same affine chain rule, together with the piecewise affine \(\prox_{\rho g}\) and the affine term \(\frac{\beta}{\rho}(\bu-\bu_k)\), then gives the claimed strong semismoothness of \(\nabla\varphi\) with respect to \(\mathcal{K}_{\nabla\varphi}\).
\end{proof}

Before stating the algorithm, we record the descent observation used by the Armijo line search.
For every selected \(U\in\mathcal K_{\nabla\varphi}(\bu;\bw,\bz)\), we have
\[
U=\rho(DWD^\top+V)+\frac{\beta}{\rho}I_n\succeq \frac{\beta}{\rho}I_n,
\]
since \(W\succeq0\) for the soft-thresholding mapping \(\prox_{\rho g}\) and \(V\succeq0\) for the selected \textit{HS--Jacobian} element, which follows from the projection representation in Theorem~\ref{thm:hinge-loss-jac} for the hinge loss and from \eqref{eq:sc1-jac-psd} for the SC\(^1\) loss.
Consequently, the exact Newton direction is a descent direction whenever \(\nabla\varphi(\bu)\ne0\).
The residual parameter \(c_g\) in \eqref{eq:ssn-subproblem} is chosen sufficiently small so that the computed inexact Newton direction remains a descent direction; this direction is then globalized by the Armijo line search.
Combining this observation with the generalized Jacobian \eqref{eq:nabla-phi-jac}, we outline the \textnormal{\textsc{Ssn}} method in Algorithm~\ref{alg:ssn-dual}.

\begin{algorithm}[t]
\caption{A semismooth Newton method for the $\bu$-subproblem \eqref{problem:inexact-u-subproblem}}
\label{alg:ssn-dual}
\begin{algorithmic}[1]
\Statex \textbf{Input:} $\bu^0=\bu_k, \bw=\bw_k, \bz=\bz_k, \rho > 0,\theta\in(0,1), \alpha \in (0,1], c_g \in (0,1)$ sufficiently small, $c_l \in (0, 1/2)$
\STATE Set $i=0$
\WHILE{$\bu^{i}$ does not satisfy the inexactness criterion in Algorithm~\ref{alg:rALM}}
    \STATE Select $U^i \in \mathcal{K}_{\nabla \varphi}(\bu^i;\bw,\bz)$ from \eqref{eq:nabla-phi-jac}, and find $\bv^i$ satisfying
    \vspace{-0.7\baselineskip}
    \begin{equation}
    \label{eq:ssn-subproblem}
       \|U^i \bv^i + \nabla \varphi (\bu^i)\| \leq \min(c_{g}, \|\nabla \varphi (\bu^i)\|^{1+\alpha})
    \end{equation}
    \STATE Set $\eta^i = \theta^{m^i}$, where $m^i$ is the smallest nonnegative integer $m$ satisfying
    $\varphi (\bu^i + \theta^m \bv^i) \leq \varphi (\bu^i) + c_l \theta^m \langle \nabla \varphi (\bu^i), \bv^i \rangle$
    \STATE Set $\bu^{i+1} = \bu^i + \eta^i \bv^i$
    \STATE $i = i + 1$
\ENDWHILE
\end{algorithmic}
\end{algorithm}

The following convergence guarantee follows from standard line-search semismooth Newton theory~\cite[Proposition~3.3 and Theorem~3.4]{zhao2010newton}, \cite[Theorem~3]{li2018efficiently}. The global convergence part uses the smooth strong convexity of \(\varphi\) and the Armijo line search, while the local rate follows from the G-semismoothness in Theorem~\ref{thm:ssn-all} and the uniform nonsingularity of the Newton matrices via the Newton-map convergence criterion~\cite[Theorem~4]{klatte2018approximations}.

\begin{theorem}
Suppose that Assumption~\ref{assp:l-nondecreasing} holds, that the individual loss is either the hinge loss or an SC\(^1\) function, and that the parameter \(c_g\) in Algorithm~\ref{alg:ssn-dual} is chosen sufficiently small. Let \(\overline{\bu}_{k+1}\) be the unique minimizer of \eqref{problem:inexact-u-subproblem}. Then the sequence \(\{\bu^i\}\) generated by Algorithm~\ref{alg:ssn-dual} globally converges to \(\overline{\bu}_{k+1}\).

Moreover, the Armijo line search eventually accepts the unit step, and the convergence is locally superlinear. In the strongly semismooth cases of Theorem~\ref{thm:ssn-all}, this rate improves to
\(
\|\bu^{i+1}-\overline{\bu}_{k+1}\|
=O\bigl(\|\bu^i-\overline{\bu}_{k+1}\|^{1+\alpha}\bigr).
\)
\end{theorem}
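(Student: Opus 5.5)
The plan is to follow the standard line-search semismooth Newton template of \cite[Proposition~3.3 and Theorem~3.4]{zhao2010newton} and \cite[Theorem~3]{li2018efficiently}, checking each hypothesis for $\varphi=\varphi_k$.

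\emph{Well-posedness.} By \eqref{eq:alm-subproblem}, $\varphi$ is the sum of a convex $C^1$ function with Lipschitz gradient (\eqref{eq:nabla-phik} involves nonexpansive proximal maps) and the term $\frac{\beta}{2\rho}\|\bu-\bu_k\|^2$. Hence $\varphi$ is $\frac{\beta}{\rho}$-strongly convex, so the minimizer $\overline{\bu}_{k+1}$ exists and is unique, and the level set $\{\bu:\varphi(\bu)\le\varphi(\bu^0)\}$ is compact. Every $U\in\mathcal K_{\nabla\varphi}$ satisfies $\frac{\beta}{\rho}I_n\preceq U\preceq \Lambda I_n$, with $\Lambda=\rho(\|D\|^2+1)+\beta/\rho$. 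The lower bound was already recorded before Algorithm~\ref{alg:ssn-dual}. The upper bound uses $\|W\|\le 1$ for soft-thresholding, $\|V\|\le1$ from \eqref{eq:sc1-jac-norm}, and the orthogonal-projector form in the hinge case.

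\emph{Descent and global convergence.} Write $\br^i=U^i\bv^i+\nabla\varphi(\bu^i)$. Then
\[
\langle\nabla\varphi(\bu^i),\bv^i\rangle=-\langle \bv^i,U^i\bv^i\rangle+\langle \br^i,\bv^i\rangle,
\]
and $\|\bv^i\|\le(\rho/\beta)(\|\nabla\varphi(\bu^i)\|+\|\br^i\|)$. Using $\|\br^i\|\le\min(c_g,\|\nabla\varphi(\bu^i)\|^{1+\alpha})$ and boundedness of $\nabla\varphi$ on the level set, a small enough $c_g$ gives $\langle\nabla\varphi(\bu^i),\bv^i\rangle\le -c\|\nabla\varphi(\bu^i)\|^2$ with $c>0$ uniform in $i$. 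The Armijo rule is therefore well defined. Since $\nabla\varphi$ is Lipschitz, the standard argument gives $\eta^i$ bounded below. Summing the Armijo decreases gives $\nabla\varphi(\bu^i)\to0$, and strong convexity then yields $\bu^i\to\overline{\bu}_{k+1}$.

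\emph{Local rate.} Theorem~\ref{thm:ssn-all} gives G-semismoothness (strong in the listed cases) of $\nabla\varphi$ at $\overline{\bu}_{k+1}$ with respect to $\mathcal K_{\nabla\varphi}$. Combined with the uniform bound $\|(U^i)^{-1}\|\le\rho/\beta$, the Newton-map criterion \cite[Theorem~4]{klatte2018approximations} gives
\[
\|\bu^i+\bv^i-\overline{\bu}_{k+1}\|\le \tfrac{\rho}{\beta}\bigl(\|\nabla\varphi(\bu^i)-\nabla\varphi(\overline{\bu}_{k+1})-U^i(\bu^i-\overline{\bu}_{k+1})\|+\|\br^i\|\bigr).
\]
The first term is $o(\|\bu^i-\overline{\bu}_{k+1}\|)$, or $O(\|\cdot\|^2)$ in the strong case. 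The second is $O(\|\bu^i-\overline{\bu}_{k+1}\|^{1+\alpha})$ because $\nabla\varphi$ is Lipschitz.

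\emph{Main obstacle: unit step acceptance.} This is where I expect the real work. Standard proofs use a second-order expansion of $\varphi$ that holds for semismooth gradients. Here $\nabla\varphi$ is only G-semismooth with respect to $\mathcal K_{\nabla\varphi}$, and in the general SC$^1$ case directional differentiability is not available. My first attempt would use the integral form
\[
\varphi(\bu+\bv)-\varphi(\bu)=\int_0^1\langle\nabla\varphi(\bu+t\bv),\bv\rangle\,dt .
\]
I would expand $\nabla\varphi(\bu+t\bv)$ about $\overline{\bu}_{k+1}$ using G-semismoothness with elements of $\mathcal K_{\nabla\varphi}$ at the points $\bu+t\bv$. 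Upper semicontinuity of $\mathcal K_{\nabla\varphi}$ then keeps these elements close to $U^i$, up to an $o(1)$ error. Together with $\|\bu^i+\bv^i-\overline{\bu}_{k+1}\|=o(\|\bv^i\|)$, this should give
\[
\varphi(\bu^i+\bv^i)-\varphi(\bu^i)\le\Bigl(\tfrac12+o(1)\Bigr)\langle\nabla\varphi(\bu^i),\bv^i\rangle ,
\]
which is acceptable with unit step since $c_l<1/2$. If that route stalls, the fallback is the argument of \cite[Theorem~3.4]{zhao2010newton}, which needs only these Newton-map estimates.
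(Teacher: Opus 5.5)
Your well-posedness bounds $\frac{\beta}{\rho}I_n\preceq U\preceq\Lambda I_n$, the descent and global-convergence argument, and the Newton-map estimate for $\|\bu^i+\bv^i-\overline{\bu}_{k+1}\|$ are correct. They follow the same template the paper invokes: its proof only cites \cite{zhao2010newton,li2018efficiently} and \cite[Theorem~4]{klatte2018approximations}, and says nothing specific about unit steps.

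\textbf{The gap.} It lies in the unit-step argument you flagged. Upper semicontinuity of $\mathcal K_{\nabla\varphi}$ at $\overline{\bu}_{k+1}$ only places $J_t\in\mathcal K_{\nabla\varphi}(\bu^i+t\bv^i)$ near the \emph{set} $\mathcal K_{\nabla\varphi}(\overline{\bu}_{k+1})$, not near the particular $U^i$. That set is in general not a singleton:
\begin{itemize}
\item several admissible $s$ and $K$ in \eqref{eq:hinge-jac-no-p};
\item several $K\in\mathcal B$, and several Clarke elements of $l'$ at its kinks, in \eqref{eq:sc1-jac};
\item several $W$ for soft-thresholding.
\end{itemize}
So $\int_0^1(1-t)\langle J_t\bv^i,\bv^i\rangle\,dt$ need not equal $\frac12\langle U^i\bv^i,\bv^i\rangle+o(\|\bv^i\|^2)$.

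In fact, G-semismoothness at the solution, upper semicontinuity, and uniform positive definiteness together do not yield your inequality. In one dimension let $\varphi'(u)=u\,g(u)$ with $g(u)=\frac54+\frac14\sin\log|u|$, and set $\mathcal K(u)=\{g(u)\}$ for $u\ne0$ and $\mathcal K(0)=[1,\frac32]$.
\begin{itemize}
\item $\varphi$ is strongly convex with Lipschitz gradient, since $\varphi''\in[\frac54-\frac{\sqrt2}{4},\frac54+\frac{\sqrt2}{4}]$ away from $0$.
\item $\varphi'$ is G-semismooth at $0$ with zero remainder, and the exact Newton step is $v=-u$.
\item Yet at $u=e^{\pi/2-2\pi m}$, $m\in\mathbb{N}$, one computes $\varphi(u+v)-\varphi(u)=-\frac{29}{40}u^2$ and $\varphi'(u)v=-\frac32u^2$.
\end{itemize}
Hence the unit step is rejected for every $c_l\in(\frac{29}{60},\frac12)$.

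Your fallback is also misattributed. The unit-step part of \cite{zhao2010newton} relies on Facchinei's result for SC$^1$ minimization. That result uses semismoothness of $\nabla\varphi$ in the usual sense, i.e.\ including directional differentiability, not just Newton-map estimates.

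\textbf{What closes the gap.} You need directional differentiability (hence, by local Lipschitz continuity, B-differentiability) of $\nabla\varphi$ at $\overline{\bu}_{k+1}$ alone. Equivalently, you need it for $\prox_{\rho f}$, since $\prox_{\rho g}$ is piecewise affine. No control of $J_t$ along the segment is needed. With $\boldsymbol h^i=\bu^i-\overline{\bu}_{k+1}$, the argument runs as follows.
\begin{enumerate}
\item B-differentiability and positive homogeneity of $(\nabla\varphi)'(\overline{\bu}_{k+1};\cdot)$ give $\varphi(\bu^i)-\varphi(\overline{\bu}_{k+1})=\frac12\langle\nabla\varphi(\bu^i),\boldsymbol h^i\rangle+o(\|\boldsymbol h^i\|^2)$.
\item The Lipschitz gradient gives $0\le\varphi(\bu^i+\bv^i)-\varphi(\overline{\bu}_{k+1})\le\frac{\Lambda}{2}\|\bu^i+\bv^i-\overline{\bu}_{k+1}\|^2=o(\|\boldsymbol h^i\|^2)$.
\item Strong monotonicity gives $\langle\nabla\varphi(\bu^i),\boldsymbol h^i\rangle\ge\frac{\beta}{\rho}\|\boldsymbol h^i\|^2$.
\item Combining these, $\varphi(\bu^i+\bv^i)-\varphi(\bu^i)=\frac12\langle\nabla\varphi(\bu^i),\bv^i\rangle+o(\|\boldsymbol h^i\|^2)$, hence the unit step is eventually accepted for every $c_l<\frac12$.
\end{enumerate}

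This covers the hinge, smoothed hinge, and $C^2$ cases, where the proof of Theorem~\ref{thm:ssn-all} provides directional differentiability. For a general SC$^1$ loss, Theorem~\ref{thm:ssn-all} supplies only G-semismoothness, so you have two options.
\begin{itemize}
\item Prove directional differentiability of the ordered solution map $\bz^*(\cdot)$ separately. This is plausible, since $l'$ is directionally differentiable under the paper's definition of semismoothness, but it is an additional argument.
\item Restrict the line-search constant. Strong convexity and the Lipschitz gradient alone give unit-step acceptance once $\|\bu^i+\bv^i-\overline{\bu}_{k+1}\|=o(\|\boldsymbol h^i\|)$, provided $c_l<\beta/(2\rho\Lambda)$.
\end{itemize}
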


\subsection{Efficient Implementation of the Newton System}

To avoid the prohibitive cost of directly solving the dense $n \times n$ Newton system \eqref{eq:ssn-subproblem}, we exploit the sparse structure of $\mathcal{K}_{\nabla \varphi}$ and the active sets. Iteration indices are omitted for brevity.

Recall from \eqref{eq:nabla-phi-jac} that any generalized Jacobian $U \in \mathcal{K}_{\nabla \varphi}(\bu)$ selected in Algorithm~\ref{alg:ssn-dual} takes the form $U = \rho(DWD^\top + V) + \frac{\beta}{\rho} I_n$. By Theorems~\ref{thm:hinge-loss-jac} and~\ref{thm:sc1-jacobian}, the matrix $V \in \mathcal{J}_{HS}(\rho \bu + \bz)$ can be explicitly written as $V = P^\top J P$ for some permutation matrix $P \in \mathcal{P}(\rho \bu + \bz)$. For the hinge loss, \(J\in\mathcal J(P(\rho\bu+\bz))\) is selected from \eqref{eq:hinge-jac-no-p}. For the SC$^1$ loss, \(J=(I_n+QA)^{-1} Q\).

Letting $\br = -\nabla \varphi(\bu)$ and substituting the decomposition of $U$ into the Newton system \eqref{eq:ssn-subproblem}, we premultiply the equation by $P$ to obtain the permuted system for $\bar{\bv} = P \bv$ as follows.
\begin{equation}
\label{eq:sort-ssn-eq}
    \left(\rho (PD)W(PD)^\top + \rho J + \frac{\beta}{\rho} I_n \right) \bar{\bv} = P \br.
\end{equation}
The original Newton direction is subsequently recovered via $\bv = P^\top \bar{\bv}$.

Following \cite{wu2023convex}, we efficiently compute $(PD)W(PD)^\top$ by selecting an admissible diagonal matrix $W \in \partial \prox_{\rho g}(\bw-\rho D^\top \bu)$. Specifically, we set $W_{jj} = 1$ if $j \in \mathcal{C}$ and $W_{jj} = 0$ otherwise, where $\mathcal{C} \triangleq  \{i \in \llbracket 1,d\rrbracket \mid |(\bw-\rho D^\top\bu)_i| > \rho\lambda\}$ is the active set with cardinality $q = |\mathcal{C}|$. Letting $\bar{D} \in \mathbb{R}^{n\times q}$ be the submatrix of $PD$ consisting of the columns indexed by $\mathcal{C}$, we achieve the exact factorization $(PD)W(PD)^\top = \bar{D}\bar{D}^\top$. This algebraic reduction decreases the formation cost from $O(n^2 d)$ to $O(n^2 q)$, which is highly advantageous when $q \ll d$. When the system is solved by CG, the product with \(\bar D\bar D^\top\) is computed as \(\bar D(\bar D^\top\bv)\) in \(O(nq)\) operations without forming the \(n\times n\) matrix.

By defining $\bar{J} = \rho J + \beta \rho^{-1} I_n$, the linear system \eqref{eq:sort-ssn-eq} condenses to
\[
(\rho \bar{D} \bar{D}^\top + \bar{J})\bar{\bv} = P\br.
\]
Depending on the problem scale and the active set size $q$, we choose the linear solver as follows.
\begin{enumerate}
\item[(a)] For very large-scale instances, we solve the Newton system by the conjugate gradient method. The matrix \(J\) is positive semidefinite in both cases. For the hinge loss, this follows from the projection blocks in \eqref{eq:hinge-hat-s}--\eqref{eq:hinge-jac-no-p}. For the SC\(^1\) loss, it follows from \eqref{eq:sc1-jac-psd} and the orthogonality of the permutation matrix \(P\). Hence the coefficient matrix in \eqref{eq:sort-ssn-eq} is symmetric positive definite.
\item [(b)] Otherwise, when $q \geq \vartheta n$ for a given threshold $\vartheta \in (0,1]$, we explicitly form the coefficient matrix and employ the Cholesky factorization.
\item [(c)] Otherwise, if $q < \vartheta n$, as typically occurs once the sparse solution is identified, the Sherman-Morrison-Woodbury (SMW) formula gives
    \begin{equation}
        \label{eq:smw-formula-v}
        \bar{\bv} = \bar{J}^{-1}P\br - \bar{J}^{-1} \bar{D} \left(\rho^{-1}I_{q} + \bar{D}^\top \bar{J}^{-1} \bar{D}\right)^{-1} \bar{D}^\top \bar{J}^{-1}P\br.
    \end{equation}
\end{enumerate}
The key operation in the SMW formula is applying $\bar{J}^{-1}$. The remaining operations are products with \(\bar D\) and the solution of the small \(q\times q\) system \(\rho^{-1}I_{q}+\bar D^\top\bar J^{-1}\bar D\). Although the underlying Jacobian structures differ significantly between the hinge and SC$^1$ losses, we show below that $\bar{J}^{-1}$ admits an $O(n)$ application in both cases.

\subsubsection{Hinge Loss}

For the hinge loss, let $\bx=\rho\bu+\bz$ and $k_0=k_0(\bx)$. We evaluate a generalized Jacobian $J \in \mathcal{J}(\bx)$ in \eqref{eq:hinge-jac-no-p} with \(s=k_0\). Here we take the admissible choice \(K=\mathcal I^{k_0}\!\left(\vec{\bx}_{\llbracket k_0+1,n\rrbracket}\right)\), namely the full active set, which belongs to \(\mathcal B^{k_0}\!\left(\vec{\bx}_{\llbracket k_0+1,n\rrbracket}\right)\). This yields the block-diagonal matrix
\[
\bar{J}
= \rho J + \beta \rho^{-1} I_n
= \operatorname{BlkD}\left(\gamma I_{k_0}, \tilde{J}\right),\]
where \(\gamma
= \rho + \beta\rho^{-1},
\tilde{J}
= \gamma I_{n-k_0} - \rho M
\in \R^{(n-k_0) \times (n-k_0)}.
\)
Since $M = C_K^\top (C_K C_K^\top)^{-1} C_K$ is an orthogonal projector and $\gamma > \rho$, the block $\tilde{J}$ is strictly positive definite. Moreover, using \(M^2=M\) and \(\gamma-\rho=\beta\rho^{-1}\), we obtain
\[
\begin{aligned}
\bar{J}^{-1}
= \operatorname{BlkD}\left(\gamma^{-1} I_{k_0}, \tilde{J}^{-1}\right),\qquad
\tilde{J}^{-1}
= \frac{1}{\gamma} I_{n-k_0}
+ \frac{\rho^2}{\gamma \beta} M.
\end{aligned}
\]

The following block structure of \(M\) yields an \(O(n)\)-time application of this inverse.
\begin{proposition}\label{prop:hinge-explicit-inv}
Decompose the active set \(K\subseteq\llbracket 1,n-k_0\rrbracket\) into disjoint contiguous sequences \(K_{(q)}=\llbracket s_q,t_q\rrbracket\) of length \(m_q = t_q-s_q+1\) for \(q\in\llbracket 1,r\rrbracket\). If \(K=\emptyset\), we use the convention \(r=0\) and \(M=0\). Define \(\bar{K}_{(q)} = \llbracket s_q-1,t_q\rrbracket\) if \(s_q \ge 2\), and \(\bar{K}_{(q)} = \llbracket 1,t_q\rrbracket\) if \(s_q = 1\). Then, the projection matrix $M = C_K^\top(C_KC_K^\top)^{-1}C_K$, where \(C_K\) is the row submatrix of \(C^{n-k_0}\) indexed by \(K\), is block-diagonal with blocks indexed by \(\bar{K}_{(q)}\); all entries outside \(\cup_{q=1}^r\bar{K}_{(q)}\) are zero, and the nonzero blocks are given explicitly by
\[
M_{\bar{K}_{(q)},\,\bar{K}_{(q)}} =
\begin{cases}
I_{m_q}, & s_q = 1,\\[1mm]
I_{m_q+1} - \frac{1}{m_q+1}\mathbf{1}\mathbf{1}^\top, & s_q \ge 2.
\end{cases}
\]
\end{proposition}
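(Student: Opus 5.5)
The plan is to identify $M$ as the orthogonal projector onto $\operatorname{Range}(C_K^\top)=\operatorname{span}\{(C_K)_{i,:}^\top \mid i\in K\}$. This identification is valid because $C_K$ has full row rank, being a row submatrix of the nonsingular lower bidiagonal $C^{n-k_0}$. The first step is to write the rows explicitly. Row $1$ of $C^{m}$ is $-\bs{e}_1^\top$, and row $i\ge 2$ is $\bs{e}_{i-1}^\top-\bs{e}_i^\top$. Hence the rows indexed by a contiguous run $K_{(q)}=\llbracket s_q,t_q\rrbracket$ involve only the coordinates in $\bar K_{(q)}$.

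The second step is to show that the runs give mutually orthogonal subspaces supported on disjoint coordinate sets. Distinct maximal runs are separated by at least one index $j\notin K$. Thus, if $t_q<s_{q'}$, then $s_{q'}-1\ge t_q+1$, so $\bar K_{(q)}\cap\bar K_{(q')}=\emptyset$. Consequently $\operatorname{Range}(C_K^\top)$ is the orthogonal direct sum of the subspaces $R_q\triangleq\operatorname{span}\{(C_K)_{i,:}^\top\mid i\in K_{(q)}\}\subseteq\R^{\bar K_{(q)}}$ (vectors supported on $\bar K_{(q)}$). It follows that $M=\sum_q M_q$, where $M_q$ is the projector onto $R_q$. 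Each $M_q$ vanishes outside the block $\bar K_{(q)}\times\bar K_{(q)}$, which yields the claimed block-diagonal structure and the zeros outside $\cup_q\bar K_{(q)}$. The convention $K=\emptyset\Rightarrow M=0$ is immediate.

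The third step identifies each $R_q$ inside $\R^{\bar K_{(q)}}$. If $s_q\ge2$, the $m_q$ rows $\bs{e}_{i-1}-\bs{e}_i$, $i\in\llbracket s_q,t_q\rrbracket$, are linearly independent differences spanning the $m_q$-dimensional space $\{\bx\in\R^{\bar K_{(q)}}\mid \bs{1}^\top\bx=0\}$, whose dimension is $|\bar K_{(q)}|-1=m_q$. The projector onto this space is $I_{m_q+1}-\frac{1}{m_q+1}\bs{1}\bs{1}^\top$. If $s_q=1$, the rows are $-\bs{e}_1$ and $\bs{e}_{i-1}-\bs{e}_i$ for $i=2,\dots,t_q$. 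These are $m_q=t_q=|\bar K_{(q)}|$ independent vectors in $\R^{\bar K_{(q)}}$, so they span all of it and the projector is $I_{m_q}$.

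The only delicate point is the bookkeeping in the second step: confirming that maximal contiguous runs yield disjoint extended blocks $\bar K_{(q)}$ (the extension by $s_q-1$ never collides with the previous run's $t_{q'}$), and that the case $s_q=1$, where the row $-\bs{e}_1^\top$ acts as the ``boundary'' constraint $\bar z_1\ge -1$, is handled separately. The remaining steps are routine linear algebra and involve no computation of $(C_KC_K^\top)^{-1}$.
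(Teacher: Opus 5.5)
Your proof is correct and follows the paper's own argument: the blocks decouple, the local submatrix for $s_q=1$ is square and invertible, so its block is the identity, and for $s_q\ge 2$ its null space is spanned by $\mathbf{1}$, which gives $I_{m_q+1}-\frac{1}{m_q+1}\mathbf{1}\mathbf{1}^\top$. Where the paper speaks of the null space, you speak of the row space as $\mathbf{1}^\perp$, which is the same thing; you also make explicit that the runs must be maximal, which the statement leaves implicit but which is needed for the sets $\bar K_{(q)}$ to be disjoint.
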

\begin{proof}[Proof Sketch]
As the full derivation relies on straightforward algebraic verifications, we emphasize the structural intuitions.
If $s_q = 1$, the left boundary truncation yields an invertible lower-bidiagonal square submatrix $C_{K_{(q)}, \bar{K}_{(q)}}$, whose orthogonal projection trivially simplifies to the full-rank identity matrix $I_{m_q}$. Conversely, if $s_q \ge 2$, the local submatrix possesses a one-dimensional null space spanned by the all-ones vector $\mathbf{1}$. In this case, the orthogonal projection results in the projection matrix $I_{m_q+1} - \frac{1}{m_q+1}\mathbf{1}\mathbf{1}^\top$.
\end{proof}
{Consequently, applying $\bar{J}^{-1}$ to any vector $\bv$ costs $O(n)$ operations.}

\subsubsection{\texorpdfstring{SC$^1$ Loss}{SC1 Loss}}

For the SC$^1$ loss, let $\bx=P(\rho\bu+\bz)$, and choose $Q \in \mathcal{Q}(\bx)$ and $A \in \mathcal{A}(\bx)$. Substituting the generalized Jacobian $J = (I_n+QA)^{-1}Q$ from \eqref{eq:sc1-jac} into the shifted matrix \(\bar{J} = \rho J + \beta\rho^{-1}I_n\) yields
\[
\bar{J} = \rho(I_n+QA)^{-1}Q + \frac{\beta}{\rho}I_n = \rho(I_n+QA)^{-1} \left(Q + \frac{\beta}{\rho^2}(I_n+QA)\right),
\]
where $Q = I_n - B_K^\top (B_K B_K^\top)^{-1} B_K$ for some $K \in \mathcal{B}(\bx)$.

Defining the linear operator $\mathcal{R}(\Theta) \triangleq  B_K^\top(B_K\Theta B_K^\top)^{-1}B_K$ for any diagonal $\Theta \succ 0$, and using \(Q^2=Q\), we expand $\bar{J}^{-1}$ as
\begin{equation}\label{eq:Jinverse}
\bar{J}^{-1} = \frac{1}{\rho}\left[ \left(1+\frac{\beta}{\rho^2}\right)I_n + \frac{\beta}{\rho^2}A - \mathcal{R}(I_n)\left(I_n+\frac{\beta}{\rho^2}A\right) \right]^{-1} (I_n + A - \mathcal{R}(I_n)A).
\end{equation}
Applying the SMW formula to invert the leading matrix gives
\begin{equation}\label{eq:G-R}
\left(G - \mathcal{R}(I_n) (G - \beta\rho^{-2}I_n)\right)^{-1} = G^{-1} + G^{-1}\mathcal{R}(I_n - H)H,
\end{equation}
where the diagonal matrices $G = (1+\beta\rho^{-2})I_n + \beta\rho^{-2}A$ and $H = (G - \beta\rho^{-2}I_n)G^{-1}$ are strictly positive definite. Indeed, since \(G-\beta\rho^{-2}I_n=HG\), the leading matrix equals \((I_n-\mathcal R(I_n)H)G\), and the SMW formula gives \((I_n-\mathcal R(I_n)H)^{-1}=I_n+\mathcal R(I_n-H)H\). Using $A\succeq0$ one can verify that each diagonal entry of $H$ lies in $(0,1)$. Hence $I_n - H \succ 0$, so \(\mathcal R(I_n-H)\) is well defined.

To apply $\bar{J}^{-1}$ in $O(n)$ operations, we exploit the block-diagonal structure of $\mathcal{R}(\cdot)$ characterized below.

\begin{proposition}\label{prop:smooth-explicit-inv}
Let \(\Theta = \operatorname{diag}(\theta_1, \dots, \theta_n) \succ 0\). Decompose the active set \(K \subseteq \llbracket 1,n-1\rrbracket\) into disjoint maximal contiguous sequences \(K_{(q)} = \llbracket s_q,t_q\rrbracket\) for \(q\in\llbracket 1,r\rrbracket\). If \(K=\emptyset\), we use the convention \(r=0\) and \(\mathcal R(\Theta)=0\). Define the extended index sets \(\bar{K}_{(q)} = \llbracket s_q,t_q+1\rrbracket\). Then \(\mathcal{R}(\Theta)\) vanishes outside \(\cup_{q=1}^r\bar{K}_{(q)}\) and is block-diagonal over the sets \(\bar{K}_{(q)}\), with blocks
\[
\mathcal{R}(\Theta)_{\bar{K}_{(q)},\, \bar{K}_{(q)}} = D_{(q)} - \frac{1}{S_{(q)}} d_{(q)} d_{(q)}^\top,
\]
where \(D_{(q)} = \operatorname{diag}\!\big((\theta_i^{-1})_{i \in \bar{K}_{(q)}}\big)\), \(d_{(q)} = (\theta_i^{-1})_{i \in \bar{K}_{(q)}}\), and \(S_{(q)} = \sum_{k \in \bar{K}_{(q)}} \theta_k^{-1}\).
\end{proposition}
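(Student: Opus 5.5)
The plan is to exploit the fact that $B_K$ decomposes row-wise into independent groups, one for each maximal contiguous run $K_{(q)}$. First, I would note that row $i\in K$ of $B$ is $\bs{e}_i^\top-\bs{e}_{i+1}^\top$, so its support is $\{i,i+1\}$. For a maximal run $K_{(q)}=\llbracket s_q,t_q\rrbracket$, the rows indexed by $K_{(q)}$ are therefore supported on $\bar K_{(q)}=\llbracket s_q,t_q+1\rrbracket$. Maximality means $s_q-1\notin K$ and $t_q+1\notin K$. Hence distinct runs are separated by at least one index not in $K$, and the sets $\bar K_{(q)}$ are pairwise disjoint.

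Because $\Theta$ is diagonal, $B_K\Theta B_K^\top$ has $(i,j)$ entry $\sum_k \theta_k (B_K)_{ik}(B_K)_{jk}$. This entry vanishes whenever rows $i$ and $j$ belong to different runs, since their supports are disjoint. So $B_K\Theta B_K^\top$ is block diagonal over the runs, and so is its inverse. Consequently
\[
\mathcal R(\Theta)=\sum_{q=1}^r B_{K_{(q)}}^\top\bigl(B_{K_{(q)}}\Theta B_{K_{(q)}}^\top\bigr)^{-1}B_{K_{(q)}}.
\]
Each summand is supported on $\bar K_{(q)}\times\bar K_{(q)}$. This gives both the vanishing outside $\cup_q\bar K_{(q)}$ and the block-diagonal structure. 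It then suffices to compute a single block, with $m=t_q-s_q+1$ rows and $m+1$ columns.

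Within one block, I would write $E\in\R^{m\times(m+1)}$ for the local difference matrix and $\Theta_q$ for the corresponding diagonal part of $\Theta$. The target $E^\top(E\Theta_qE^\top)^{-1}E$ is determined by an oblique-projection identity. The matrix $E$ has full row rank and null space $\operatorname{span}(\mathbf 1)$. The operator $T\triangleq\Theta_q^{1/2}E^\top(E\Theta_qE^\top)^{-1}E\Theta_q^{1/2}$ is the orthogonal projector onto $\operatorname{Range}(\Theta_q^{1/2}E^\top)$, whose orthogonal complement is $\operatorname{Null}(E\Theta_q^{1/2})=\operatorname{span}(\Theta_q^{-1/2}\mathbf 1)$. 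Hence
\[
T=I-\frac{\Theta_q^{-1/2}\mathbf 1\mathbf 1^\top\Theta_q^{-1/2}}{\mathbf 1^\top\Theta_q^{-1}\mathbf 1}.
\]
Conjugating by $\Theta_q^{-1/2}$ on both sides gives
\[
E^\top(E\Theta_qE^\top)^{-1}E=\Theta_q^{-1}-\frac{\Theta_q^{-1}\mathbf 1\mathbf 1^\top\Theta_q^{-1}}{\mathbf 1^\top\Theta_q^{-1}\mathbf 1}=D_{(q)}-\frac{1}{S_{(q)}}d_{(q)}d_{(q)}^\top,
\]
which is exactly the claimed formula.

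I expect the main obstacle to be verifying the projector identity carefully. That requires $E\Theta_q^{1/2}$ to have full row rank, which holds because $E$ does and $\Theta_q\succ0$, and it requires the null-space identification $\operatorname{Null}(E\Theta_q^{1/2})=\operatorname{span}(\Theta_q^{-1/2}\mathbf 1)$, which follows because the kernel is one-dimensional and $E\mathbf 1=0$. I would also confirm that the boundary case $t_q=n-1$ stays within the index range: there $\bar K_{(q)}$ ends at $n$, so it remains well defined. Finally, the convention $K=\emptyset$ is handled trivially, since the sum over runs is then empty.
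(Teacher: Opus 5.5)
Your proposal is correct and takes essentially the same route as the paper's proof sketch: you decouple $B_K$ over the maximal runs, whose extended supports $\bar K_{(q)}$ are pairwise disjoint, to get the block-diagonal sum, and you then compute each local block from the fact that the null space of the local difference matrix is spanned by $\mathbf{1}$. Your $\Theta_q^{1/2}$-scaled orthogonal-projector argument is a clean, complete way of carrying out the ``direct local calculation'' that the paper leaves implicit.
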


\begin{proof}[Proof Sketch]
Since $B_K$ decouples over the disjoint contiguous sequences $K_{(q)}$, the operator $\mathcal{R}(\Theta)$ is block-diagonal. For each local block, the null space of \(B_{K_{(q)},\,\bar{K}_{(q)}}\) is spanned by $\mathbf{1}$, and a direct local calculation gives the weighted Laplacian-type block \(D_{(q)} - S_{(q)}^{-1}d_{(q)}d_{(q)}^\top\).
\end{proof}

By \eqref{eq:Jinverse} and \eqref{eq:G-R}, applying $\bar{J}^{-1}$ to a vector involves only element-wise operations and applications of $\mathcal{R}(\cdot)$. By Proposition~\ref{prop:smooth-explicit-inv}, each application of $\mathcal{R}(\cdot)$ costs $O(n)$ operations via block-wise inner products. Hence, $\bar{J}^{-1}\bv$ can be computed in $O(n)$ operations.

\section{Numerical Experiments}

We implement the proposed method in Python 3.8. The outer iterations follow the \textnormal{\textsc{ripALM}} scheme (Algorithm~\ref{alg:rALM}), while each inner subproblem \eqref{problem:inexact-u-subproblem} is solved by the \textnormal{\textsc{Ssn}} method (Algorithm~\ref{alg:ssn-dual}). All experiments are executed on a Linux server equipped with an AMD EPYC 7402 CPU (2.8 GHz, 96 cores) and 256 GB RAM.

\subsection{Experimental Setup}
\label{sec:experimental-setup}

To evaluate the optimality of the computed solutions for both the proposed \textnormal{\textsc{ripALM}}-\textnormal{\textsc{Ssn}} method and the ADMM baseline, we measure their discrepancy against the KKT conditions of \eqref{eq:dual-problem}. 
In all reported binary-classification experiments, \(D=-\by\circ X\) and the affine offset is \(\bc=\bs{0}\), so the constraint reduces to \(D\bw-\bz=\bs{0}\).
We employ the relative primal and dual KKT residuals defined respectively as
{\small\begin{equation}
\label{eq:kkt-residual}
    \begin{aligned}
    \eta_{p} &= \max\left\{\frac{\|D^\top\bu + \bs{\xi}\|}{1+\|D^\top\bu\|+ \|\bs{\xi}\|}, \frac{\|\bu-\bs{\zeta}\|}{1+\|\bu\| + \|\bs{\zeta}\|}\right\}, \\
    \eta_{d} &= \max\bigg\{\frac{\|D\bw-\bz\|}{1+\|D\bw\| + \|\bz\|}, \frac{\|\bs{\xi}-\Pi_{B^\lambda_{\Vert \cdot\Vert_\infty}}(\bw+\bs{\xi})\|}{1+\|\bs{\xi}\| + \|\bw\|}, \frac{\|\bz - \prox_{f}(\bz+\bs{\zeta})\|}{1+\|\bs{\zeta}\| + \|\bz\|}\bigg\}.
    \end{aligned}
\end{equation}}
Here $B^\lambda_{\|\cdot\|_\infty}\triangleq \{\bs{x}\in\R^d\mid \|\bs{x}\|_\infty\le \lambda\}$, and $\Pi_C$ denotes the Euclidean projection onto a closed convex set $C$.
Since evaluating $f^*$ is intractable, we also monitor the relative objective change $\text{obj}_{\text{gap}} = |\mathcal{F}(\bw_{k+1})-\mathcal{F}(\bw_k)|/(1+|\mathcal{F}(\bw_k)|)$, where $\mathcal{F}$ is the primal objective \eqref{eq:primal-problem}. Both methods (the \textnormal{\textsc{ripALM}}-\textnormal{\textsc{Ssn}} method and the ADMM baseline in Appendix~\ref{sec:admm}) terminate when $\max\{\eta_p, \eta_d\} \leq 10^{-5}$, or when $\text{obj}_{\text{gap}} \leq 10^{-7}$ alongside $\max\{\eta_p, \eta_d\} \leq 10^{-4}$.

We evaluate three spectral functions sourced from \eqref{eq:srm_cvar}, \eqref{eq:srm_esrm}, and \eqref{eq:srm_extremile}. The parameters are fixed as $(\nu, \varrho, r) = (0.15, 0.1, 1.05)$.
Following \cite{koh2007method}, the regularization parameter \(\lambda\) is scaled by 
$
\lambda_{\rm scale}:=\max_{P\in\Pi_n}
\|a_\ell D^\top(P\boldsymbol{\sigma})\|_\infty,
$
where $\Pi_n$ denotes the set of all $n\times n$ permutation matrices, \(D=-\by\circ X\), \(a_\ell=1/2\) for the logistic loss, and \(a_\ell=1\) for the hinge and smoothed hinge losses.

For the \textnormal{\textsc{ripALM}}-\textnormal{\textsc{Ssn}} method, we initialize all variables to zero. We set $\tau = 0.05$ in Algorithm~\ref{alg:rALM}, and $\alpha=1$ and $\theta=0.7$ in Algorithm~\ref{alg:ssn-dual}.
For the hinge loss, we fix $\beta_k = \max\{50\lambda, 5\}$ and update $\rho_k = \max\{50\lambda, 20\} \cdot 2^k$.
For the SC$^1$ loss, we employ $\beta_k = \max\{20\lambda, 5\}$ and $\rho_k = \max\{20\lambda, 20\} \cdot 3^{\lfloor k/2\rfloor}$.
The ADMM baseline employs tuning strategies identical to \cite{wu2023convex}.

\subsection{Synthetic Dataset Experiment}

To investigate scalability, we follow the protocol in~\cite{liu2023dual} by setting $(n,d) = (250i,\, 5000i)$ for \(i \in \llbracket 1,9\rrbracket\). Following \cite{koh2007method}, we construct balanced binary datasets where positive and negative features are drawn from $\mathcal{N}(1,1)$ and $\mathcal{N}(-1,1)$, respectively. The feature matrices $X$ are uniformly sparsified to a $70\%$ sparsity level. The regularization parameter is fixed at $\lambda = 0.1\lambda_{\rm scale}$.
\begin{figure}[!h]
    \centering
    \includegraphics[width=\textwidth]{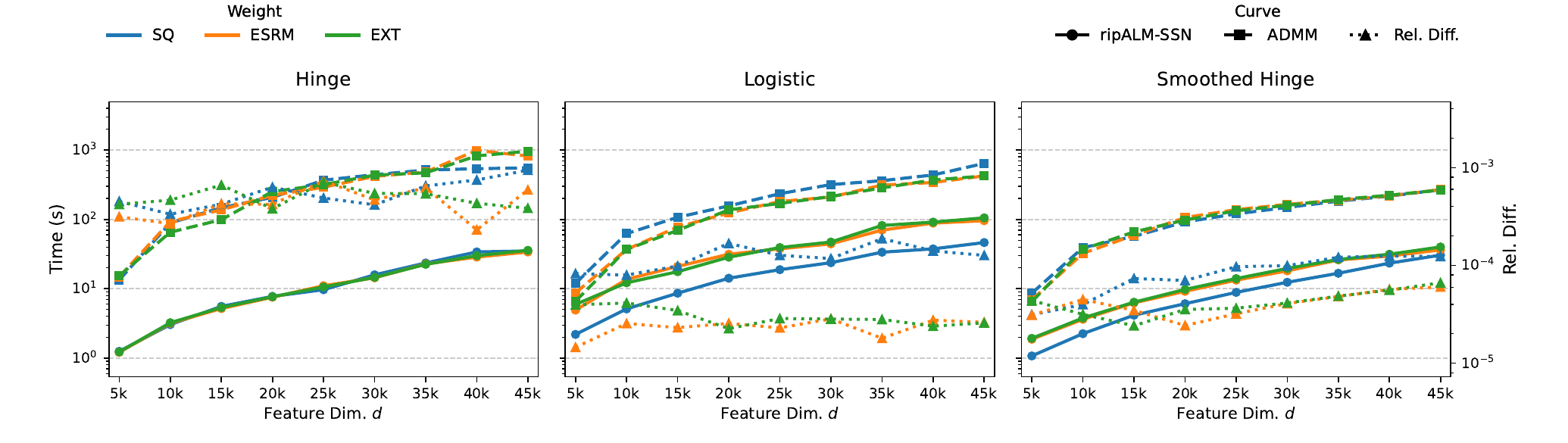}

    \caption{Average computation time and average relative objective difference across problem dimensions, computed over five independent datasets for each scale. From left to right, the panels correspond to the hinge, logistic, and smoothed hinge losses. Colors denote the weight functions Superquantile (SQ), ESRM, and Extremile (EXT), while line styles distinguish \textnormal{\textsc{ripALM}}-\textnormal{\textsc{Ssn}}, ADMM, and the Rel. Diff. \eqref{eq:obj-diff}.}
    \label{fig:syn-exp}
\end{figure}

Figure~\ref{fig:syn-exp} reports averages over five independent datasets at each scale. The metric ``Rel. Diff.'' computes the relative objective difference formulated as
\begin{equation}
    \label{eq:obj-diff}
    \text{Rel. Diff.} = \frac{ \mathcal{F}(\bw_{\text{ADMM}}) - \mathcal{F}(\bw_{\text{\textnormal{\textsc{ripALM}}-\textnormal{\textsc{Ssn}}}})}
     {1 + |\mathcal{F}(\bw_{\text{ADMM}})|}.
\end{equation}
As shown in Figure~\ref{fig:syn-exp}, the \textnormal{\textsc{ripALM}}-\textnormal{\textsc{Ssn}} method scales favorably with the problem dimension. It converges within $100$ seconds across all tested scales, whereas ADMM is several times to over one order of magnitude slower and exceeds 1000 seconds in the hardest hinge-loss cases. Moreover, the relative differences remain positive across all tested settings, indicating that the \textnormal{\textsc{ripALM}}-\textnormal{\textsc{Ssn}} method attains lower objective values than ADMM on average. These results demonstrate the efficiency, scalability, and solution quality of the proposed \textnormal{\textsc{ripALM}}-\textnormal{\textsc{Ssn}} method.

\subsection{Real-World Data Experiment}
\begin{table}[ht]
\centering
\caption{Comparison results on real datasets across different loss functions. For Time, Iter, and KKT, entries $a/b$ denote \textnormal{\textsc{ripALM}}-\textnormal{\textsc{Ssn}}/ADMM. For the \textnormal{\textsc{ripALM}}-\textnormal{\textsc{Ssn}} method, Iter denotes outer (total inner) iterations. KKT denotes \(10^6\max\{\eta_p,\eta_d\}\), with \(\eta_p,\eta_d\) in \eqref{eq:kkt-residual}. Diff reports \eqref{eq:obj-diff}, and NNZ reports the number of nonzeros in the solution obtained by the \textnormal{\textsc{ripALM}}-\textnormal{\textsc{Ssn}} method.}
\label{tab:real-data-comparison}
{
\tiny
\setlength{\tabcolsep}{0pt}
\renewcommand{\arraystretch}{1.02}
\begin{tabular}{l|c@{\hspace{0.8pt}}c@{\hspace{0.8pt}}c@{\hspace{1.6pt}}|@{\hspace{1.6pt}}c@{\hspace{0.8pt}}c@{\hspace{0.8pt}}c@{\hspace{1.6pt}}|@{\hspace{1.6pt}}c@{\hspace{0.8pt}}c@{\hspace{0.8pt}}c}
\toprule[1.5pt]
\textbf{Data} & \multicolumn{3}{c}{\shortstack{\textbf{Dexter} (300,20000)}} & \multicolumn{3}{c}{\shortstack{\textbf{LiverC} (76,36547)}} & \multicolumn{3}{c}{\shortstack{\textbf{BreastC} (151,54675)}} \\
\midrule[1.2pt]
$\tfrac{\lambda}{\lambda_{\rm scale}}$ & 0.20 & 0.07 & 0.03 & 0.20 & 0.07 & 0.03 & 0.20 & 0.07 & 0.03 \\
\midrule[1.2pt]
\addlinespace[0.35em]
\multicolumn{1}{l}{} & \multicolumn{9}{l}{\makebox[20em][l]{\textbf{Logistic Loss:}}\textbf{ESRM}} \\
\midrule[0.5pt]
~ Time & 3.18/10.7 & 4.51/28.1 & 5.82/21.8 & 2.59/53.8 & 3.24/60.1 & 5.02/50.7 & 8.15/109.2 & 8.73/88.1 & 9.85/139.8 \\
~ Iter & 7(82)/221 & 11(90)/573 & 16(114)/617 & 9(63)/1365 & 15(68)/1233 & 21(90)/1145 & 10(108)/1189 & 15(98)/969 & 18(106)/1453 \\
~ Diff & 3.3e-05 & 1.4e-05 & 4.6e-05 & 2.7e-04 & 2.7e-04 & 3.5e-04 & 3.6e-04 & 2.7e-04 & 1.1e-04 \\
~ KKT & 9.2/6.6 & 5.2/7.7 & 6.1/44 & 3.4/5.5 & 7.4/49 & 8.1/100 & 6.7/9.0 & 4.8/8.6 & 7.2/9.2 \\
~ NNZ & 42 & 148 & 170 & 26 & 44 & 49 & 46 & 93 & 108 \\
\midrule[0.75pt]
\multicolumn{1}{l}{} & \multicolumn{9}{l}{\makebox[20em][l]{\textbf{Logistic Loss:}}\textbf{Extremile}} \\
\midrule[0.5pt]
~ Time & 3.80/10.3 & 5.00/26.4 & 5.75/32.8 & 2.52/37.8 & 3.23/52.3 & 5.35/86.3 & 8.49/98.7 & 8.24/96.5 & 12.11/130.9 \\
~ Iter & 7(95)/221 & 11(103)/573 & 16(119)/705 & 9(65)/1013 & 15(77)/1277 & 21(98)/2289 & 10(108)/1189 & 14(100)/969 & 18(107)/1453 \\
~ Diff & 3.1e-05 & 1.4e-05 & 2.6e-05 & 1.2e-03 & 2.0e-04 & 9.6e-05 & 3.9e-04 & 2.7e-04 & 1.0e-04 \\
~ KKT & 9.0/5.9 & 4.6/6.4 & 5.2/23 & 3.1/77 & 5.9/41 & 7.2/9.7 & 6.1/9.7 & 8.5/8.8 & 6.5/9.5 \\
~ NNZ & 42 & 145 & 169 & 26 & 43 & 49 & 47 & 93 & 106 \\
\midrule[0.75pt]
\multicolumn{1}{l}{} & \multicolumn{9}{l}{\makebox[20em][l]{\textbf{Logistic Loss:}}\textbf{Superquantile}} \\
\midrule[0.5pt]
~ Time & 1.74/14.0 & 3.36/19.8 & 4.96/30.6 & 1.43/68.1 & 2.64/52.0 & 3.68/97.8 & 4.37/245.4 & 5.56/125.3 & 8.07/137.9 \\
~ Iter & 8(38)/353 & 10(56)/485 & 13(90)/661 & 8(30)/1849 & 12(50)/1277 & 19(68)/2509 & 9(44)/3125 & 12(62)/1233 & 17(85)/1673 \\
~ Diff & 7.6e-05 & 2.2e-05 & 1.2e-05 & 2.8e-04 & 2.3e-04 & 5.8e-05 & 6.4e-04 & 2.6e-04 & 2.3e-04 \\
~ KKT & 3.0/9.0 & 8.7/7.0 & 6.0/6.7 & 2.7/5.6 & 9.8/6.0 & 6.3/14 & 1.9/9.6 & 8.7/7.7 & 6.5/8.6 \\
~ NNZ & 36 & 140 & 169 & 17 & 43 & 48 & 36 & 100 & 115 \\
\midrule[1.2pt]
$\tfrac{\lambda}{\lambda_{\rm scale}}$ & 0.10 & 0.05 & 0.01 & 0.10 & 0.05 & 0.01 & 0.10 & 0.05 & 0.01 \\
\midrule[1.2pt]
\addlinespace[0.35em]
\multicolumn{1}{l}{} & \multicolumn{9}{l}{\makebox[20em][l]{\textbf{Smoothed Hinge Loss:}}\textbf{ESRM}} \\
\midrule[0.5pt]
~ Time & 1.75/16.8 & 2.54/16.7 & 4.70/78.7 & 1.95/98.2 & 2.70/95.7 & 4.74/96.8 & 4.41/138.9 & 6.31/179.7 & 9.78/353.7 \\
~ Iter & 6(40)/353 & 8(55)/353 & 16(97)/1673 & 8(39)/2553 & 9(48)/2509 & 20(79)/2421 & 8(53)/1761 & 11(66)/1937 & 17(101)/3829 \\
~ Diff & 1.3e-04 & 1.2e-04 & 3.5e-05 & 1.2e-03 & 6.0e-04 & 6.2e-04 & 1.0e-03 & 6.0e-04 & 3.8e-04 \\
~ KKT & 9.0/5.8 & 6.4/8.9 & 8.3/15 & 6.0/9.0 & 8.1/6.7 & 9.6/58 & 6.7/9.5 & 4.0/9.3 & 6.7/21 \\
~ NNZ & 147 & 202 & 315 & 51 & 59 & 64 & 99 & 126 & 139 \\
\midrule[0.75pt]
\multicolumn{1}{l}{} & \multicolumn{9}{l}{\makebox[20em][l]{\textbf{Smoothed Hinge Loss:}}\textbf{Extremile}} \\
\midrule[0.5pt]
~ Time & 1.80/17.8 & 2.69/17.9 & 4.67/53.1 & 2.02/99.5 & 2.34/98.6 & 5.55/105.7 & 4.42/172.0 & 6.07/180.2 & 8.65/241.9 \\
~ Iter & 7(44)/353 & 8(54)/353 & 16(95)/1101 & 8(39)/2553 & 9(46)/2465 & 20(75)/2861 & 8(54)/1761 & 11(66)/1937 & 17(96)/2641 \\
~ Diff & 1.3e-04 & 1.2e-04 & 1.9e-04 & 1.2e-03 & 6.4e-04 & 2.3e-04 & 9.9e-04 & 6.4e-04 & 5.3e-04 \\
~ KKT & 1.2/5.7 & 5.6/8.8 & 7.9/54 & 5.2/7.6 & 6.8/9.5 & 8.2/25 & 6.2/9.2 & 3.9/9.6 & 6.5/89 \\
~ NNZ & 146 & 201 & 315 & 51 & 58 & 64 & 98 & 126 & 139 \\
\midrule[0.75pt]
\multicolumn{1}{l}{} & \multicolumn{9}{l}{\makebox[20em][l]{\textbf{Smoothed Hinge Loss:}}\textbf{Superquantile}} \\
\midrule[0.5pt]
~ Time & 1.68/17.3 & 1.38/19.8 & 4.11/58.2 & 1.56/98.4 & 1.88/107.0 & 3.88/79.6 & 3.75/206.4 & 5.37/220.2 & 9.12/270.7 \\
~ Iter & 7(31)/353 & 7(32)/397 & 14(67)/1453 & 7(29)/2597 & 8(27)/2729 & 16(55)/1981 & 7(38)/2245 & 9(49)/2333 & 17(86)/2861 \\
~ Diff & 3.0e-04 & 9.3e-05 & 4.0e-05 & 1.8e-03 & 8.8e-04 & 1.1e-03 & 1.4e-03 & 7.2e-04 & 3.2e-04 \\
~ KKT & 4.4/8.6 & 5.7/6.9 & 8.9/16 & 4.0/8.5 & 5.1/8.7 & 10/76 & 6.7/9.3 & 7.8/9.5 & 7.3/29 \\
~ NNZ & 116 & 192 & 313 & 46 & 54 & 62 & 96 & 122 & 140 \\
\midrule[0.75pt]
\addlinespace[0.35em]
\multicolumn{1}{l}{} & \multicolumn{9}{l}{\makebox[20em][l]{\textbf{Hinge Loss:}}\textbf{ESRM}} \\
\midrule[0.5pt]
~ Time & 3.41/253.2 & 3.70/66.3 & 4.74/123.7 & 3.44/194.9 & 4.53/423.4 & 8.21/225.0 & 4.70/425.9 & 8.58/414.5 & 9.81/281.6 \\
~ Iter & 7(82)/5149 & 9(74)/1409 & 16(98)/2597 & 8(49)/4885 & 10(50)/10561 & 20(93)/5501 & 8(63)/5765 & 11(80)/6117 & 15(102)/3125 \\
~ Diff & 2.6e-04 & 1.1e-03 & 3.8e-04 & 5.4e-03 & 1.3e-03 & 1.1e-03 & 2.8e-03 & 2.2e-03 & 2.2e-03 \\
~ KKT & 3.6/9.7 & 1.9/63 & 6.6/89 & 1.7/46 & 1.2/30 & 9.5/68 & 6.8/29 & 4.8/35 & 9.3/88 \\
~ NNZ & 310 & 311 & 312 & 63 & 63 & 63 & 141 & 140 & 142 \\
\midrule[0.75pt]
\multicolumn{1}{l}{} & \multicolumn{9}{l}{\makebox[20em][l]{\textbf{Hinge Loss:}}\textbf{Extremile}} \\
\midrule[0.5pt]
~ Time & 3.67/196.2 & 3.20/123.9 & 5.20/108.5 & 4.25/301.6 & 3.39/463.2 & 11.25/175.0 & 7.68/656.2 & 10.59/319.0 & 9.00/359.9 \\
~ Iter & 7(81)/4049 & 9(72)/2597 & 16(98)/2377 & 8(49)/8273 & 9(51)/13641 & 20(95)/4577 & 8(70)/6865 & 11(83)/3433 & 15(94)/3697 \\
~ Diff & 4.1e-04 & 4.1e-04 & 4.0e-04 & 2.9e-03 & 1.0e-03 & 1.7e-03 & 2.2e-03 & 4.0e-03 & 2.1e-03 \\
~ KKT & 3.6/15 & 1.5/30 & 6.1/99 & 1.5/21 & 9.9/14 & 8.8/66 & 7.6/26 & 4.0/68 & 9.4/74 \\
~ NNZ & 310 & 311 & 311 & 63 & 63 & 63 & 141 & 140 & 142 \\
\midrule[0.75pt]
\multicolumn{1}{l}{} & \multicolumn{9}{l}{\makebox[20em][l]{\textbf{Hinge Loss:}}\textbf{Superquantile}} \\
\midrule[0.5pt]
~ Time & 4.59/195.9 & 3.12/207.8 & 4.64/134.0 & 3.88/155.5 & 4.72/460.5 & 5.92/539.9 & 5.48/490.1 & 7.30/529.0 & 9.81/438.4 \\
~ Iter & 9(103)/4269 & 7(64)/4181 & 13(98)/2773 & 7(47)/4181 & 9(51)/11309 & 15(68)/13861 & 8(68)/5589 & 9(74)/5985 & 15(96)/4577 \\
~ Diff & 5.9e-04 & 2.3e-04 & 4.3e-04 & 7.8e-03 & 2.0e-03 & 4.2e-04 & 3.9e-03 & 2.3e-03 & 1.9e-03 \\
~ KKT & 7.4/19 & 6.1/15 & 8.1/69 & 7.5/48 & 6.9/19 & 0.6/17 & 2.3/30 & 9.3/34 & 6.4/81 \\
~ NNZ & 290 & 309 & 312 & 63 & 63 & 63 & 140 & 142 & 140 \\
\bottomrule[1.5pt]
\end{tabular}
}
\end{table}

To evaluate performance in regimes with high dimensions and small sample sizes ($d \gg n$), we test three real datasets.
These are Dexter (UCI\footnote{\url{https://archive.ics.uci.edu/datasets}}), LiverC and BreastC (CuMiDa \cite{cumida:2019}\footnote{\url{https://sbcb.inf.ufrgs.br/cumida}}).

Table~\ref{tab:real-data-comparison} summarizes the results across three $\lambda$ values for each loss function.
Empirically, the \textnormal{\textsc{ripALM}}-\textnormal{\textsc{Ssn}} method demonstrates a clear computational advantage, converging within $10$ seconds in almost all instances, whereas ADMM's runtime degrades severely on datasets with tens of thousands of features. Moreover, the strictly positive ``Diff'' values confirm that the \textnormal{\textsc{ripALM}}-\textnormal{\textsc{Ssn}} method consistently achieves slightly lower (better) objective values than ADMM, while maintaining comparable or superior KKT residuals. Finally, the ``NNZ'' metric verifies that the proposed method reliably extracts highly sparse solutions; the mild sparsity variations under the hinge loss align with established $\ell_1$-regularization traits \cite{moore2011l1}. Collectively, these outcomes indicate favorable computational performance on high-dimensional instances.

\subsection{Adaptive Sieving Acceleration for Solution Paths}

For the solution-path experiments, we embed the \textnormal{\textsc{ripALM}}-\textnormal{\textsc{Ssn}} method in the adaptive sieving (AS) framework of~\cite{wu2023convex}. For each regularization parameter, AS solves the restricted problem on the current active feature set, lifts the restricted solution to the full space, and evaluates the full-space proximal KKT residual. Variables whose residuals exceed the screening tolerance are added to the active set, and this process repeats until the screening test is satisfied; the final solution is then checked by the stopping criteria in Section~\ref{sec:experimental-setup}. We set the screening tolerance to \(\varepsilon=10^{-6}\) and use the same termination criteria for each restricted solve. We compare \texttt{AS} with two full-space variants of \textnormal{\textsc{ripALM}}-\textnormal{\textsc{Ssn}}: \texttt{Warm}, initialized at the solution from the previous \(\lambda\), and \texttt{Cold}, initialized at zero. For Syn1 and Syn2, we average over three independent random seeds. All entries in Table~\ref{tab:as-comparison} report average wall-clock time per \(\lambda\) on the same regularization path for each fixed data set, loss function, and spectral weight.

As shown in Table~\ref{tab:as-comparison}, the AS-equipped \textnormal{\textsc{ripALM}}-\textnormal{\textsc{Ssn}} method
reduces the average solution-path time relative to both \texttt{Warm}
and \texttt{Cold} on all tested instances. The largest observed speedup
over \texttt{Cold} is $68\times$. Similar reductions are obtained across
the three spectral weight choices and the three loss functions, although
the speedup depends on the data set and the loss function. These results
show that the proposed method can exploit the active-set sparsity along
the regularization path.

\section{Conclusion}
We developed the \textnormal{\textsc{ripALM}}-\textnormal{\textsc{Ssn}} method for sparse spectral risk optimization. The method combines an enhanced PAVA algorithm for evaluating the SRM-based proximal mapping with an explicit characterization of the \textit{HS--Jacobian} of the SRM-based proximal mapping, leading to computable Newton systems for the \textnormal{\textsc{Ssn}} method. The numerical results show lower running times than the tested ADMM baseline and indicate that the method can be combined with adaptive sieving for solution-path computations.
Future work will investigate extensions beyond convex monotone losses, including nonconvex losses and nonlinear models.

\begin{table}[tbp]
\centering

\caption{Adaptive sieving for solution paths. Each entry reports average seconds per parameter value as \texttt{AS}/\texttt{Warm}/\texttt{Cold}; the parenthesized factor attached to \texttt{AS} is the speedup relative to \texttt{Cold}.}
\label{tab:as-comparison}
\setlength{\tabcolsep}{4.2pt}
\renewcommand{\arraystretch}{0.86}\footnotesize
\begin{tabular}{@{}l@{\hspace{0.7em}}l@{\:/\:}l@{\:/\:}l@{\hspace{1.2em}}l@{\:/\:}l@{\:/\:}l@{\hspace{1.2em}}l@{\:/\:}l@{\:/\:}l@{}}
\toprule[1.5pt]
\textbf{Dataset} & \multicolumn{3}{l}{\textbf{ESRM}} & \multicolumn{3}{l}{\textbf{Extremile}} & \multicolumn{3}{l}{\textbf{Superquantile}} \\
\midrule[1.0pt]
\multicolumn{10}{c}{\textbf{Logistic Loss}} \\
\midrule[0.5pt]
LiverC (76,36547) & 0.11(29x) & 2.40 & 3.32 & 0.11(37x) & 2.70 & 4.15 & 0.08(37x) & 2.09 & 2.97 \\
BreastC (151,54675) & 2.84(4x) & 7.53 & 12.37 & 3.58(3x) & 6.32 & 10.31 & 0.66(10x) & 4.29 & 6.41 \\
Syn1 (200,25000) & 1.56(12x) & 11.97 & 18.50 & 1.93(10x) & 11.93 & 19.74 & 0.50(15x) & 4.14 & 7.61 \\
Syn2 (500,50000) & 17.33(4x) & 40.11 & 73.99 & 19.98(4x) & 41.98 & 78.23 & 5.75(5x) & 14.37 & 26.78 \\
\midrule[1.0pt]
\multicolumn{10}{c}{\textbf{Hinge Loss}} \\
\midrule[0.5pt]
LiverC (76,36547) & 0.08(68x) & 0.91 & 5.71 & 0.09(66x) & 0.87 & 5.94 & 0.08(64x) & 1.04 & 5.39 \\
BreastC (151,54675) & 0.25(54x) & 1.80 & 13.44 & 0.23(58x) & 1.90 & 13.36 & 0.28(45x) & 1.99 & 12.53 \\
Syn1 (200,25000) & 0.46(10x) & 1.40 & 4.69 & 0.44(11x) & 1.40 & 4.68 & 0.43(11x) & 1.51 & 4.56 \\
Syn2 (500,50000) & 1.77(9x) & 4.66 & 16.39 & 1.72(9x) & 4.76 & 16.26 & 1.71(9x) & 5.04 & 15.82 \\
\midrule[1.0pt]
\multicolumn{10}{c}{\textbf{Smoothed Hinge Loss}} \\
\midrule[0.5pt]
LiverC (76,36547) & 0.09(30x) & 1.41 & 2.81 & 0.08(34x) & 1.41 & 2.78 & 0.07(31x) & 1.18 & 2.08 \\
BreastC (151,54675) & 0.51(19x) & 3.83 & 9.52 & 0.73(14x) & 4.43 & 9.95 & 0.14(44x) & 2.89 & 6.40 \\
Syn1 (200,25000) & 0.31(22x) & 5.10 & 6.78 & 0.31(22x) & 5.16 & 6.88 & 0.20(25x) & 2.56 & 4.89 \\
Syn2 (500,50000) & 1.25(18x) & 22.59 & 22.86 & 1.28(17x) & 22.78 & 21.89 & 0.81(19x) & 8.23 & 15.15 \\
\bottomrule[1.5pt]
\end{tabular}

\end{table}

\appendix
\section{ADMM Framework}
\label{sec:admm}
The ADMM baseline applies ADMM to the two-auxiliary-variable dual formulation in \eqref{eq:dual-problem}.
Moreau's decomposition and the quadratic \(\bu\)-subproblem yield the following explicit ADMM updates:
\begin{equation}\nonumber
\begin{aligned}
    \bs{\xi}_{k+1} & = \Pi_{B^\lambda_{\Vert \cdot\Vert_\infty}}\left(\frac{\bw_k}{\rho_k} -D^\top \bu_k\right), \\[1ex]
    \bs{\zeta}_{k+1} & = \bu_k+ \frac{\bz_k}{\rho_k} - \frac{1}{\rho_k}\prox_{\rho_k f}\left(\rho_k \bu_k+ \bz_k\right), \\[1ex]
    \bu_{k+1} & = (I+DD^\top)^{-1} \left( D\left(\frac{\bw_{k}}{\rho_k} - \bs{\xi}_{k+1}\right) + \bs{\zeta}_{k+1} - \frac{\bz_{k}}{\rho_k} + \frac{\bc}{\rho_k} \right).
\end{aligned}
\end{equation}
The dual variables are updated by
\vspace{-0.5em}
\[
    \bw_{k+1} = \bw_{k} - \rho_k(D^\top\bu_{k+1} + \bs{\xi}_{k+1}), \qquad
    \bz_{k+1} = \bz_k + \rho_k(\bu_{k+1} - \bs{\zeta}_{k+1}).
\]
The ADMM hyperparameters follow the experimental setting in \cite{wu2023convex}.

\bibliographystyle{siamplain}
\bibliography{ref}

@article{robinson1981some,
  title={Some continuity properties of polyhedral multifunctions},
  author={Robinson, Stephen M},
  journal={Math. Programming Stud.},
  volume={14},
  pages={206--214},
  year={1981},
  publisher={Springer}
}

@article{han1997newton,
  title={{Newton} and quasi-{Newton} methods for normal maps with polyhedral sets},
  author={Han, Jiye and Sun, Defeng},
  journal={J. Optim. Theory Appl.},
  volume={94},
  number={3},
  pages={659--676},
  year={1997},
  publisher={Springer}
}

@book{beck2017first,
  title={First-order methods in optimization},
  author={Beck, Amir},
  year={2017},
  publisher={SIAM}
}

@book{rockafellar2009variational,
  title={Variational analysis},
  author={Rockafellar, R Tyrrell and Wets, Roger J-B},
  year={2009},
  publisher={Springer Science \& Business Media}
}

@article{Qi1993semismooth,
author = {Qi, Liqun and Sun, Jie},
title = {A nonsmooth version of {Newton}'s method},
year = {1993},
issue_date = {January   1993},
publisher = {Springer-Verlag},
address = {Berlin, Heidelberg},
volume = {58},
number = {1--3},
issn = {0025-5610},
journal = {Math. Program.},
month = jan,
pages = {353--367},
numpages = {15}
}

@book{clarke1990nonsmooth,
author = {Clarke, Frank H.},
title = {Optimization and Nonsmooth Analysis},
publisher = {Society for Industrial and Applied Mathematics},
year = {1990}
}

@article{gowda2004inverse,
  title={Inverse and implicit function theorems for {H}-differentiable and semismooth functions},
  author={Gowda, M Seetharama},
  journal={Optim. Methods Softw.},
  volume={19},
  number={5},
  pages={443--461},
  year={2004},
  publisher={Taylor \& Francis}
}

@article{li2018efficiently,
  title={On efficiently solving the subproblems of a level-set method for fused lasso problems},
  author={Li, Xudong and Sun, Defeng and Toh, Kim-Chuan},
  journal={SIAM J. Optim.},
  volume={28},
  number={2},
  pages={1842--1866},
  year={2018},
  publisher={SIAM}
}

@book{facchinei2003finite,
  title={Finite-dimensional variational inequalities and complementarity problems},
  author={Facchinei, Francisco and Pang, Jong-Shi},
  year={2003},
  publisher={Springer}
}

@article{chow2015risk,
  title={Risk-sensitive and robust decision-making: a {CVaR} optimization approach},
  author={Chow, Yinlam and Tamar, Aviv and Mannor, Shie and Pavone, Marco},
  journal={Adv. Neural Inf. Process. Syst.},
  volume={28},
  year={2015}
}

@inproceedings{dwork2012fairness,
  title={Fairness through awareness},
  author={Dwork, Cynthia and Hardt, Moritz and Pitassi, Toniann and Reingold, Omer and Zemel, Richard},
  booktitle={Proceedings of the 3rd Innovations in Theoretical Computer Science Conference},
  pages={214--226},
  year={2012}
}

@misc{acerbi2002portfolio,
  title={Portfolio optimization with spectral measures of risk},
  author={Acerbi, Carlo and Simonetti, Prospero},
  howpublished={arXiv preprint cond-mat/0203607},
  year={2002}
}

@article{acerbi2002spectral,
  title={Spectral measures of risk: A coherent representation of subjective risk aversion},
  author={Acerbi, Carlo},
  journal={J. Bank. Finance},
  volume={26},
  number={7},
  pages={1505--1518},
  year={2002},
  publisher={Elsevier}
}

@inproceedings{maurer2021robust,
  title={Robust unsupervised learning via {L}-statistic minimization},
  author={Maurer, Andreas and Parletta, Daniela Angela and Paudice, Andrea and Pontil, Massimiliano},
  booktitle={International Conference on Machine Learning},
  pages={7524--7533},
  year={2021},
  organization={PMLR}
}

@article{daouia2019extremiles,
  title={Extremiles: A new perspective on asymmetric least squares},
  author={Daouia, Abdelaati and Gijbels, Ir{\`e}ne and Stupfler, Gilles},
  journal={J. Amer. Statist. Assoc.},
  volume={114},
  number={527},
  pages={1366--1381},
  year={2019},
  publisher={Taylor \& Francis}
}

@article{rockafellar2000optimization,
  title={Optimization of conditional value-at-risk},
  author={Rockafellar, R Tyrrell and Uryasev, Stanislav},
  journal={J. Risk},
  volume={2},
  pages={21--42},
  year={2000}
}

@article{wu2023convex,
  title={Convex and nonconvex risk-based linear regression at scale},
  author={Wu, Can and Cui, Ying and Li, Donghui and Sun, Defeng},
  journal={INFORMS J. Comput.},
  volume={35},
  number={4},
  pages={797--816},
  year={2023},
  publisher={INFORMS}
}

@article{cui2025decision,
  title={Decision making under cumulative prospect theory: An alternating direction method of multipliers},
  author={Cui, Xiangyu and Jiang, Rujun and Shi, Yun and Xiao, Rufeng and Yan, Yifan},
  journal={INFORMS J. Comput.},
  volume={37},
  number={4},
  pages={856--873},
  year={2025},
  publisher={INFORMS}
}

@article{xiao2023unified,
  title={A unified framework for rank-based loss minimization},
  author={Xiao, Rufeng and Ge, Yuze and Jiang, Rujun and Yan, Yifan},
  journal={Adv. Neural Inf. Process. Syst.},
  volume={36},
  pages={51302--51326},
  year={2023}
}

@article{best2000minimizing,
  title={Minimizing separable convex functions subject to simple chain constraints},
  author={Best, Michael J and Chakravarti, Nilotpal and Ubhaya, Vasant A},
  journal={SIAM J. Optim.},
  volume={10},
  number={3},
  pages={658--672},
  year={2000},
  publisher={SIAM}
}

@inproceedings{mehta2023stochastic,
  title={Stochastic optimization for spectral risk measures},
  author={Mehta, Ronak and Roulet, Vincent and Pillutla, Krishna and Liu, Lang and Harchaoui, Zaid},
  booktitle={International Conference on Artificial Intelligence and Statistics},
  pages={10112--10159},
  year={2023},
  organization={PMLR}
}

@inproceedings{
ge2025sorel,
title={{SOREL}: A Stochastic Algorithm for Spectral Risks Minimization},
author={Yuze Ge and Rujun Jiang},
booktitle={The Thirteenth International Conference on Learning Representations},
year={2025}
}

@misc{zhu2025ripalm,
      title={{ripALM}: A relative-type inexact proximal augmented {Lagrangian} method for linearly constrained convex optimization}, 
      author={Jiayi Zhu and Ling Liang and Lei Yang and Kim-Chuan Toh},
      howpublished={arXiv preprint arXiv:2411.13267},
      year={2025},
}

@article{li2020asymptotically,
  title={An asymptotically superlinearly convergent semismooth {Newton} augmented {Lagrangian} method for linear programming},
  author={Li, Xudong and Sun, Defeng and Toh, Kim-Chuan},
  journal={SIAM J. Optim.},
  volume={30},
  number={3},
  pages={2410--2440},
  year={2020},
  publisher={SIAM}
}

@article{yang2024corrected,
  title={A corrected inexact proximal augmented {Lagrangian} method with a relative error criterion for a class of group-quadratic regularized optimal transport problems},
  author={Yang, Lei and Liang, Ling and Chu, Hong TM and Toh, Kim-Chuan},
  journal={J. Sci. Comput.},
  volume={99},
  number={3},
  pages={79},
  year={2024},
  publisher={Springer}
}

@book{nocedal2006numerical,
  title={Numerical optimization},
  author={Nocedal, Jorge and Wright, Stephen J},
  year={2006},
  publisher={Springer}
}

@article{jiang1995local,
  title={Local uniqueness and convergence of iterative methods for nonsmooth variational inequalities},
  author={Jiang, Houyuan Y and Qi, LQ},
  journal={J. Math. Anal. Appl.},
  volume={196},
  number={1},
  pages={314--331},
  year={1995},
  publisher={Elsevier}
}

@article{zhao2010newton,
  title={A {Newton-CG} augmented {Lagrangian} method for semidefinite programming},
  author={Zhao, Xin-Yuan and Sun, Defeng and Toh, Kim-Chuan},
  journal={SIAM J. Optim.},
  volume={20},
  number={4},
  pages={1737--1765},
  year={2010},
  publisher={SIAM}
}

@article{klatte2018approximations,
  title={Approximations and generalized {Newton} methods},
  author={Klatte, Diethard and Kummer, Bernd},
  journal={Math. Program.},
  volume={168},
  pages={673--716},
  year={2018},
  publisher={Springer}
}

@article{bonnans1998sensitivity,
  title={Sensitivity analysis of optimization problems under second order regular constraints},
  author={Bonnans, J Fr{\'e}d{\'e}ric and Cominetti, Roberto and Shapiro, Alexander},
  journal={Math. Oper. Res.},
  volume={23},
  number={4},
  pages={806--831},
  year={1998},
  publisher={INFORMS}
}

@misc{liu2023dual,
  title={Dual {Newton} proximal point algorithm for solution paths of the {$\ell_1$}-regularized logistic regression},
  author={Liu, Yong-Jin and Zhou, Weimi},
  howpublished={arXiv preprint arXiv:2310.19353},
  year={2023}
}

@inproceedings{koh2007method,
  title={A method for large-scale {$\ell_1$}-regularized logistic regression},
  author={Koh, Kwangmoo and Kim, Seung-Jean and Boyd, Stephen},
  booktitle={Proceedings of the Twenty-Second AAAI Conference on Artificial Intelligence},
  pages={565--571},
  year={2007}
}

@article{cumida:2019,
	title        = {{CuMiDa}: An extensively curated microarray database for benchmarking and testing of machine learning approaches in cancer research},
	author       = {Feltes, B.C. and Chandelier, E. B. and Grisci, B. I. and Dorn, M.},
	year         = 2019,
	journal      = {J. Comput. Biol.},
	volume       = 26,
	number       = 4,
	pages        = {376--386}
}

@book{rockafellar1970,
  author    = {Rockafellar, R. Tyrrell},
  title     = {Convex Analysis},
  year      = {1970},
  publisher = {Princeton University Press}
}

@inproceedings{moore2011l1,
  title={{$\ell_1$} and {$\ell_2$} regularization for multiclass hinge loss models},
  author={Moore, Robert and DeNero, John},
  booktitle={MLSLP},
  pages={1--5},
  year={2011}
}

@article{cui2018portfolio,
  title={Portfolio optimization with nonparametric value at risk: A block coordinate descent method},
  author={Cui, Xueting and Sun, Xiaoling and Zhu, Shushang and Jiang, Rujun and Li, Duan},
  journal={INFORMS J. Comput.},
  volume={30},
  number={3},
  pages={454--471},
  year={2018},
  publisher={INFORMS}
}
\end{document}